\documentclass[oneside,english]{amsart}
\usepackage{lmodern}
\usepackage[T1]{fontenc}
\usepackage[latin9]{inputenc}
\usepackage{verbatim}
\usepackage{amsthm}
\usepackage{amsbsy}
\usepackage{amssymb}
\usepackage{xargs}[2008/03/08]
\PassOptionsToPackage{normalem}{ulem}
\usepackage{ulem}

\makeatletter
\numberwithin{equation}{section}
\numberwithin{figure}{section}
\theoremstyle{plain}
\newtheorem{thm}{\protect\theoremname}[section]
  \theoremstyle{definition}
  \newtheorem{defn}[thm]{\protect\definitionname}
  \theoremstyle{remark}
  \newtheorem{acknowledgement}[thm]{\protect\acknowledgementname}
  \theoremstyle{plain}
  \newtheorem{lem}[thm]{\protect\lemmaname}
  \theoremstyle{plain}
  \newtheorem{cor}[thm]{\protect\corollaryname}
  \theoremstyle{remark}
  \newtheorem{rem}[thm]{\protect\remarkname}
  \theoremstyle{plain}
  \newtheorem{prop}[thm]{\protect\propositionname}
  \theoremstyle{plain}
  \newtheorem{fact}[thm]{\protect\factname}
  \theoremstyle{definition}
  \newtheorem{example}[thm]{\protect\examplename}
  \theoremstyle{remark}
  \newtheorem{claim}[thm]{\protect\claimname}
  \theoremstyle{remark}
  \newtheorem{notation}[thm]{\protect\notationname}
  \theoremstyle{definition}
  \newtheorem{problem}[thm]{\protect\problemname}

\usepackage{amsmath}


\usepackage{a4wide}
\usepackage{url}
\linespread{1.5}
\newcommand{\nice}{good}

\makeatother

\usepackage{babel}
  \providecommand{\acknowledgementname}{Acknowledgement}
  \providecommand{\claimname}{Claim}
  \providecommand{\corollaryname}{Corollary}
  \providecommand{\definitionname}{Definition}
  \providecommand{\examplename}{Example}
  \providecommand{\factname}{Fact}
  \providecommand{\lemmaname}{Lemma}
  \providecommand{\notationname}{Notation}
  \providecommand{\problemname}{Problem}
  \providecommand{\propositionname}{Proposition}
  \providecommand{\remarkname}{Remark}
\providecommand{\theoremname}{Theorem}

\begin{document}
\global\long\def\p{\mathbf{p}}
\global\long\def\q{\mathbf{q}}
\global\long\def\C{\mathfrak{C}}
\global\long\def\SS{\mathcal{P}}
 \global\long\def\concat{\frown}
\global\long\def\cl{\operatorname{cl}}
\global\long\def\tp{\operatorname{tp}}
\global\long\def\id{\operatorname{id}}
\global\long\def\cons{\left(\star\right)}
\global\long\def\qf{\operatorname{qf}}
\global\long\def\ai{\operatorname{ai}}
\global\long\def\dtp{\operatorname{dtp}}
\global\long\def\acl{\operatorname{acl}}
\global\long\def\nb{\operatorname{nb}}
\global\long\def\limb{{\lim}}
\global\long\def\leftexp#1#2{{\vphantom{#2}}^{#1}{#2}}
\global\long\def\intr{\operatorname{interval}}
\global\long\def\atom{\emph{at}}
\global\long\def\I{\mathfrak{I}}
\global\long\def\uf{\operatorname{uf}}
\global\long\def\ded{\operatorname{ded}}
\global\long\def\Ded{\operatorname{Ded}}
\global\long\def\Df{\operatorname{Df}}
\global\long\def\Th{\operatorname{Th}}
\global\long\def\eq{\operatorname{eq}}
\global\long\def\Aut{\operatorname{Aut}}
\global\long\def\ac{ac}
\global\long\def\DfOne{\operatorname{df}_{\operatorname{iso}}}
\global\long\def\modp#1{\pmod#1}
\global\long\def\sequence#1#2{\left\langle #1\,\middle|\,#2\right\rangle }
\global\long\def\set#1#2{\left\{  #1\,\middle|\,#2\right\}  }
\global\long\def\Diag{\operatorname{Diag}}
\global\long\def\Nn{\mathbb{N}}
\global\long\def\mathrela#1{\mathrel{#1}}
\global\long\def\twiddle{\mathord{\sim}}
\global\long\def\mathordi#1{\mathord{#1}}
\global\long\def\Qq{\mathbb{Q}}
\global\long\def\dense{\operatorname{dense}}
 \global\long\def\cof{\operatorname{cof}}
\global\long\def\otp{\operatorname{otp}}
\global\long\def\treeexp#1#2{#1^{\left\langle #2\right\rangle _{\tr}}}
\global\long\def\x{\times}
\global\long\def\forces{\Vdash}
\global\long\def\Vv{\mathbb{V}}
\global\long\def\Uu{\mathbb{U}}
\global\long\def\tauname{\dot{\tau}}
\global\long\def\ScottPsi{\Psi}
\global\long\def\cont{2^{\aleph_{0}}}
\newcommandx\MA[2][usedefault, addprefix=\global, 1=]{{MA#1}_{#2}}
\global\long\def\rank#1#2{R_{#1}\left(#2\right)}

\def\Ind#1#2{#1\setbox0=\hbox{$#1x$}\kern\wd0\hbox to 0pt{\hss$#1\mid$\hss} \lower.9\ht0\hbox to 0pt{\hss$#1\smile$\hss}\kern\wd0} 
\def\Notind#1#2{#1\setbox0=\hbox{$#1x$}\kern\wd0\hbox to 0pt{\mathchardef \nn="3236\hss$#1\nn$\kern1.4\wd0\hss}\hbox to 0pt{\hss$#1\mid$\hss}\lower.9\ht0 \hbox to 0pt{\hss$#1\smile$\hss}\kern\wd0} 
\def\nind{\mathop{\mathpalette\Notind{}}}

\global\long\def\ind{\mathop{\mathpalette\Ind{}}}
 \global\long\def\nind{\mathop{\mathpalette\Notind{}}}
\global\long\def\weakLin#1{{\bf K}_{#1}}
 \global\long\def\fullLin#1{{\bf K}_{#1}^{+}}
\global\long\def\interLin#1{{\bf K}_{#1}^{*}}
\global\long\def\collaps#1#2{coll\left(#1,#2\right)}
\global\long\def\eqT#1#2{\sim_{T}^{#1,#2}}
\global\long\def\cal#1{\mathcal{#1}}

\title{Forcing a countable structure to belong to the ground model}

\author{Itay Kaplan and Saharon Shelah}

\thanks{This research was partially supported by the ISRAEL SCIENCE FOUNDATION
(grant No. 1533/14).}

\thanks{The research leading to these results has received funding from the
European Research Council, ERC Grant Agreement n. 338821. No. 1054
on the second author's list of publications. }

\address{Itay Kaplan \\
The Hebrew University of Jerusalem\\
Einstein Institute of Mathematics \\
Edmond J. Safra Campus, Givat Ram\\
Jerusalem 91904, Israel}

\email{kaplan@math.huji.ac.il}

\urladdr{https://sites.google.com/site/itay80/ }

\address{Saharon Shelah\\
The Hebrew University of Jerusalem\\
Einstein Institute of Mathematics \\
Edmond J. Safra Campus, Givat Ram\\
Jerusalem 91904, Israel}

\address{Saharon Shelah \\
Department of Mathematics\\
Hill Center-Busch Campus\\
Rutgers, The State University of New Jersey\\
110 Frelinghuysen Road\\
Piscataway, NJ 08854-8019 USA}

\email{shelah@math.huji.ac.il}

\urladdr{http://shelah.logic.at/}

\subjclass[2010]{03C95, 03C55, 03C45.}
\begin{abstract}
Suppose that $P$ is a forcing notion, $L$ is a language (in $\Vv$),
$\dot{\tau}$ a $P$-name such that $P\forces$ ``$\dot{\tau}$ is
a countable $L$-structure''. In the product $P\times P$, there
are names $\dot{\tau_{1}},\dot{\tau_{2}}$ such that for any generic
filter $G=G_{1}\x G_{2}$ over $P\x P$, $\dot{\tau}_{1}\left[G\right]=\dot{\tau}\left[G_{1}\right]$
and $\dot{\tau}_{2}\left[G\right]=\dot{\tau}\left[G_{2}\right]$.
Zapletal asked whether or not $P\x P\forces\dot{\tau}_{1}\cong\dot{\tau}_{2}$
implies that there is some $M\in\Vv$ such that $P\forces\dot{\tau}\cong\check{M}$.
We answer this question negatively and discuss related issues%
\thanks{After this paper was published online, it came to our attention, thanks
to Antonio Montalban, that another paper released recently \cite{Knight2014}
has some similar results. See Remark \ref{rem:AfterRelease} for more. %
}. 
\end{abstract}

\maketitle

\section{\label{sec:The main question}The isomorphism property}

Let us start with describing the motivating question (asked by Zapletal)
for this paper. 

Let $P_{1},P_{2}$ be forcing notions, and let $L$ be a language
(vocabulary) such that both $P_{1}$ and $P_{2}$ force $L$ to be
countable. Suppose that $\tauname_{1}$, $\tauname_{2}$ are, respectively,
$P_{1}$ and $P_{2}$ names for countable $L$ structures whose universe,
we may assume, is $\omega$. We also fix our universe $\Vv$. 

Let $\tauname'_{1}$ be a name in the forcing notion $P_{1}\times P_{2}$
such that for any generic filter $G=G_{1}\x G_{2}$ for $P_{1}\x P_{2}$,
$\tauname_{1}'\left[G\right]=\tauname_{1}\left[G_{1}\right]$, and
similarly define $\tauname_{2}'$. 
\begin{itemize}
\item [($\star$)]Suppose that $P_{1}\x P_{2}\forces\tauname'_{1}\cong\tauname_{2}'$.
Does it follow that for some $L$-structure $M\in\Vv$, $P_{1}\forces\tauname_{1}\cong\check{M}$? 
\end{itemize}
First we make a few remarks.

$\ast$ Note that even if $\tauname_{1}$ is forced to be a finite
structure, it is not immediate that the answer is ``yes''. Here's
an example where we can force a new structure with finite universe.
Let $L=\set{P_{i}}{i<\omega}$ where $P_{i}$ are unary predicates.
Let $P$ be the Cohen forcing adding one new real $\varepsilon\in\Vv^{P}$.
Then in $\Vv^{P}$, we can define the structure $N$ whose universe
is $\left\{ 0\right\} $, and such that $P_{i}^{N}=\emptyset$ iff
$\varepsilon\left(i\right)=0$. However, it turns out that in this
case the answer is ``yes'' in our situation by Remark \ref{rem:why inifinte}
below, so for simplicity we will focus on infinite structures. 

$\ast$ Note also that if the answer to ($\star$) is yes, then if
$P_{2}\forces\left|\check{M}\right|=\aleph_{0}$, then also $P_{2}\forces\tauname_{2}\cong\check{M}$.
Indeed, suppose that $G_{2}$ is a $P_{2}$-generic filter over $\Vv$,
and that $G_{1}$ is $P_{1}$-generic over $\Vv\left[G_{2}\right]$.
Then $G_{1}\x G_{2}$ is $P_{1}\x P_{2}$-generic (see \cite[Lemma 15.9]{JechSetTheory})
over $\Vv$, so $\Vv\left[G_{1}\x G_{2}\right]\models\tauname_{2}\cong\check{M}$.
But in $\Vv\left[G_{2}\right]$, the set of pairs $\left(x,y\right)$
of elements of $\omega^{\omega}$ which code isomorphic $L$-structure
is analytic (i.e., $\boldsymbol{\Sigma}_{1}^{1}$), and analytic properties
are absolute between transitive models of ZFC by Mostowski's absoluteness
(see \cite[Theorem 25.4]{JechSetTheory}), so this is true in $\Vv\left[G_{2}\right]$
as well. Another way to see this is using Scott sentences, see Remark
\ref{rem:Another reason for remark in intro}. 

$\ast$ Finally let us note that we can reduce the question to the
case where $P_{1}=P_{2}$. Consider $P_{1}\x P_{2}\x P_{1}$, and,
abusing notation, let $\tauname_{1}',\tauname_{2}'$ be $P_{1}\x P_{2}\x P_{1}$-names
as above, and let $\tauname_{1}''$ be a $P_{1}\x P_{2}\x P_{1}$-name
such that for any generic filter $G=G_{1}\x G_{2}\x G_{3}$ for $P_{1}\x P_{2}\x P_{1}$,
$\tauname_{1}''\left[G\right]=\tauname_{1}\left[G_{3}\right]$. Then
$P_{1}\x P_{2}\x P_{1}\forces\tauname_{1}'\cong\tauname_{1}''$ (because
for any such generic filter $G$, $G_{1}\x G_{2}$ is $P_{1}\x P_{2}$-generic
and $G_{2}\x G_{3}$ is $P_{2}\x P_{1}$-generic). So for any generic
filter $G=G_{1}\x G_{2}\x G_{3}$ for $P_{1}\x P_{2}\x P_{1}$, in
$\Vv\left[G\right]$, $\tauname_{1}\left[G_{1}\right]\cong\tauname_{1}\left[G_{3}\right]$,
and by Mostowski's absoluteness (see above), the same is true in $\Vv\left[G_{1}\x G_{3}\right]$. 

In order to simplify the discussion, let us introduce the following
definition.
\begin{defn}
\label{def:Isomorphism property}Suppose $P$ is a forcing notion,
$L$ a language such that $P\forces\left|\check{L}\right|\leq\aleph_{0}$,
and $\tauname$ is a $P$-name for an infinite $L$-structure with
universe $\omega$ which satisfies: 
\begin{itemize}
\item $P\x P\forces\tauname_{1}\cong\tauname_{2}$ where $\tauname_{1}$
is a $P\x P$-names such that whenever $G=G_{1}\x G_{2}$ is generic
for $P\x P$, $\tauname_{1}\left[G\right]=\tauname\left[G\right]$
and $\tauname_{2}$ is defined similarly. 
\end{itemize}
Then we say that $\left(P,L,\tauname\right)$ has the \emph{isomorphism
property}. 
\end{defn}
In light of the remarks above, we will focus on the following question.
\begin{itemize}
\item [($\star '$)]Suppose that $\left(P,L,\tauname\right)$ has the isomorphism
property. Does it follow that for some $L$-structure $M\in\Vv$,
$P\forces\tauname\cong\check{M}$?
\end{itemize}
Let us say that a forcing notion $P$ is\emph{ \nice} if the answer
to ($\star'$) is ``yes'' for every $L$ and $\tauname$. In Corollary
\ref{cor:negative answer} we give a more natural forcing theoretic
description of such forcing notions: $P$ is {\nice} iff $P$ does
not collapse $\aleph_{2}$ to $\aleph_{0}$.

Section \ref{sec:Preliminaries} consists of three sub-sections, which
mainly serve to recall classical results and to motivate the proceeding
sections. In Section \ref{sub:product forcing} we give the necessary
facts about product of forcing notions. Section \ref{sub:Scott-sentence}
discusses Scott sentences. Section \ref{sub:scott->theory} translates
the Scott sentence of a structure $M$ to a first order theory $T$
such that $M$ is an atomic model of $T$. 

Section \ref{sec:Translating-the-question} translates ($\star'$)
to a question on the existence of atomic models. Section \ref{sec:existence of atomic models}
finally answers ($\star'$) negatively in full generality, but positively
in some cases (e.g., when $P$ does not collapse $\aleph_{2}$ to
$\aleph_{0}$). In addition we investigate when atomic models exist
under classification theory assumptions. In particular, we prove that
if $T$ is a superstable theory and $A$ a subset of a model of $T$
such that the isolated types over $A$ are dense, then: if $\MA{\kappa}$
holds, and $\left|A\right|\leq\kappa^{+}$, then there is an atomic
model over $A$ (this is Theorem \ref{thm:atomic model MA superstable}).
Without Martin's Axiom, this is not true (Example \ref{exa:superstable-continuum plus}). 

Section \ref{sec:On-linear-orders} deals with linear orders. Namely,
we analyze the situation when $\tauname$ is forced to be a linear
order. We do not reach a definite conclusion but we get some equivalent
formulation of the problem. 
\begin{acknowledgement}
We would like to thank the anonymous referee for his very thorough
report and for his useful suggestions. 
\end{acknowledgement}

\section{\label{sec:Preliminaries}Preliminaries}

\subsection{\label{sub:product forcing}Some remarks on product forcing}

The following lemma is easy, and probably well known.
\begin{lem}
\label{lem:everything in V}Suppose $\Uu_{1}$ and $\Uu_{2}$ are
both transitive models of ZFC such that $\Uu_{1}\subseteq\Uu_{2}$.
Suppose that $P\in\Uu_{1}$ is a forcing notion, $\tauname\in\Uu_{1}$
is a $P$-name, and $x\in\Uu_{2}$. Then, if $p\forces\tauname=\check{x}$
(in $\Uu_{2}$) for some $p\in P$, then $x\in\Uu_{1}$. \end{lem}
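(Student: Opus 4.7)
The plan is to prove the lemma by induction on the rank of $\tauname$ as a $P$-name (computed inside $\Uu_{1}$). The key ingredient is the \emph{absoluteness of the forcing relation}: for any fixed formula $\varphi(v_{1},\dots,v_{n})$ in the language of set theory, the relation $r\forces\varphi(\sigma_{1},\dots,\sigma_{n})$ is defined by a transfinite recursion on the ranks of $\sigma_{1},\dots,\sigma_{n}$ whose defining clauses are absolute between transitive models of ZFC. Consequently, for $P$ and $\sigma_{1},\dots,\sigma_{n}$ in $\Uu_{1}$ the statements $r\forces^{\Uu_{1}}\varphi$ and $r\forces^{\Uu_{2}}\varphi$ agree.

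Assume the lemma for every $P$-name in $\Uu_{1}$ of rank strictly less than $\tauname$, and suppose that in $\Uu_{2}$ one has $p\forces\tauname=\check{x}$ for some $x\in\Uu_{2}$. The goal is to realize $x$ as a set definable inside $\Uu_{1}$. Working in $\Uu_{1}$, set
\[
S=\set{y\in\Uu_{1}}{\exists(\sigma,q)\in\tauname\ \exists r\in P\bigl(r\le p\wedge r\le q\wedge r\forces\sigma=\check{y}\bigr)},
\]
with $\forces$ interpreted in $\Uu_{1}$. Since $P,\tauname,p\in\Uu_{1}$ we have $S\in\Uu_{1}$, so it suffices to show that $S=x$.

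For $S\subseteq x$: given $y\in S$ with witnesses $(\sigma,q)$ and $r$, absoluteness gives $r\forces^{\Uu_{2}}\sigma=\check{y}$; moreover $r\forces\sigma\in\tauname$ (from $(\sigma,q)\in\tauname$ and $r\le q$) and $r\forces\tauname=\check{x}$ (from $r\le p$), hence $r\forces\check{y}=\sigma\in\check{x}$, which forces $y\in x$, so $y\in x$. For $x\subseteq S$: for $y\in x$ we have $p\forces\check{y}\in\tauname$ in $\Uu_{2}$, so by the recursive definition of forcing for $\in$ there exist $(\sigma,q)\in\tauname$ and $r\le p,q$ with $r\forces\sigma=\check{y}$ in $\Uu_{2}$; the rank of $\sigma$ is strictly smaller than that of $\tauname$, so the induction hypothesis applied to the name $\sigma$, the condition $r$, and the element $y\in\Uu_{2}$ yields $y\in\Uu_{1}$, and absoluteness then transfers $r\forces\sigma=\check{y}$ into $\Uu_{1}$, placing $y\in S$.

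I do not anticipate a substantive obstacle: the only non-formal ingredient is the standard absoluteness of the forcing relation between transitive models of ZFC, and the rest is a rank induction that bootstraps the elements of $x$ into $\Uu_{1}$ one rank at a time and then collects them into a $\Uu_{1}$-definable set.
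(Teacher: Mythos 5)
Your proof is correct and follows essentially the same route as the paper's: a rank induction that recovers each $y\in x$ from a sub-name of $\tauname$ forced to equal $\check{y}$, and then collects these elements into a set definable in $\Uu_{1}$. The only differences are cosmetic --- you induct on the rank of the name $\tauname$ rather than on the rank of $x$ in $\Uu_{2}$, and you are more explicit about the absoluteness of the (atomic) forcing relation and about why your collection $S$ is actually a set in $\Uu_{1}$, a point the paper handles tersely via ``specification''.
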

\begin{proof}
We prove the lemma for any $P$, $p$, $\tauname$ and $x$ by induction
on $\beta$ --- the rank of $x$ ($\beta$ is the the smallest ordinal
$\alpha$ such that $x\in\Uu_{2,\alpha+1}$). For $\beta=0$ it is
obvious. For $\beta>0$, since $p\forces\tauname=\check{x}$, then
for every $y\in x$, for some (actually for densely many) $q\in P$
stronger than $p$, there is some name $\tauname'\in\Uu_{1}$ ($\tauname'$
is a member of some pair --- condition; name --- in $\tauname$),
such that $q\forces\tauname'=\check{y}$. By induction, $y\in\Uu_{1}$.
So $x$ is the set of $y$'s in $\Uu_{1}$ such that $p\forces\check{y}\in\tauname$,
and hence by specification, $x\in\Uu_{1}$. \end{proof}
\begin{cor}
\label{cor:product equal}Suppose $P_{1},P_{2}$ are forcing notions
and that $\tauname_{1},\tauname_{2}$ are $P_{1}$ and $P_{2}$-names
respectively. As usual we let $\tauname_{1}'$ and $\tauname_{2}'$
be the corresponding names in the product. Then if $\left(p,q\right)\forces\tauname_{1}'=\tauname_{2}'$
for some $\left(p,q\right)\in P_{1}\x P_{2}$, then for some $x\in\Vv$,
$p\forces\tauname_{1}=\check{x}$ and $q\forces\tauname_{2}=\check{x}$. \end{cor}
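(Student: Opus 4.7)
The plan is to pick a $P_{1}\x P_{2}$-generic filter $G=G_{1}\x G_{2}$ over $\Vv$ with $\left(p,q\right)\in G$, set $x:=\tauname_{1}\left[G_{1}\right]$, and first verify that $x\in\Vv$ and then that $p$ (respectively $q$) already forces $\tauname_{1}=\check{x}$ (respectively $\tauname_{2}=\check{x}$).

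Because $(p,q)\forces\tauname_{1}'=\tauname_{2}'$, we have $x=\tauname_{2}\left[G_{2}\right]\in\Vv\left[G_{2}\right]$. Now $G_{1}$ is $P_{1}$-generic over $\Vv\left[G_{2}\right]$ by the standard product-forcing fact already quoted in the introduction, so the forcing theorem applied inside $\Vv\left[G_{2}\right]$ yields some $p'\in G_{1}$ extending $p$ with $p'\forces_{P_{1},\Vv\left[G_{2}\right]}\tauname_{1}=\check{x}$. Invoking Lemma \ref{lem:everything in V} with $\Uu_{1}:=\Vv$, $\Uu_{2}:=\Vv\left[G_{2}\right]$, forcing notion $P_{1}$, name $\tauname_{1}$ and condition $p'$ immediately yields $x\in\Vv$.

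Next I want to strengthen ``some $p'\leq p$ forces $\tauname_{1}=\check{x}$ over $\Vv\left[G_{2}\right]$'' to ``$p$ itself forces $\tauname_{1}=\check{x}$ over $\Vv$''. Let $H$ be any $P_{1}$-generic over $\Vv\left[G_{2}\right]$ containing $p$; then $H\x G_{2}$ is $P_{1}\x P_{2}$-generic over $\Vv$ and contains $(p,q)$, so $\tauname_{1}\left[H\right]=\tauname_{2}\left[G_{2}\right]=x$ in $\Vv\left[H\x G_{2}\right]$. Hence $p\forces_{P_{1},\Vv\left[G_{2}\right]}\tauname_{1}=\check{x}$, and since $P_{1}$, $\tauname_{1}$ and $\check{x}$ all live in $\Vv$, absoluteness of the syntactic forcing relation $\forces^{*}$ between $\Vv$ and $\Vv\left[G_{2}\right]$ gives $p\forces_{P_{1},\Vv}\tauname_{1}=\check{x}$. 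A completely symmetric argument --- fixing the generic $G_{1}$ and letting a $P_{2}$-generic over $\Vv\left[G_{1}\right]$ vary --- gives $q\forces_{P_{2},\Vv}\tauname_{2}=\check{x}$.

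The mildly delicate point is the last step: one must first secure $x\in\Vv$ before one can even speak of $\check{x}$ as a $P_{1}$-name in $\Vv$ and then push the forcing statement down from $\Vv\left[G_{2}\right]$ to $\Vv$. That is precisely the role of Lemma \ref{lem:everything in V}; once it has been applied, the product-forcing trick (using genericity of an arbitrary $H$ over $\Vv\left[G_{2}\right]$) together with absoluteness of $\forces^{*}$ does the rest.
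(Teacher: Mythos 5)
Your proof is correct and follows essentially the same route as the paper's: both arguments combine the mutual-genericity product lemma with Lemma \ref{lem:everything in V} to place $x=\tauname_{1}\left[G_{1}\right]$ in $\Vv$, and then push the forcing statement down to the ground model. The only (cosmetic) difference is that the paper observes directly that the full condition $q$ forces $\tauname_{2}=\check{x}$ over $\Vv\left[G_{1}\right]$ before invoking the lemma, whereas you first pass through an auxiliary condition $p'\in G_{1}$ and only afterwards strengthen back to $p$ itself.
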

\begin{proof}
Let $G_{1}$ be a generic filter for $P_{1}$ over $\Vv$ containing
$p$. Let $\Uu_{1}=\Vv$, $\Uu_{2}=\Vv\left[G_{1}\right]$, $P=P_{2}$
, $\tauname=\tauname_{2}$ and $x=\tauname_{1}\left[G_{1}\right]$.
So over $\Uu_{2}$, $q\forces\tauname=\check{x}$, and hence by Lemma
\ref{lem:everything in V}, $x\in\Vv$. So $q\forces\tauname_{2}=\check{x}$.
Similarly, for some $x'\in\Vv$, $p\forces\tauname_{1}=x'$. Finally,
it must be that $x=x'$.
\end{proof}

\begin{cor}
Assume as above that $P_{1},P_{2}$ are forcing notions and let $G_{1}\x G_{2}$
be a $P_{1}\x P_{2}$-generic filter. Then $\Vv\left[G_{1}\right]\cap\Vv\left[G_{2}\right]=\Vv$.\end{cor}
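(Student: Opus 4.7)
The plan is to reduce the statement directly to the previous Corollary \ref{cor:product equal}. The inclusion $\Vv \subseteq \Vv[G_1]\cap \Vv[G_2]$ is immediate, so the content is to show $\Vv[G_1]\cap \Vv[G_2] \subseteq \Vv$.

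First I would take an arbitrary $x \in \Vv[G_1]\cap \Vv[G_2]$ and pick, in $\Vv$, a $P_1$-name $\tauname_1$ and a $P_2$-name $\tauname_2$ with $\tauname_1[G_1] = x = \tauname_2[G_2]$. Lifting these to $P_1 \times P_2$-names $\tauname_1'$ and $\tauname_2'$ in the canonical way (as in the setup of Corollary \ref{cor:product equal}), one has $\tauname_1'[G_1\times G_2] = \tauname_2'[G_1\times G_2]$, so by the forcing theorem there is some $(p,q)\in G_1 \times G_2$ with $(p,q) \Vdash \tauname_1' = \tauname_2'$.

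Now the previous Corollary \ref{cor:product equal} applies verbatim and yields some $y \in \Vv$ with $p \Vdash \tauname_1 = \check{y}$ and $q \Vdash \tauname_2 = \check{y}$. Since $p \in G_1$, the forcing theorem gives $x = \tauname_1[G_1] = y \in \Vv$, which is what we needed.

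There is essentially no obstacle here: the whole point of Corollary \ref{cor:product equal} was to do the real work (via Lemma \ref{lem:everything in V}, which in turn is a standard rank induction using that a name $\tauname$ evaluated to an element of the smaller model must itself be coded inside that smaller model). The only thing to be careful about is the passage from ``the generic filter forces an equality of names'' to ``some condition in the filter forces it'', which is just the standard forcing theorem and requires no further argument.
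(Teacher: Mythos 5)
Your proposal is correct and follows essentially the same route as the paper's own proof: take names $\dot{\tau}_1,\dot{\tau}_2$ for $x$, pass to the product names, find a condition $(p,q)\in G_1\times G_2$ forcing their equality, and apply Corollary \ref{cor:product equal}. The only (cosmetic) difference is that you are careful to introduce a fresh letter $y$ for the ground-model witness, whereas the paper reuses $x$.
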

\begin{proof}
Suppose that $x$ is in the intersection. Let $\tauname_{1}$ and
$\tauname_{2}$ be $P_{1}$ and $P_{2}$-names such that $\tauname_{1}\left[G_{1}\right]=\tauname_{2}\left[G_{2}\right]$,
and let $\tauname_{1}'$ and $\tauname_{2}'$ be the corresponding
$P_{1}\x P_{2}$-names. Then for some $\left(p,q\right)\in G_{1}\x G_{2}$,
$\left(p,q\right)\forces\tauname_{1}'=\tauname'_{2}$. By Corollary
\ref{cor:product equal}, for some $x\in\Vv$, $p\forces\tauname_{1}=\check{x}$
and $q\forces\tauname_{2}=\check{x}$, so we are done. 
\end{proof}

\subsection{\label{sub:Scott-sentence}Scott sentence}

Recall that for a countable structure $M$ for a countable language,
the \emph{Scott sentence} of $M$ is an $L_{\omega_{1},\omega}$-sentence
$\ScottPsi$ such that whenever $N\models\ScottPsi$ and $\left|N\right|=\aleph_{0}$,
$N\cong M$. 

We need a precise set theoretic definition and coding of $L_{\lambda,\omega}$-formulas
in order to continue. As usually done, we can code formulas and terms
as objects in $\Vv$. In the following paragraph, we do not distinguish
between a formula (or term) and its code.

For instance, a code for a term is a variable $x\in Var$ (where $Var$
is an infinite large enough set of variables) or a tuple $\left(f,t_{1},\ldots,t_{n}\right)$
where $f$ is an $n$-place function from $L$, and $t_{1},\ldots,t_{n}$
are terms. Similarly we define codes for atomic formulas as tuples
$\left(R,t_{1},\ldots,t_{n}\right)$ where $R$ is an $n$-place relation
symbol and $t_{1},\ldots,t_{n}$ are terms. 

We also fix a constant set for the logical symbols $\neg,\bigwedge,\exists$.
The code for the negation of an $L_{\lambda,\omega}$-formula $\neg\varphi$
is the pair $\left(\neg,\varphi\right)$, and similarly the code for
$\exists x\varphi$ is $\left(\exists,x,\varphi\right)$. The code
for $\bigwedge_{i\in I}\psi_{i}$ (where $\psi_{i}$ are formulas,
and $\left|I\right|<\lambda$) is the pair $\left(\bigwedge,\set{\psi_{i}}{i\in I}\right)$.
The connectors $\bigvee$ and $\to$, and the quantifier $\forall$
are treated as abbreviations. 

As usual, $L_{\infty,\omega}$ is the union of $L_{\lambda,\omega}$
running over all $\lambda$. 
\begin{rem}
\label{rem:L_infinity,omega is absoluite}The property of being a
(code for) $L_{\lambda,\omega}$-sentence for some $\lambda$ is absolute.
I.e., if $\Uu_{1}\subseteq\Uu_{2}$ are transitive models of ZFC having
the same ordinals, $x\in\Uu_{1}$, then $\Uu_{1}\models$``$x$ is
an $L_{\infty,\omega}$-formula (sentence)'' iff the same is true
in $\Uu_{2}$. This can be proved by induction on the rank of $x$
in $\Uu_{1}$. 
\end{rem}

The choice for the coding of $\bigwedge_{i\in I}\psi_{i}$ to be a
set rather than a sequence is important as we shall see now: we give
a canonical construction for Scott sentences as in e.g., \cite{KeislerInf}.
\begin{prop}
\label{prop:Scott}Suppose $L$ is a countable language. There is
a (class) function $Sc$ whose domain is the class of all countable
$L$-structures, and whose range is the set of all (codes for) $L_{\omega_{1},\omega}$
sentences such that for all countable $L$-structures $M$, $N$,
$M\cong N$ iff $N\models Sc\left(M\right)$ iff $Sc\left(M\right)=Sc\left(N\right)$. \end{prop}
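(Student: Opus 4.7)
The plan is to run Scott's classical analysis, but being careful at every step to make the assignment $M \mapsto Sc(M)$ literally canonical (not just up to equivalence), using the fact stressed in the preceding paragraph that the conjunction $\bigwedge_{i\in I}\psi_i$ is coded by the \emph{set} $\{\psi_i : i\in I\}$.

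I would define, by transfinite recursion on $\alpha<\omega_1$, for every finite tuple $\bar{a}$ from $M$ an $L_{\omega_1,\omega}$-formula $\varphi^\alpha_{\bar{a}}(\bar{x})$ as follows. For $\alpha=0$ let $\varphi^0_{\bar{a}}(\bar{x})$ be $\bigwedge\{\theta(\bar{x}) : \theta \text{ atomic or negated atomic, } M\models\theta(\bar{a})\}$. At successors set
\[
\varphi^{\alpha+1}_{\bar{a}}(\bar{x}) \; = \; \varphi^{\alpha}_{\bar{a}}(\bar{x}) \wedge \bigwedge_{b\in M} \exists y\,\varphi^{\alpha}_{\bar{a}b}(\bar{x},y) \wedge \forall y\bigvee_{b\in M}\varphi^{\alpha}_{\bar{a}b}(\bar{x},y),
\]
and at limits take $\bigwedge_{\beta<\alpha}\varphi^\beta_{\bar{a}}$. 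Because each $\bigwedge$/$\bigvee$ is coded by a set, the formula $\varphi^\alpha_{\bar{a}}$ depends only on the orbit of $\bar{a}$ under $\Aut(M)$ in a very strong sense: if $f:M\to N$ is an isomorphism then $\varphi^\alpha_{\bar{a}}$ computed in $M$ is the \emph{same} code as $\varphi^\alpha_{f(\bar{a})}$ computed in $N$ (prove this by a joint induction on $\alpha$). In particular, at each $\alpha$ the collection $\{\varphi^\alpha_{\bar{a}} : \bar{a}\in M^{<\omega}\}$ is countable, and the equivalence relation on tuples given by $\bar{a}\sim_\alpha\bar{b}\iff \varphi^\alpha_{\bar{a}}=\varphi^\alpha_{\bar{b}}$ is refined as $\alpha$ grows. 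Since there are only countably many tuples, the sequence of partitions stabilises at some countable Scott ordinal $\alpha^*(M)$, which is again canonically determined by $M$ (it depends only on the isomorphism type).

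Now define
\[
Sc(M) \; = \; \varphi^{\alpha^*(M)}_{\emptyset} \wedge \bigwedge_{E\in\mathcal{E}} \forall \bar{x}\bigl(\varphi^{\alpha^*(M)}_{E}(\bar{x}) \to \varphi^{\alpha^*(M)+1}_{E}(\bar{x})\bigr),
\]
where $\mathcal{E}$ is the set of $\sim_{\alpha^*(M)}$-classes and, for each class $E$, $\varphi^{\alpha^*(M)}_E$ denotes the common value of $\varphi^{\alpha^*(M)}_{\bar{a}}$ on $\bar{a}\in E$ (so we are not choosing representatives). Again this uses set-coded conjunction, so no choice is involved and the entire code $Sc(M)$ is a function of the isomorphism type of $M$. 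This immediately gives $M\cong N\Rightarrow Sc(M)=Sc(N)$, and of course $Sc(M)=Sc(N)\Rightarrow N\models Sc(M)$ since $M\models Sc(M)$ by construction.

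The remaining implication $N\models Sc(M)\Rightarrow M\cong N$ (for countable $N$) is the standard Scott back-and-forth: from $N\models Sc(M)$ one shows by induction on $\alpha$ that whenever $\bar{a}\in M$, $\bar{c}\in N$ and $N\models\varphi^{\alpha^*(M)}_{\bar{a}}(\bar{c})$, then for every $b\in M$ there is $d\in N$ with $N\models\varphi^{\alpha^*(M)}_{\bar{a}b}(\bar{c},d)$, and symmetrically; an $\omega$-step enumeration of $M$ and $N$ yields an isomorphism. The main technical obstacle is precisely the canonicity claim $M\cong N\Rightarrow Sc(M)=Sc(N)$: this is what forces us to use set-coded infinitary conjunctions and to let the outer conjunction range over $\sim_{\alpha^*}$-classes rather than over representatives or enumerations; once the recursive definitions are phrased this way, an easy induction on $\alpha$ handles everything.
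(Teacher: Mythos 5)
Your proposal is correct and follows essentially the same route as the paper: the same Keisler-style recursion on $\alpha<\omega_1$ with set-coded conjunctions/disjunctions to force canonicity, a canonically chosen stabilization ordinal, the same shape of $Sc(M)$, back-and-forth for $N\models Sc(M)\Rightarrow M\cong N$, and induction on $\alpha$ for $M\cong N\Rightarrow Sc(M)=Sc(N)$. Your explicit justification via stabilization of the partitions $\sim_\alpha$ (and the observation that the least such ordinal is isomorphism-invariant) is a slightly more detailed account of the step the paper states without proof.
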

\begin{proof}
We repeat the construction from \cite{KeislerInf}. Given a countable
$L$-structure $M$, we define $Sc\left(M\right)$. By induction on
$\alpha<\omega_{1}$, for every finite tuple $\bar{a}=\left(a_{0},\ldots,a_{n-1}\right)\in M^{<\omega}$,
we define an $L_{\omega_{1},\omega}$-formula $\phi_{\alpha,\bar{a}}\left(\bar{x}\right)$
with $\bar{x}=\left(x_{0},\ldots,x_{n-1}\right)$ as follows: 

For $\alpha=0$, $\phi_{\alpha,\bar{a}}\left(\bar{x}\right)=\bigwedge\set{\varphi\left(\bar{x}\right)}{M\models\varphi\left(\bar{a}\right),\,\varphi\mbox{ atomic or a negation of an atomic formula}}$.

For $\alpha$ limit, $\phi_{\alpha,\bar{a}}\left(\bar{x}\right)=\bigwedge\set{\phi_{\beta,\bar{a}}\left(\bar{x}\right)}{\beta<\alpha}$.

For $\alpha=\beta+1$, 
\begin{eqnarray*}
\phi_{\alpha,\bar{a}}\left(\bar{x}\right) & = & \phi_{\beta,\bar{a}}\left(\bar{x}\right)\land\\
 &  & \forall x_{n+1}\bigvee\set{\phi_{\beta,\bar{a}\concat b}\left(\bar{x},x_{n+1}\right)}{b\in M}\land\bigwedge\set{\exists x_{n+1}\phi_{\beta,\bar{a}\concat b}\left(\bar{x},x_{n+1}\right)}{b\in M}.
\end{eqnarray*}
Now, one can prove by induction on $\alpha<\omega_{1}$ that $\bar{a}\models\phi_{\alpha,\bar{a}}\left(\bar{x}\right)$.
For some $\beta<\omega_{1}$, $M\models\forall\bar{x}\left(\phi_{\beta,\bar{a}}\left(\bar{x}\right)\to\phi_{\beta+1,\bar{a}}\left(\bar{x}\right)\right)$
for all $\bar{a}\in M^{<\omega}$. Let $Sc\left(M\right)$ be 
\[
\phi_{\beta,\emptyset}\wedge\bigwedge\set{\forall\bar{x}\left(\phi_{\beta,\bar{a}}\left(\bar{x}\right)\to\phi_{\beta+1,\bar{a}}\left(\bar{x}\right)\right)}{\bar{a}\in M^{<\omega}}.
\]

By a back and forth argument as in \cite{KeislerInf}, if $N\models Sc\left(M\right)$
then $M\cong N$. In fact, if $\bar{a}\in M^{<\omega}$ and $\bar{b}\in N^{<\omega}$,
and $M\models\phi_{\beta,\bar{c}}\left(\bar{a}\right)$, $N\models\phi_{\beta,\bar{c}}\left(\bar{b}\right)$
for some $\bar{c}\in M^{<\omega}$ of the same length as $\bar{a}$
and $\bar{b}$, then there is an isomorphism between $M$ and $N$
taking $\bar{a}$ to $\bar{b}$.

For the other direction, suppose $f:M\to N$ is an isomorphism. It
is easy to see by induction on $\alpha<\omega_{1}$ that for every
$\bar{a}\in M^{<\omega}$, $\phi_{\alpha,\bar{a}}=\phi_{\alpha,f\left(\bar{a}\right)}$
(in the induction step we rely on the choice of coding --- as sets
and not sequences). In particular, $Sc\left(M\right)=Sc\left(N\right)$. \end{proof}
\begin{prop}
\label{prop:Scott sentence in V}Assume that $\left(P,L,\tauname\right)$
has the isomorphism property. Then there is an $L_{\infty,\omega}$-sentence
$\ScottPsi$ in $\Vv$ such that $P\forces$``$\check{\ScottPsi}$
is the Scott sentence of $\tauname$''. Furthermore, there is a complete
first order theory $T\in\Vv$ in the language $L$ such that $P\forces\check{T}=Th\left(\tauname\right)$.\end{prop}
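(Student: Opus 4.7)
The plan is to combine the uniqueness of Scott sentences for isomorphic countable structures (Proposition \ref{prop:Scott}) with the rigidity of names agreeing on a product of forcings (Corollary \ref{cor:product equal}). Let $\dot{\sigma}$ be the $P$-name in $\Vv$ given by $\dot{\sigma} = Sc(\tauname)$; that is, $\dot{\sigma}[G] = Sc(\tauname[G])$ for every $P$-generic $G$. Then $\dot{\sigma}$ is well-defined in $\Vv$ because the function $Sc$ from Proposition \ref{prop:Scott} is absolute up to any transitive class model of ZFC containing the countable $L$-structure.

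Lift $\dot{\sigma}$ to $P \times P$ in the two obvious ways to obtain names $\dot{\sigma}_{1}'$, $\dot{\sigma}_{2}'$ with $\dot{\sigma}_{i}'[G_{1}\x G_{2}] = Sc(\tauname_{i}'[G_{1}\x G_{2}])$. By the isomorphism property of $(P,L,\tauname)$ we have $P\x P \forces \tauname_{1}' \cong \tauname_{2}'$, so by Proposition \ref{prop:Scott} applied inside $\Vv[G_{1}\x G_{2}]$, these two structures have the same Scott sentence. Hence $P\x P\forces \dot{\sigma}_{1}' = \dot{\sigma}_{2}'$, and in particular the trivial condition forces this equality. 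Corollary \ref{cor:product equal} then supplies some $\ScottPsi \in \Vv$ with $P\forces \dot{\sigma} = \check{\ScottPsi}$, i.e.\ $P\forces$ ``$\check{\ScottPsi}$ is the Scott sentence of $\tauname$''.

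It remains to verify that $\ScottPsi$ is already an $L_{\infty,\omega}$-sentence in $\Vv$. In any $P$-generic extension $\Vv[G]$ the object $\ScottPsi$ is the Scott sentence of a countable $L$-structure, hence is an $L_{\omega_{1},\omega}$-sentence there, and a fortiori an $L_{\infty,\omega}$-sentence. Since being an $L_{\infty,\omega}$-sentence is absolute between transitive models of ZFC with the same ordinals (Remark \ref{rem:L_infinity,omega is absoluite}), $\ScottPsi$ is an $L_{\infty,\omega}$-sentence in $\Vv$ as well.

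For the ``furthermore'' clause repeat the same three-line argument with the first-order theory $Th(\tauname)$ in place of $Sc(\tauname)$: the $P$-name $\dot{T}$ for $Th(\tauname)$ lifts to $P\x P$-names $\dot{T}_{1}'$, $\dot{T}_{2}'$; isomorphic structures are elementarily equivalent, so $P\x P\forces \dot{T}_{1}' = \dot{T}_{2}'$; Corollary \ref{cor:product equal} then gives $T \in \Vv$ with $P\forces \check{T} = Th(\tauname)$. The only step requiring any care is the invocation of Corollary \ref{cor:product equal} --- one must note that $\dot{\sigma}$ and $\dot{T}$ genuinely live in $\Vv$ (not merely in generic extensions), which is immediate from the absoluteness of the coding of $L_{\infty,\omega}$-formulas and first-order sentences together with the fact that $\tauname$ is a $\Vv$-name.
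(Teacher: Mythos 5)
Your proposal is correct and follows essentially the same route as the paper: name $Sc(\tauname)$, lift to $P\times P$, use Proposition \ref{prop:Scott} to get forced equality of the two lifted names, apply Corollary \ref{cor:product equal}, and invoke Remark \ref{rem:L_infinity,omega is absoluite} for the $L_{\infty,\omega}$-sentence claim, with the identical argument for $Th(\tauname)$. The only cosmetic difference is your added justification that the name for $Sc(\tauname)$ exists in $\Vv$, which the paper takes as immediate (any $P$-name is by definition an object of $\Vv$, and such a name exists by the maximal principle).
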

\begin{proof}
Let $\dot{\ScottPsi}$ be a name for $Sc\left(\tauname\right)$ in
$P$. Then $P\forces$``$\dot{\ScottPsi}$ is an $L_{\omega_{1},\omega}$-sentence''.
Let $\dot{\ScottPsi}_{1}$ and $\dot{\ScottPsi}_{2}$ be $P\x P$-names
such that for any generic $G=G_{1}\x G_{2}$ for $P\x P$, $\dot{\ScottPsi}_{1}\left[G\right]=\dot{\ScottPsi}\left[G_{1}\right]$
and similarly for $\dot{\ScottPsi}_{2}$. Then $P\x P\forces\dot{\ScottPsi}_{1}=\dot{\ScottPsi}_{2}$
by Proposition \ref{prop:Scott}. Hence by Corollary \ref{cor:product equal},
for some $\ScottPsi\in\Vv$, $P\forces\dot{\ScottPsi}=\check{\ScottPsi}$,
so by Remark \ref{rem:L_infinity,omega is absoluite}, $\ScottPsi$
is an $L_{\infty,\omega}$-sentence and we are done. 

The furthermore part, regarding the first order theory, is proved
similarly. \end{proof}
\begin{rem}
\label{rem:Another reason for remark in intro}In the slightly different
context of Section \ref{sec:The main question}, where we had two
forcing notions $P_{1}$ and $P_{2}$ and two names $\tauname_{1}$
and $\tauname_{2}$, if $P_{1}\x P_{2}\forces\tauname'_{1}\cong\tauname_{2}'$
and for some $L$-structure $M\in\Vv$, $P_{1}\forces\tauname_{1}\cong\check{M}$,
then Proposition \ref{prop:Scott sentence in V} gives another reason
why in this case, if $P_{2}\forces\left|\check{M}\right|=\aleph_{0}$,
then also $P_{2}\forces\tauname_{2}\cong\check{M}$ (without using
Mostowski's absoluteness). Why? as in the proof of said proposition,
we can show that the Scott sentence $\ScottPsi$ of $\tauname_{1}$
(which is the same as the one of $\tauname_{2}$) is in $\Vv$. But
then $M\models\ScottPsi$ (in $\Vv$) so $P_{2}\forces\check{M}\models\ScottPsi$
and we are done by Proposition \ref{prop:Scott}. 
\end{rem}

\begin{rem}
\label{rem:why inifinte}Why did we restrict to the case where the
structures are infinite? if $\left(P,L,\tauname\right)$ has the isomorphism
property but $P\forces\left|\tauname\right|<\omega$, then the first
order theory $T$ in $L$ which $P$ forces to be $Th\left(\tauname\right)$
is in $\Vv$ by Proposition \ref{prop:Scott sentence in V}. Hence
$T$ is a consistent first order theory in $L$, and hence has a model
$M\in\Vv$. But now $P\forces\check{M}\equiv\tauname$, and as $M$
is finite, they must be isomorphic. 
\end{rem}

\subsection{\label{sub:scott->theory}from a Scott sentence to a theory}

The outline for this subsection is as follows. First we show the well
known result that an $L_{\lambda^{+},\omega}$-sentence $\ScottPsi$
is ``the same thing as'' a first order theory $T_{\ScottPsi}$ and
a collection of types to be omitted. We then show that when $\ScottPsi$
is a Scott sentence of a structure $M$, then in the language of $T_{\ScottPsi}$,
$M$ is an atomic model of $T_{\ScottPsi}^{co}$ --- the natural completion
of $T_{\ScottPsi}$. In the end of this section we remark that $T_{\ScottPsi}^{co}$
can be constructed directly from $T_{\ScottPsi}$ in an absolute way,
without passing through $M$ (hence, when $M$ is constructed in some
generic extension but $\ScottPsi$ is already in the ground model,
then so is $T_{\ScottPsi}^{co}$). 
\begin{lem}
\label{lem:languae,theory,types}Suppose that $\ScottPsi$ is a consistent
$L_{\lambda^{+},\omega}$-sentence in the language $L$. Then there
is a language $L_{\ScottPsi}$, a consistent first order $L_{\ScottPsi}$-theory
$T_{\ScottPsi}$ of size $\leq\lambda$, a collection $\Gamma_{\ScottPsi}$
of partial $T_{\ScottPsi}$-types and a canonical bijection $H_{\ScottPsi}$
which respects isomorphisms between the class of $L$-structures $M\models\ScottPsi$
and the class of $L_{\ScottPsi}$ structures $N\models T_{\ScottPsi}$
such that $N$ omits all the types in $\Gamma_{\ScottPsi}$.\end{lem}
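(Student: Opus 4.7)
The plan is to use the standard device of replacing each subformula of $\ScottPsi$ by a fresh relation symbol, then capturing the (possibly infinitary) meaning of these symbols by a first order theory together with a collection of types to omit. Let $\operatorname{Sub}(\ScottPsi)$ denote the set of subformulas of $\ScottPsi$ (computed on the code, so that a subformula of $\bigwedge_{i\in I}\psi_{i}$ is any $\psi_{i}$ together with the subformulas of the $\psi_{i}$'s, plus the formula itself). Since $\ScottPsi$ is an $L_{\lambda^{+},\omega}$-sentence and $|L|\le\lambda$, an easy induction shows $|\operatorname{Sub}(\ScottPsi)|\le\lambda$. For each $\varphi(\bar{x})\in\operatorname{Sub}(\ScottPsi)$ with $\bar{x}=(x_{0},\ldots,x_{n-1})$ the tuple of free variables (ordered canonically from the code), introduce a fresh $n$-ary predicate symbol $P_{\varphi}$, and let $L_{\ScottPsi}=L\cup\set{P_{\varphi}}{\varphi\in\operatorname{Sub}(\ScottPsi)}$, so $|L_{\ScottPsi}|\le\lambda$.

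Next I would let $T_{\ScottPsi}$ consist of the sentence $P_{\ScottPsi}$ together with the following universal axioms, one family for each form of subformula: for $\varphi$ atomic, $\forall\bar{x}\,(P_{\varphi}(\bar{x})\leftrightarrow\varphi(\bar{x}))$; for $\varphi=\neg\psi$, $\forall\bar{x}\,(P_{\varphi}(\bar{x})\leftrightarrow\neg P_{\psi}(\bar{x}))$; for $\varphi=\exists y\,\psi(\bar{x},y)$, $\forall\bar{x}\,(P_{\varphi}(\bar{x})\leftrightarrow\exists y\,P_{\psi}(\bar{x},y))$; and for $\varphi=\bigwedge_{i\in I}\psi_{i}$, only the ``easy'' direction $\forall\bar{x}\,(P_{\varphi}(\bar{x})\to P_{\psi_{i}}(\bar{x}))$ for each $i\in I$. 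The converse of the conjunction axiom, which is genuinely infinitary, is enforced by type omission: set
\[
\Gamma_{\ScottPsi}=\set{\left\{ \neg P_{\varphi}(\bar{x})\right\} \cup\set{P_{\psi_{i}}(\bar{x})}{i\in I}}{\varphi=\bigwedge_{i\in I}\psi_{i}\in\operatorname{Sub}(\ScottPsi)}.
\]
Clearly $|T_{\ScottPsi}|\le\lambda$.

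For the bijection $H_{\ScottPsi}$, to each $L$-structure $M\models\ScottPsi$ I would assign the unique expansion $H_{\ScottPsi}(M)$ to $L_{\ScottPsi}$ in which $P_{\varphi}^{H_{\ScottPsi}(M)}=\set{\bar{a}\in M^{n}}{M\models\varphi(\bar{a})}$. A routine induction on subformula complexity shows $H_{\ScottPsi}(M)\models T_{\ScottPsi}$ and omits every type in $\Gamma_{\ScottPsi}$ (for a conjunction subformula, if $\bar{a}$ satisfies every $P_{\psi_{i}}(\bar{x})$ then by induction $M\models\psi_{i}(\bar{a})$ for all $i$, so $M\models\varphi(\bar{a})$, hence $P_{\varphi}(\bar{a})$ holds). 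Conversely, given any $N\models T_{\ScottPsi}$ omitting all the types in $\Gamma_{\ScottPsi}$, I would show by induction on the rank of $\varphi\in\operatorname{Sub}(\ScottPsi)$ that for all $\bar{a}\in N^{n}$, $N\models P_{\varphi}(\bar{a})\iff N\!\upharpoonright\!L\models\varphi(\bar{a})$. The atomic, negation, and existential steps follow from the defining axioms; the conjunction step is the one place where type omission is used, giving the missing implication from the conjuncts to the whole. Setting $H_{\ScottPsi}^{-1}(N)=N\!\upharpoonright\!L$ gives the inverse, and since both directions are defined uniformly from the symbols of $\ScottPsi$, the bijection commutes with isomorphisms of $L$-structures.

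The one genuine subtlety — the ``main obstacle'' — is making sure that the construction is \emph{canonical} in the sense required later (so that the same $T_{\ScottPsi},\Gamma_{\ScottPsi}$ are built in every transitive model of $\mathsf{ZFC}$ seeing $\ScottPsi$). For this I would rely on the paper's coding of infinite conjunctions as sets (not sequences), together with a fixed bijection between free-variable tuples and their canonical re-enumerations; then $\operatorname{Sub}(\ScottPsi)$, and with it $L_{\ScottPsi},T_{\ScottPsi},\Gamma_{\ScottPsi}$, is computed by an absolute recursion on the rank of $\ScottPsi$, exactly as in Remark \ref{rem:L_infinity,omega is absoluite}. Consistency of $T_{\ScottPsi}$ (including omissibility of $\Gamma_{\ScottPsi}$) follows at once from the fact that $H_{\ScottPsi}(M)$ witnesses it whenever $M\models\ScottPsi$, which exists by hypothesis.
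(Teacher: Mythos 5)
Your proof is correct and follows essentially the same route as the paper's: one relation symbol per subformula of $\ScottPsi$, first order axioms handling the atomic, negation and existential steps together with the easy half of each conjunction, and the genuinely infinitary half of each conjunction enforced by omitting the types $\left\{ \neg P_{\varphi}\left(\bar{x}\right)\right\} \cup\set{P_{\psi_{i}}\left(\bar{x}\right)}{i\in I}$. The only (harmless) cosmetic difference is that you keep $L$ inside $L_{\ScottPsi}$ and add biconditionals for atomic subformulas, whereas the paper discards $L$ entirely and instead arranges for every atomic formula to occur as a subformula of $\ScottPsi$, so that the original $L$-structure can be recovered from the new relation symbols alone.
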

\begin{proof}
Recall that for an $L_{\infty,\omega}$ formula $\psi$, a sub-formula
is an $L_{\infty,\omega}$ formula that appears in the construction
of $\psi$. So for instance, sub-formulas of $\psi=\bigwedge\set{\varphi_{i}}{i\in I}$
are $\psi$ and sub-formulas of $\varphi_{i}$ for all $i$, but \uline{not}
any conjunction $\bigwedge\set{\varphi_{i}}{i\in I'}$ for $I'\subseteq I$. 

We may assume that $\left|L\right|\leq\lambda$, by restricting to
symbols which appear in $\ScottPsi$. We may also assume that $\ScottPsi$
contains as sub-formulas all formulas of the form $P\left(x_{0},\ldots,x_{n-1}\right)$
and $F\left(x_{0},\ldots,x_{n-1}\right)=x_{n}$ for every $n$-place
relation symbol $P$ and $n$-place function symbol $F$. Otherwise,
we replace $\ScottPsi$ with $\ScottPsi\wedge\varphi$ where $\varphi$
is a big conjunction of sentences of the form $\forall\bar{x}\left(P\left(\bar{x}\right)\vee\neg P\left(\bar{x}\right)\right)$
and $\forall\bar{x}\exists yF\left(\bar{x}\right)=y$.

Let $L_{\ScottPsi}$ be comprised of $n$-ary relation symbols $R_{\varphi}$
for every sub-formula $\varphi\left(x_{0},\ldots,x_{n-1}\right)$
of $\ScottPsi$ (note that any sub-formula has a finite number of
free variables as $\ScottPsi$ is a sentence). So $\left|L_{\ScottPsi}\right|\leq\lambda$.
The theory $T_{\ScottPsi}$ will have the axioms:
\begin{itemize}
\item $\exists x_{n}R_{\varphi}\left(x_{0},\ldots,x_{n}\right)\leftrightarrow R_{\exists x_{n}\varphi}\left(x_{0},\ldots,x_{n-1}\right)$
whenever $\exists x_{n}\varphi\left(x_{0},\ldots,x_{n}\right)$ is
a sub-formula of $\ScottPsi$. 
\item $\neg R_{\varphi}\leftrightarrow R_{\neg\varphi}$ whenever $\neg\varphi$
is a sub-formula of $\ScottPsi$. 
\item $R_{\bigwedge\set{\varphi_{i}}{i\in I}}\to R_{\varphi_{i}}$ for every
$i\in I$ whenever $\bigwedge\set{\varphi_{i}}{i\in I}$ is a sub-formula
of $\ScottPsi$. 
\item $R_{\ScottPsi}$. 
\end{itemize}
The set of types $\Gamma_{\ScottPsi}$ consists of types of the form
$\Sigma_{\Phi}=\set{R_{\varphi_{i}}\left(\bar{x}\right)}{i\in I}\cup\left\{ \neg R_{\bigwedge\set{\varphi_{i}}{i\in I}}\left(\bar{x}\right)\right\} $
whenever $\Phi=\bigwedge\set{\varphi_{i}\left(\bar{x}\right)}{i\in I}$
is a sub-formula of $\ScottPsi$ (and $\bar{x}$ a tuple of variables).

Finally, given an $L$-structure $M$, the induced $L_{\ScottPsi}$-structure
$M_{*}=H_{\ScottPsi}\left(M\right)$ has the same universe, and for
every $R_{\varphi}\in L_{\ScottPsi}$, $R_{\varphi}^{M_{*}}=\varphi^{M}$.
Note that $M_{*}$ omits all the types in $\Gamma_{\ScottPsi}$, and
since $\ScottPsi$ is consistent, $T_{\ScottPsi}$ is consistent.
Also, if $M_{1}\cong M_{2}$ then $H_{\ScottPsi}\left(M_{1}\right)\cong H_{\ScottPsi}\left(M_{2}\right)$. 

Conversely, given a model $N$ of $T_{\ScottPsi}$ which omits all
types in $\Gamma_{\ScottPsi}$, define an $L$-structure $N_{*}$,
which in fact will be $H_{\ScottPsi}^{-1}\left(N\right)$, by $P^{N_{*}}=\left(R_{P\left(x_{0},\ldots,x_{n-1}\right)}\right)^{N}$
for every $n$-place predicate $P\in L$, and similarly for every
$n$-place function symbol $F\in L$. Note that this map also respects
isomorphisms. 
\end{proof}

\begin{rem}
\label{rem:absoluteness of G_psi}The map $\ScottPsi\mapsto\left(L_{\ScottPsi},T_{\ScottPsi},\Gamma_{\ScottPsi},H_{\ScottPsi}\right)$
is absolute in the sense that if $\ScottPsi\in\Vv$ and $\Uu$ is
a transitive model of ZFC extending $\Vv$, then $\left(L_{\ScottPsi},T_{\ScottPsi},\Gamma_{\ScottPsi}\right)$
are the same in $\Uu$ as in $\Vv$ and $H_{\ScottPsi}$ defines the
same function when restricted to $\Vv$. This is easily seen by induction
on the rank of $\ScottPsi$. 
\end{rem}
Recall:
\begin{defn}
Let $T$ be a consistent first order theory (not necessarily complete),
and let $\Sigma\left(x_{0},\ldots,x_{n-1}\right)$ be a partial type.
We say that a formula $\varphi\left(x_{0},\ldots,x_{n-1}\right)$
\emph{isolates} $\Sigma$ in $T$ if $T\cup\left\{ \exists\bar{x}\varphi\left(\bar{x}\right)\right\} $
is consistent and $T\vdash\varphi\to\psi$ for every $\psi\in\Sigma$.
The partial type $\Sigma$ is \emph{isolated} in $T$ if some formula
isolates it in $T$.
\end{defn}

\begin{defn}
A structure $M$ for a language $L$ is called \emph{atomic} if for
every finite tuple $\bar{a}\in M^{<\omega}$, $\tp\left(\bar{a}/\emptyset\right)$
is isolated in $Th\left(M\right)$. Similarly, we say that a set $A\subseteq M$
is \emph{atomic} if for every finite tuple $\bar{a}\in A^{<\omega}$,
$\tp\left(\bar{a}/\emptyset\right)$ is isolated in $Th\left(M\right)$.
We add ``over $B$'' for some set $B\subseteq M$ to mean that we
replace $\emptyset$ with $B$. \end{defn}
\begin{prop}
\label{prop:Gpsi is atomic}Suppose that $M$ is a countable structure,
and that $\ScottPsi=Sc\left(M\right)$ is its Scott sentence (see
Proposition \ref{prop:Scott}). Then the induced theory $T_{\ScottPsi}$
is countable and has a natural completion, $T_{\ScottPsi}^{co}=Th\left(H_{\ScottPsi}\left(M\right)\right)$.
Furthermore, the induced $L_{\ScottPsi}$-structure $H_{\ScottPsi}\left(M\right)$
is atomic. \end{prop}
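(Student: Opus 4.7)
The plan is to leverage the explicit form of $\ScottPsi=Sc(M)$ from the proof of Proposition \ref{prop:Scott}, and to use the back-and-forth argument stated there to show that, for each $\bar{a}\in M^{<\omega}$, the first-order formula $R_{\phi_{\beta,\bar{a}}}(\bar{x})$ isolates $\tp(\bar{a}/\emptyset)$, where $\beta$ is the stabilization ordinal used in building $\ScottPsi$.

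First I dispatch the preliminary parts. Since $L$, $M$, and $\beta$ are countable, inspecting the recursion shows that the set of sub-formulas of $\ScottPsi$ is countable, so $|L_{\ScottPsi}|\le\aleph_{0}$ and $T_{\ScottPsi}$ is countable. Lemma \ref{lem:languae,theory,types} supplies $H_{\ScottPsi}(M)\models T_{\ScottPsi}$, so $T_{\ScottPsi}^{co}:=\Th(H_{\ScottPsi}(M))$ is a consistent complete first-order completion of $T_{\ScottPsi}$.

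For atomicity, fix $\bar{a}\in M^{<\omega}$. The formula $\phi_{\beta,\bar{a}}$ appears as a sub-formula of $\ScottPsi$ (this is visible from the explicit form given in the proof of Proposition \ref{prop:Scott}), so $R_{\phi_{\beta,\bar{a}}}(\bar{x})$ is a first-order atomic $L_{\ScottPsi}$-formula, and since $M\models\phi_{\beta,\bar{a}}(\bar{a})$ and $R_{\varphi}^{H_{\ScottPsi}(M)}=\varphi^{M}$, the tuple $\bar{a}$ realizes it in $H_{\ScottPsi}(M)$. To show isolation, I apply the strengthened back-and-forth statement from the proof of Proposition \ref{prop:Scott}, with $N=M$ and $\bar{c}=\bar{a}$: any $\bar{b}\in M^{<\omega}$ with $M\models\phi_{\beta,\bar{a}}(\bar{b})$ lies in the same $\Aut(M)$-orbit as $\bar{a}$. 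Because $H_{\ScottPsi}$ transports $L$-isomorphisms to $L_{\ScottPsi}$-isomorphisms on the same underlying bijection, $\bar{b}$ also lies in the same $\Aut(H_{\ScottPsi}(M))$-orbit as $\bar{a}$, so they realize the same complete first-order type in $H_{\ScottPsi}(M)$. Hence for every first-order $L_{\ScottPsi}$-formula $\theta(\bar{x})$, exactly one of $\forall\bar{x}(R_{\phi_{\beta,\bar{a}}}\to\theta)$ or $\forall\bar{x}(R_{\phi_{\beta,\bar{a}}}\to\neg\theta)$ holds in $H_{\ScottPsi}(M)$ and therefore belongs to $T_{\ScottPsi}^{co}$, which is precisely the isolation condition.

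The subtle point to avoid is running the back-and-forth inside an arbitrary model of $T_{\ScottPsi}$: such a model need not omit the types in $\Gamma_{\ScottPsi}$, and the axioms supply only the implication $R_{\bigwedge\varphi_{i}}\to R_{\varphi_{i}}$ and not its converse. The argument above sidesteps this by carrying out all combinatorics inside $M$ itself and then invoking completeness of $T_{\ScottPsi}^{co}$, which together with $H_{\ScottPsi}(M)\models T_{\ScottPsi}^{co}$ transports first-order truths to every model of that theory.
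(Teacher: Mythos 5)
Your proof is correct and follows essentially the same route as the paper: the paper likewise observes that $T_{\ScottPsi}$ is countable because $\ScottPsi\in L_{\omega_{1},\omega}$, and proves atomicity by showing that $R_{\phi_{\beta,\bar{a}}}$ isolates $\tp\left(\bar{a}/\emptyset\right)$, using the back-and-forth statement from Proposition \ref{prop:Scott} to get an automorphism of $M$ (hence of $H_{\ScottPsi}\left(M\right)$) moving any other realization of $R_{\phi_{\beta,\bar{a}}}$ onto $\bar{a}$. The only cosmetic difference is that the paper phrases the isolation step as a proof by contradiction while you argue directly via orbits; the content is identical.
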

\begin{proof}
The theory $T_{\ScottPsi}$ is countable since $\ScottPsi$ is in
$L_{\omega_{1},\omega}$. This also implies that $\Gamma_{\ScottPsi}$
is countable. 

In order to show that $H_{\ScottPsi}\left(M\right)$ is atomic, take
any tuple $\bar{a}\in M^{<\omega}$. We will show that $R_{\phi_{\beta,\bar{a}}}$
isolates $\tp\left(\bar{a}/\emptyset\right)$ (recall the construction
of $\ScottPsi=Sc\left(M\right)$ in Proposition \ref{prop:Scott}).
Suppose that $\theta\left(\bar{x}\right)\in\tp\left(\bar{a}/\emptyset\right)$,
and $H_{\ScottPsi}\left(M\right)\models\exists\bar{x}\left(R_{\phi_{\beta,\bar{a}}}\land\neg\theta\left(\bar{x}\right)\right)$.
Suppose that $\bar{b}\models R_{\phi_{\beta,\bar{a}}}\left(\bar{x}\right)\land\neg\theta\left(\bar{x}\right)$
where $\bar{b}$ is from $M$. But then $M\models\phi_{\beta,\bar{a}}\left(\bar{b}\right)$,
so there is an automorphism $\sigma$ of $M$ which takes $\bar{b}$
to $\bar{a}$ (see the proof of Proposition \ref{prop:Scott}). But
then $\sigma$ is also an automorphism of $H_{\ScottPsi}\left(M\right)$
--- a contradiction.\end{proof}
\begin{rem}
\label{rem:abs of completion}In the same context of Proposition \ref{prop:Gpsi is atomic},
the completion $T_{\ScottPsi}^{co}$ can also be constructed directly
from $T_{\ScottPsi}$ without passing through $M$. Namely, for an
ordinal $\alpha$ define a consistent $L_{\ScottPsi}$-theory $T_{\ScottPsi}^{\alpha}$
by induction on $\alpha$ by: $T_{\ScottPsi}^{0}=T_{\ScottPsi}$;
for $\alpha$ limit, take $T_{\ScottPsi}^{\alpha}=\bigcup_{\beta<\alpha}T_{\ScottPsi}^{\beta}$;
finally, for $\alpha=\beta+1$, define 

\[
T_{\ScottPsi}^{\alpha}=T_{\ScottPsi}^{\beta}\cup\set{\neg\exists\bar{x}\left(\varphi\left(\bar{x}\right)\right)}{\varphi\left(\bar{x}\right)\mbox{ isolates }\Sigma_{\Phi}\mbox{ in }T_{\ScottPsi}^{\beta}\mbox{ for some sub-formula }\Phi\mbox{ of }\ScottPsi}.
\]

(Recall $\Sigma_{\Phi}$ from the construction of $T_{\ScottPsi}$in
Lemma \ref{lem:languae,theory,types} above.)

Since $L_{\ScottPsi}$ is countable we must get stuck at some countable
ordinal $\alpha$, and we let $T_{\ScottPsi}^{co}=T_{\ScottPsi}^{\alpha}$.
Since $H_{\ScottPsi}\left(M\right)$ omits all the types in $\Gamma_{\ScottPsi}$,
$H_{\ScottPsi}\left(M\right)\models T_{\ScottPsi}^{\beta}$ for all
$\beta$, and in particular $H_{\ScottPsi}\left(M\right)\models T_{\ScottPsi}^{\alpha}$.
The types in $\Gamma_{\ScottPsi}$ are not isolated in $T_{\ScottPsi}^{\alpha}$
(if $\varphi\left(\bar{x}\right)$ isolates some $\Sigma_{\Phi}$
in $T_{\ScottPsi}^{\alpha}$, then $T_{\ScottPsi}^{\alpha+1}=T_{\ScottPsi}^{\alpha}$
includes $\neg\exists\bar{x}\left(\varphi\left(\bar{x}\right)\right)$).
Moreover, this is true for any consistent extension $T_{\ScottPsi}^{\alpha}\cup\left\{ \theta\right\} $
for a sentence $\theta$ (if $\varphi\left(\bar{x}\right)$ isolates
some type in $T_{\ScottPsi}^{\alpha}\cup\left\{ \theta\right\} $,
then $\varphi\left(\bar{x}\right)\wedge\theta$ isolates it in $T_{\ScottPsi}^{\alpha}$).
By the omitting type theorem for countable languages (see \cite[Theorem 4.2.4]{Marker}),
$T_{\ScottPsi}^{\alpha}\cup\left\{ \theta\right\} $ has a countable
model $N$ which omits all the types in $\Gamma_{\ScottPsi}$, and
so by Lemma \ref{lem:languae,theory,types}, $H_{\ScottPsi}^{-1}\left(N\right)\models\ScottPsi$,
and so $H_{\ScottPsi}^{-1}\left(N\right)\cong M$ and hence $N\cong H_{\ScottPsi}\left(M\right)$.
We conclude that for any such $\theta$, $T_{\ScottPsi}^{\alpha}\vdash\theta$.
Hence $T_{\ScottPsi}^{\alpha}$ is complete. This construction of
$T_{\ScottPsi}^{co}$ is absolute from $\ScottPsi$. 
\end{rem}

\section{\label{sec:Translating-the-question}Translating the question}

Here we will translate ($\star'$) from Section \ref{sec:The main question}
to a question about the existence of atomic models. 

Suppose that $\left(P,L,\tauname\right)$ has the isomorphism property.
By Proposition \ref{prop:Scott sentence in V}, there is an $L_{\infty,\omega}$
sentence $\ScottPsi_{\tauname}$ in $\Vv$ which $P$ forces to be
$\tau$'s Scott sentence. It makes sense then to let $L_{\ScottPsi_{\tauname}}$,
$T_{\ScottPsi_{\tauname}}$, $T_{\ScottPsi_{\tauname}}^{co}$ and
$\Gamma_{\ScottPsi_{\tauname}}$ be the induced language, theories
and collection of types as in Lemma \ref{lem:languae,theory,types}.
By Remarks \ref{rem:absoluteness of G_psi} and \ref{rem:abs of completion},
they are the same in $\Vv$ as in $\Vv\left[G\right]$ for any generic
filter $G$. 

Recall the following definition. 
\begin{defn}
Suppose $T$ is a first order theory. We say that the\emph{ isolated
types are dense in $T$} if whenever $\varphi\left(\bar{x}\right)$
is a consistent formula, i.e., $T\cup\left\{ \exists\bar{x}\varphi\right\} $
is consistent (where $\bar{x}$ is a finite tuple of variables), there
is a consistent formula $\theta\left(\bar{x}\right)$ such that $T\vdash\theta\to\varphi$
and $\theta$ isolates a complete type: for every formula $\psi\left(\bar{x}\right)$,
either $T\models\theta\to\psi$ or $T\models\theta\to\neg\psi$.\end{defn}
\begin{fact}
\label{fac:atomic equivalent}\cite[Thoerem 4.2.10]{Marker} Suppose
that $T$ is a countable complete theory, then the following are equivalent:
\begin{itemize}
\item $T$ has a countable atomic model.
\item The isolated types are dense in $T$. 
\end{itemize}
\end{fact}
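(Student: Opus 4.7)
The plan is to prove the two directions of the equivalence separately, both via standard arguments with the omitting types theorem doing the real work in one direction.

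\textbf{Direction (atomic $\Rightarrow$ dense).} First I would assume $M$ is a countable atomic model of $T$ and let $\varphi(\bar{x})$ be consistent with $T$. Since $T$ is complete, $M \models \exists \bar{x}\,\varphi(\bar{x})$, so some tuple $\bar{a} \in M^{<\omega}$ realizes $\varphi$. Atomicity provides a formula $\theta(\bar{x})$ isolating the complete type $\tp(\bar{a}/\emptyset)$. Since $\varphi \in \tp(\bar{a}/\emptyset)$, isolation forces $T \vdash \theta \to \varphi$, and $\theta$ is consistent (witnessed by $\bar{a}$). This direction is essentially immediate.

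\textbf{Direction (dense $\Rightarrow$ atomic).} The plan is to apply the omitting types theorem to the collection of all non-isolated complete types over $\emptyset$. Since $L$ is countable, for each $n<\omega$ there are at most countably many complete $n$-types over $\emptyset$, so enumerate all non-isolated complete types as $\{p_k : k < \omega\}$. The key step is to verify each $p_k$ is \emph{locally omittable}: for every consistent formula $\varphi(\bar{x})$ of the appropriate arity, there is some $\psi(\bar{x}) \in p_k$ with $T \cup \{\exists \bar{x}(\varphi \wedge \neg\psi)\}$ consistent. Here density is used: pick a formula $\theta(\bar{x})$ isolating a complete type $q$ with $T \vdash \theta \to \varphi$. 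Since $q$ is isolated but $p_k$ is not, $q \neq p_k$, so I can choose $\psi \in p_k \setminus q$. Then $\neg\psi \in q$, so $T \vdash \theta \to \neg\psi$, hence $T \vdash \theta \to (\varphi \wedge \neg\psi)$, and this is consistent because $\theta$ is.

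With local omittability established for every $p_k$, the omitting types theorem for countable languages (cited here as \cite[Theorem 4.2.4]{Marker}) produces a countable model $M \models T$ simultaneously omitting every $p_k$. Any tuple $\bar{a} \in M^{<\omega}$ then realizes a complete type over $\emptyset$ which is not among the $p_k$, hence is isolated, so $M$ is atomic.

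\textbf{Main obstacle.} The only non-routine step is the local omittability verification, which is exactly where the density hypothesis is consumed; everything else is either a direct use of completeness and atomicity (for the easy direction) or an appeal to the omitting types theorem as a black box (for the hard direction).
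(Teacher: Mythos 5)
The paper does not prove this Fact; it cites \cite[Theorem 4.2.10]{Marker}, so the only question is whether your argument is correct. Your first direction is fine, but the second direction has a genuine gap: the claim that a countable complete theory has at most countably many complete $n$-types over $\emptyset$ is false, and it can fail even under the density hypothesis. For example, take unary predicates $P_{s}$ and $Q_{s}$ for $s\in2^{<\omega}$, with the theory saying that $P_{s\frown0}$, $P_{s\frown1}$, $Q_{s}$ partition $P_{s}$ and each $Q_{s}$ is an infinite "leaf" with no further structure: each $Q_{s}(x)$ isolates a complete type and these are dense, yet every branch $\eta\in2^{\omega}$ yields a distinct non-isolated complete type containing $\left\{ P_{\eta\upharpoonright n}(x):n<\omega\right\}$, so there are $2^{\aleph_{0}}$ non-isolated complete types. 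Your plan of enumerating the non-isolated complete types as $\left\{ p_{k}:k<\omega\right\}$ and omitting them all therefore cannot be carried out: the omitting types theorem for countable languages only omits countably many types.

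The repair is standard and keeps your main idea. Instead of omitting each non-isolated complete type separately, omit for each arity $n$ the single countable partial type $\Sigma_{n}(\bar{x})=\set{\neg\theta(\bar{x})}{\theta\mbox{ isolates a complete }n\mbox{-type in }T}$ (countable because there are only countably many formulas). A model omits $\Sigma_{n}$ exactly when every $n$-tuple satisfies some isolating formula, i.e., realizes an isolated type; so a countable model omitting all $\Sigma_{n}$ is atomic. Non-isolation of $\Sigma_{n}$ is exactly where density enters: given a consistent $\varphi(\bar{x})$, density gives $\theta$ isolating a complete type with $T\vdash\theta\to\varphi$, and then $\neg\theta\in\Sigma_{n}$ while $\varphi\wedge\theta$ is consistent, which is what the omitting types theorem requires. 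With that substitution the rest of your argument goes through. (Your local-omittability computation for an individual non-isolated complete type $p_{k}$ is itself correct; the problem is solely that there may be uncountably many of them.)
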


\begin{fact}
\label{fac:atomic isomorphic} \cite[Theorem 4.2.8, Corollary 4.2.16]{Marker}
Suppose that $M$ and $N$ are countable atomic models of a countable
complete theory $T$. Then $M\cong N$. \end{fact}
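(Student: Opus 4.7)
The plan is a standard back-and-forth argument, the twist being that atomicity provides the extension lemma needed at each step. Enumerate $M = \{a_n : n < \omega\}$ and $N = \{b_n : n < \omega\}$, and build an increasing chain of finite partial elementary maps $f_0 \subseteq f_1 \subseteq \cdots$ between $M$ and $N$ such that at even stage $2n$ the element $a_n$ lies in $\operatorname{dom}(f_{2n})$ and at odd stage $2n+1$ the element $b_n$ lies in $\operatorname{range}(f_{2n+1})$. The union $f = \bigcup_n f_n$ is then a bijection and is elementary, hence an isomorphism.

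The core lemma to prove is: given a finite partial elementary map $f_n : \bar{a} \mapsto \bar{b}$ (meaning $\tp^M(\bar{a}) = \tp^N(\bar{b})$) and any $a \in M$, there is $b \in N$ with $\tp^M(\bar{a}a) = \tp^N(\bar{b}b)$. For this, use that $M$ is atomic: there is a formula $\varphi(\bar{x}, y)$ isolating $\tp^M(\bar{a}a)$ in $T$. Since $M \models \varphi(\bar{a}, a)$, we have $\exists y\, \varphi(\bar{x}, y) \in \tp^M(\bar{a}) = \tp^N(\bar{b})$, so we may pick $b \in N$ with $N \models \varphi(\bar{b}, b)$. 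Because $\varphi$ isolates a complete type over $\emptyset$ in $T$ and $N \models T$, every formula consistent with $\varphi$ over $T$ is implied by $\varphi$, so $\tp^N(\bar{b}b) \supseteq \tp^M(\bar{a}a)$, and by completeness of both types they coincide.

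For the back direction (extending the range at odd stages), apply the same argument with the roles of $M$ and $N$ swapped, which requires $N$ to be atomic as well. This is precisely why we need the atomicity hypothesis on both models and not just on one.

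No serious obstacle is expected: the only place where care is needed is the appeal to completeness of $T$ to argue that $\varphi$ isolates the same complete type in the theory, regardless of which model we realize it in. (This is where the hypothesis that $T$ is \emph{complete} — rather than just a theory with atomic models — is used, so that $\tp^M(\bar{a}a)$ and $\tp^N(\bar{b}b)$ are both determined by the same consistent completion containing $\varphi$.)
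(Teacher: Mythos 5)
Your proof is correct and is precisely the standard back-and-forth argument that the paper delegates to the cited reference (Marker, Theorem 4.2.8 and Corollary 4.2.16); the paper states this as a Fact without giving its own proof. The key extension step---that the formula $\varphi$ isolating $\tp\left(\bar{a}a/\emptyset\right)$ in the complete theory $T$ forces any realization in $N$ to have the same complete type---is handled correctly, as is the observation that atomicity of both models is needed, one for each direction of the back-and-forth.
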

\begin{thm}
\label{thm:main translation}Suppose that $P$ is a forcing notion,
$L$ a language which $P$ forces to be countable, and $\tauname$
a $P$-name for an $L$-structure with universe $\omega$ such that
$\left(P,L,\tauname\right)$ has the isomorphism property. Then the
following are equivalent:
\begin{enumerate}
\item For some $M\in\Vv$, $P\forces\tauname\cong\check{M}$. 
\item $T_{\ScottPsi_{\tauname}}^{co}$ has an atomic model.
\end{enumerate}
\end{thm}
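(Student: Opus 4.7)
The plan is to use the bijection $H_{\ScottPsi_{\tauname}}$ from Lemma \ref{lem:languae,theory,types} to shuttle between countable $L$-structures in $\Vv$ satisfying $\ScottPsi_{\tauname}$ and countable atomic models of $T_{\ScottPsi_{\tauname}}^{co}$ in $\Vv$, leveraging Proposition \ref{prop:Gpsi is atomic} and the absoluteness facts in Remarks \ref{rem:absoluteness of G_psi} and \ref{rem:abs of completion}.

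For $(1)\Rightarrow(2)$, starting from $M\in\Vv$ with $P\forces\tauname\cong\check{M}$, in any generic extension $\Vv[G]$ the structure $\tauname[G]$ satisfies $\ScottPsi_{\tauname}$ by Proposition \ref{prop:Scott sentence in V}, hence so does $M$. Satisfaction of an $L_{\infty,\omega}$-sentence by a countable structure is absolute between transitive models of ZFC (induction on the rank of the sentence), so $M\models\ScottPsi_{\tauname}$ already in $\Vv$. Then $H_{\ScottPsi_{\tauname}}(M)\in\Vv$ is a countable model of $T_{\ScottPsi_{\tauname}}$ omitting $\Gamma_{\ScottPsi_{\tauname}}$, and by Proposition \ref{prop:Gpsi is atomic} it is an atomic model of $T_{\ScottPsi_{\tauname}}^{co}$.

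For $(2)\Rightarrow(1)$, let $N^{*}\in\Vv$ be an atomic model of $T_{\ScottPsi_{\tauname}}^{co}$, which by L\"owenheim--Skolem I may take to be countable. The key step, and the one most worth checking carefully, is that $N^{*}$ omits every $\Sigma_{\Phi}\in\Gamma_{\ScottPsi_{\tauname}}$. Indeed, if some tuple $\bar{a}$ in $N^{*}$ realized $\Sigma_{\Phi}$, atomicity provides a formula $\varphi(\bar{x})$ isolating $\tp^{N^{*}}(\bar{a}/\emptyset)$ in $T_{\ScottPsi_{\tauname}}^{co}$; since $\Sigma_{\Phi}\subseteq\tp^{N^{*}}(\bar{a}/\emptyset)$, the same $\varphi$ isolates $\Sigma_{\Phi}$ in $T_{\ScottPsi_{\tauname}}^{co}$. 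But the inductive construction of $T_{\ScottPsi_{\tauname}}^{co}$ in Remark \ref{rem:abs of completion} puts $\neg\exists\bar{x}\,\varphi(\bar{x})$ into $T_{\ScottPsi_{\tauname}}^{co}$ as soon as such a $\varphi$ appears, contradicting $N^{*}\models\varphi(\bar{a})$. Hence $N^{*}$ omits $\Gamma_{\ScottPsi_{\tauname}}$, and $M:=H_{\ScottPsi_{\tauname}}^{-1}(N^{*})\in\Vv$ is a countable $L$-structure with $M\models\ScottPsi_{\tauname}$.

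To conclude, for any $P$-generic $G$ over $\Vv$ both $M$ and $\tauname[G]$ are countable models of $\ScottPsi_{\tauname}$ in $\Vv[G]$ (absoluteness of satisfaction again for $M$), and since $\ScottPsi_{\tauname}$ is the Scott sentence of $\tauname[G]$ there, Proposition \ref{prop:Scott} gives $M\cong\tauname[G]$ in $\Vv[G]$, whence $P\forces\tauname\cong\check{M}$. The main obstacle is the omission argument in the previous paragraph; the rest is routine bookkeeping with the absoluteness of $\ScottPsi_{\tauname}$, $T_{\ScottPsi_{\tauname}}$, $T_{\ScottPsi_{\tauname}}^{co}$, $\Gamma_{\ScottPsi_{\tauname}}$ and $H_{\ScottPsi_{\tauname}}$ established in Section \ref{sec:Preliminaries}.
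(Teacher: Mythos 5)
Your proof is correct in substance and follows the paper's general strategy (pass through $H_{\ScottPsi_{\tauname}}$ and use absoluteness), but the key step of $(2)\Rightarrow(1)$ is genuinely different. The paper, after shrinking the atomic model $N$ to size $\leq\left|L_{\ScottPsi_{\tauname}}\right|$, simply observes that in $\Vv\left[G\right]$ both $N$ and $H_{\ScottPsi_{\tauname}}\left(\tauname\left[G\right]\right)$ are countable atomic models of the countable complete theory $T_{\ScottPsi_{\tauname}}^{co}$, hence isomorphic by Fact \ref{fac:atomic isomorphic}; it never needs to know that $N$ omits $\Gamma_{\ScottPsi_{\tauname}}$ or that $H_{\ScottPsi_{\tauname}}^{-1}\left(N\right)\models\ScottPsi_{\tauname}$. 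You instead prove directly that an atomic model of $T_{\ScottPsi_{\tauname}}^{co}$ omits every $\Sigma_{\Phi}$ (your isolation argument against the construction in Remark \ref{rem:abs of completion} is correct) and then invoke the defining property of the Scott sentence via Proposition \ref{prop:Scott}. Your route is slightly longer but more self-contained, and it makes explicit why atomicity forces omission of $\Gamma_{\ScottPsi_{\tauname}}$, which the paper's argument leaves implicit.

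One point needs repair: in both directions you assume countability in $\Vv$ where only countability in $\Vv\left[G\right]$ is available. In $(1)\Rightarrow(2)$, the structure $M$ need not be countable in $\Vv$ (for instance if $P$ collapses $\left|M\right|$ to $\aleph_{0}$), so Proposition \ref{prop:Gpsi is atomic} cannot be applied to $M$ inside $\Vv$ and $H_{\ScottPsi_{\tauname}}\left(M\right)$ need not be countable there; the fix is to apply that proposition in $\Vv\left[G\right]$, where $M\cong\tauname\left[G\right]$ is countable, and then use absoluteness of ``being an atomic model'' to pull the conclusion back to $\Vv$, as the paper does. Likewise in $(2)\Rightarrow(1)$, L\"owenheim--Skolem only yields $N^{*}$ of size $\leq\left|L_{\ScottPsi_{\tauname}}\right|$ computed in $\Vv$, which may be uncountable there since $\ScottPsi_{\tauname}$ is only forced to be an $L_{\omega_{1},\omega}$-sentence after forcing; this is harmless, because $L_{\ScottPsi_{\tauname}}$, and hence $N^{*}$ and $M=H_{\ScottPsi_{\tauname}}^{-1}\left(N^{*}\right)$, become countable in $\Vv\left[G\right]$, which is all your final application of Proposition \ref{prop:Scott} requires. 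With these adjustments the argument is complete.
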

\begin{proof}
(1) $\Rightarrow$ (2): By (1), for some $M\in\Vv$, $P\forces\tauname\cong\check{M}$.
Then $M\models\ScottPsi_{\tauname}$, and $H_{\ScottPsi_{\tauname}}\left(M\right)\models T_{\ScottPsi_{\tauname}}^{co}$.
Let $G$ be a generic filter for $P$. Then by Proposition \ref{prop:Gpsi is atomic},
in $\Vv\left[G\right]$, $H_{\ScottPsi_{\tauname}}\left(M\right)$
is an atomic model of $T_{\ScottPsi_{\tauname}}^{co}$, but being
an atomic model is absolute, hence the same is true in $\Vv$. 

(2) $\Rightarrow$ (1): Let $G$ be any generic filter. Then by Proposition
\ref{prop:Gpsi is atomic}, in $\Vv\left[G\right]$, $T_{\ScottPsi_{\tauname}}^{co}$
is countable and $H_{\ScottPsi_{\tauname}}\left(\tauname\left[G\right]\right)$
is an atomic model of $T_{\ScottPsi_{\tauname}}^{co}$. By (2), $T_{\ScottPsi_{\tauname}}^{co}$
has an atomic model $N$ which we can assume has cardinality $\leq\left|L_{\ScottPsi_{\tauname}}\right|=\aleph_{0}^{\Vv\left[G\right]}$
by taking an elementary substructure. Let $M=H_{\ScottPsi_{\tauname}}^{-1}\left(N\right)$.
In $\Vv\left[G\right]$, $N$ is countable, so isomorphic to $H_{\ScottPsi_{\tauname}}\left(\tauname\left[G\right]\right)$
in $\Vv\left[G\right]$ by Fact \ref{fac:atomic isomorphic}, hence
$M\cong\tauname\left[G\right]$.
\end{proof}
Recall that a forcing notion $P$ is {\nice} when the answer to ($\star'$)
is ``yes'' for every $L$ and $\tauname$: whenever $\left(P,L,\tauname\right)$
has the isomorphism property, for some $L$-structure $M\in\Vv$,
$P\forces\tauname\cong\check{M}$. 
\begin{thm}
\label{thm:main translation2}Suppose that $P$ is a forcing notion.
Then the following are equivalent:
\begin{enumerate}
\item $P$ is \nice. 
\item For every complete first order theory $T$ such that the isolated
types are dense in $T$ and $P\forces\left|\check{T}\right|=\aleph_{0}$,
$T$ has an atomic model.
\end{enumerate}
\end{thm}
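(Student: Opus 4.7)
The plan is to reduce each direction to Theorem \ref{thm:main translation}, which already pairs the \nice{} property with the existence of an atomic model of the particular theory $T_{\ScottPsi_{\tauname}}^{co}$. The remaining content of (2) concerns arbitrary complete theories $T$, so the argument consists of (a) building an appropriate $\tauname$ out of $T$ (or vice versa) and (b) checking that the relevant properties of theories, structures, and atomicity transfer between $\Vv$ and $\Vv^{P}$.

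For (1) $\Rightarrow$ (2), suppose $T$ is a complete first order $L$-theory in $\Vv$ with dense isolated types and $P\forces\left|\check{T}\right|=\aleph_{0}$. In $\Vv^{P}$ the theory $T$ is still countable, complete, and has dense isolated types (the last property being absolute, as it only quantifies over formulas of $L$ and proofs from $T$, all living in $\Vv$), so by Fact \ref{fac:atomic equivalent} it admits a countable atomic model. Pick a $P$-name $\tauname$ for such a model with universe $\omega$. By Fact \ref{fac:atomic isomorphic}, in $\Vv^{P\times P}$ both $\tauname_{1}$ and $\tauname_{2}$ are countable atomic models of the same complete theory $T\in\Vv$, hence isomorphic, so $(P,L,\tauname)$ has the isomorphism property. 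Hypothesis (1) then yields $M\in\Vv$ with $P\forces\tauname\cong\check{M}$, whence $M\models T$, and atomicity of $M$ is absolute (an isolating formula for $\tp(\bar{a}/\emptyset)$ that exists in $\Vv[G]$ already exists in $\Vv$, since $T$, the formulas, and the tuples of $M$ are in $\Vv$). So $M$ is an atomic model of $T$ in $\Vv$, giving (2).

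For (2) $\Rightarrow$ (1), suppose $(P,L,\tauname)$ has the isomorphism property; by Theorem \ref{thm:main translation} it suffices to produce an atomic model of $T_{\ScottPsi_{\tauname}}^{co}$ in $\Vv$. By Proposition \ref{prop:Scott sentence in V} we have $\ScottPsi_{\tauname}\in\Vv$, and by Remarks \ref{rem:absoluteness of G_psi} and \ref{rem:abs of completion} the theory $T_{\ScottPsi_{\tauname}}^{co}$ lies in $\Vv$ as well. Now in $\Vv[G]$, Proposition \ref{prop:Gpsi is atomic} says $H_{\ScottPsi_{\tauname}}(\tauname[G])$ is a countable atomic model of $T_{\ScottPsi_{\tauname}}^{co}$; in particular $T_{\ScottPsi_{\tauname}}^{co}$ is complete, $P\forces\left|\check{T}_{\ScottPsi_{\tauname}}^{co}\right|=\aleph_{0}$, and its isolated types are dense (Fact \ref{fac:atomic equivalent}). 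Each of these properties is absolute down to $\Vv$, so hypothesis (2) produces an atomic model of $T_{\ScottPsi_{\tauname}}^{co}$ in $\Vv$, and Theorem \ref{thm:main translation} concludes (1).

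The main obstacle I anticipate is bookkeeping the several absoluteness claims between $\Vv$ and $\Vv^{P}$: atomicity of a structure, completeness of a theory, and density of the isolated types. Each reduces to statements over formulas and finite proofs in $\Vv$, so all are routine, but one must be careful about which model witnesses which property (e.g., building $\tauname$ requires working in $\Vv^{P}$, while applying hypothesis (2) requires the theory $T_{\ScottPsi_{\tauname}}^{co}$ to be in $\Vv$). With these verifications in place the whole proof is a short diagram chase through Theorem \ref{thm:main translation} and Facts \ref{fac:atomic equivalent} and \ref{fac:atomic isomorphic}.
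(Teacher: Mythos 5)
Your proposal is correct and follows essentially the same route as the paper: for (1)$\Rightarrow$(2) take a name for a countable atomic model of $T$, use Fact \ref{fac:atomic isomorphic} to get the isomorphism property, and transfer atomicity of the resulting $M\in\Vv$ back down by absoluteness; for (2)$\Rightarrow$(1) reduce via Theorem \ref{thm:main translation} to producing an atomic model of $T_{\ScottPsi_{\tauname}}^{co}$, whose countability, completeness and density of isolated types are read off from Proposition \ref{prop:Gpsi is atomic} in $\Vv[G]$ and pulled down to $\Vv$. The extra absoluteness bookkeeping you spell out is exactly what the paper leaves implicit.
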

\begin{proof}
(1) $\Rightarrow$ (2): Suppose $T$ is a complete first order theory
in a language $L$ in which the isolated types are dense, and $P\forces\left|T\right|=\aleph_{0}$.
Let $\tauname$ be a name for a countable atomic model of $T$ (exists
by Fact \ref{fac:atomic equivalent}). Then by Fact \ref{fac:atomic isomorphic},
$\left(P,L,\tauname\right)$ has the isomorphism property. By (1),
for some $M\in\Vv$, $P\forces\tauname\cong\check{M}$. Let $G$ be
a generic filter for $P$. Then in $\Vv\left[G\right]$, $M$ is an
atomic model of $T$, but being an atomic model of $T$ is absolute,
hence the same is true in $\Vv$. 

(2) $\Rightarrow$ (1): Suppose that $\left(P,L,\tauname\right)$
has the isomorphism property. By Theorem \ref{thm:main translation},
it is enough to show that $T_{\ScottPsi_{\tauname}}^{co}$ has an
atomic model, so by (2) it is enough to show that the isolated types
are dense. Let $G$ be a generic filter. Then by Proposition \ref{prop:Gpsi is atomic},
in $\Vv\left[G\right]$, $T_{\ScottPsi_{\tauname}}^{co}$ is complete
and countable and $H_{\ScottPsi_{\tauname}}\left(\tauname\left[G\right]\right)$
is an atomic model of $T_{\ScottPsi_{\tauname}}^{co}$. Hence by fact
\ref{fac:atomic equivalent}, the isolated types are dense in $T_{\ScottPsi_{\tauname}}^{co}$.
But this is an absolute property, hence the same is true in $\Vv$. 
\end{proof}

\section{\label{sec:existence of atomic models}On the existence of atomic
models}

In Section \ref{sub:A-criterion:-collapsing}, we give a general criterion
for when $P$ is \nice. In Sections \ref{sub:Totally-transcendental-theories}
and \ref{sub:Superstable-theories} we investigate when an atomic
model exist under classification theoretic assumption.

\subsection{\label{sub:A-criterion:-collapsing}A criterion: collapsing $\aleph_{2}$
to $\aleph_{0}$}

Having translated ($\star'$) to a question on the existence of atomic
models in Theorem \ref{thm:main translation2}, we can now start to
provide some answers. In Corollary \ref{cor:not-collaps-ok}, we provide
a positive answer (i.e., $P$ is \nice), provided that $P$ does
not collapse $\aleph_{2}$ to $\aleph_{0}$. Conversely, in Corollary
\ref{cor:negative answer} we prove that if $P$ does collapse $\aleph_{2}$
to $\aleph_{0}$ then $P$ is not \nice. 

First we note that by Fact \ref{fac:atomic equivalent} the following
is immediate. 
\begin{cor}
If $P$ is a forcing notion that does not collapse $\aleph_{1}$ then
$P$ is \nice.
\end{cor}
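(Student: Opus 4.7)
The plan is to combine Theorem \ref{thm:main translation2} with Fact \ref{fac:atomic equivalent}. By the theorem, it suffices to show that every complete first-order theory $T \in \Vv$ in a language $L \in \Vv$ such that the isolated types are dense in $T$ and $P \forces |\check{T}| = \aleph_{0}$ admits an atomic model (in $\Vv$). So let such a $T$ be given.

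The key observation is that since $P$ does not collapse $\aleph_{1}$, we must already have $|T|^{\Vv} \leq \aleph_{0}$. Indeed, were $|T|^{\Vv} \geq \aleph_{1}$, then in any generic extension $\Vv[G]$ there would be an injection from $\aleph_{1}^{\Vv}$ into $T$, and since $P \forces |\check{T}| = \aleph_{0}$, this would witness $|\aleph_{1}^{\Vv}|^{\Vv[G]} = \aleph_{0}$, contradicting that $P$ does not collapse $\aleph_{1}$. Thus $T$ is already countable in the ground model.

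Density of isolated types in $T$ is a first-order property of $T$ (a statement about which finite conjunctions of formulas and negations of formulas are consistent with $T$), so it holds in $\Vv$ exactly when it holds in any generic extension; in particular, it holds in $\Vv$. Applying Fact \ref{fac:atomic equivalent} to the countable complete theory $T$ inside $\Vv$, we obtain a countable atomic model of $T$ in $\Vv$. This verifies clause (2) of Theorem \ref{thm:main translation2} and therefore shows that $P$ is \nice.

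There is no real obstacle here beyond making the absoluteness observations explicit: the density-of-isolated-types property is expressible as a statement about $T$ alone (a schema asserting that every consistent formula has a consistent strengthening that decides every formula), and being countable is the only hypothesis of Fact \ref{fac:atomic equivalent} that could in principle fail to transfer between $\Vv$ and $\Vv[G]$ — and non-collapse of $\aleph_{1}$ is exactly what rules this out.
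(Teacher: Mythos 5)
Your proof is correct and is exactly the argument the paper intends: the paper states this corollary as ``immediate'' from Fact \ref{fac:atomic equivalent} via Theorem \ref{thm:main translation2}, and your write-up simply makes explicit the one non-trivial point, namely that non-collapse of $\aleph_{1}$ forces $T$ to be countable already in $\Vv$. (Your absoluteness remark about density of isolated types is harmless but unnecessary, since clause (2) of Theorem \ref{thm:main translation2} already hypothesizes that density in $\Vv$.)
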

For completeness we provide a proof of the following proposition,
which is really an adaptation of \cite[Chapter IV, Theorem 5.5]{Sh:c}.
This theorem was also proved independently by Julia Knight \cite[Theorem 1.3]{knight}
and David Kueker \cite[Page 168]{kueker}.
\begin{prop}
\label{prop:Aleph1}Suppose $\left|T\right|=\aleph_{1}$ and the isolated
types are dense in $T$. Then $T$ has an atomic model. \end{prop}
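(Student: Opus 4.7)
The plan is to build an atomic model of cardinality $\aleph_1$ as a continuous increasing chain $M = \bigcup_{\alpha < \omega_1} M_\alpha$ of countable atomic models. The setup is to filtrate $T$ as a continuous increasing union $T = \bigcup_{\alpha < \omega_1} T_\alpha$, where each $T_\alpha$ is a complete theory in a countable sublanguage $L_\alpha$ and $L = \bigcup_\alpha L_\alpha$. The critical preparatory step is to arrange, using density of isolated types in $T$, that the isolated types are dense in each $T_\alpha$: whenever $\varphi(\bar{x}) \in L_\alpha$ is $T$-consistent, density in $T$ yields some $L$-formula $\theta_\varphi(\bar{x})$ isolating a complete $T$-type with $T \vdash \theta_\varphi \to \varphi$; we throw such $\theta_\varphi$'s into some later $L_\beta$, iterate this closure $\omega_1$ times, and pass to a club of $\alpha$ at which the internal density property holds.

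Next, I would build the $M_\alpha$'s by induction on $\alpha < \omega_1$ so that $M_\alpha$ is a countable atomic model of $T_\alpha$ and, for $\beta < \alpha$, $M_\beta$ is an $L_\beta$-elementary substructure of $M_\alpha \upharpoonright L_\beta$. The base case uses Fact~\ref{fac:atomic equivalent} applied to $T_0$. Limits are handled by continuity of both the language filtration and the chain, so $\bigcup_{\beta < \alpha} M_\beta$ is an $L_\alpha$-atomic model of $T_\alpha$. For the successor, given $M_\alpha$ atomic for $T_\alpha$, every tuple $\bar{a} \in M_\alpha^{<\omega}$ has its $T_\alpha$-type isolated by some $L_\alpha$-formula $\theta_{\bar{a}}$, and we need each such $\theta_{\bar{a}}$ to extend to an $L_{\alpha+1}$-formula isolating a complete $T_{\alpha+1}$-type, uniformly across all tuples. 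This is arranged by strengthening the closure used above: for every $\theta \in L_\alpha$ isolating a complete $T_\alpha$-type and every $T$-consistent $\psi(\bar{x},\bar{y}) \in L_{\alpha+1}$, an isolating refinement of $\theta \wedge \psi$ is required to lie in some $L_\gamma$ with $\gamma$ soon after $\alpha$. With this in hand, the elementary $L_\alpha$-diagram of $M_\alpha$ together with $T_{\alpha+1}$ has dense isolated types in its natural completion, and Fact~\ref{fac:atomic equivalent} produces a countable atomic $M_{\alpha+1}$ extending $M_\alpha$ as required.

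The main obstacle is precisely this uniform lifting in the successor step: ensuring the language closure is rich enough that \emph{every} formula isolating a complete $T_\alpha$-type refines to one isolating a complete $T_{\alpha+1}$-type, while keeping each $L_\alpha$ countable (so that Fact~\ref{fac:atomic equivalent} can be applied locally to $T_\alpha$). Standard bookkeeping, enumerating over $\omega_1$ all the relevant pairs (isolating $L_\alpha$-formula, $L_\beta$-formula to be refined) and interleaving the closures, accomplishes this. Having built the chain, $M = \bigcup_{\alpha < \omega_1} M_\alpha$ is an $L$-structure of size $\aleph_1$, and any finite tuple from $M$ lies in some $M_\alpha$, hence has an $L_\alpha$-isolated $T_\alpha$-type; by the closure property, the isolating formula already isolates the complete $T$-type of the tuple, so $M$ is atomic.
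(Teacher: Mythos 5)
Your overall architecture is reasonable and genuinely different from the paper's (the paper works with a single $\aleph_{1}$-saturated model of $T$ in the full language and builds an atomic Tarski--Vaught subset of size $\aleph_{1}$ point by point, rather than filtering $L$ into countable sublanguages), but the successor step, which is the entire content of the proposition, is not proved and the justification you give for it is false. You assert that ``the elementary $L_{\alpha}$-diagram of $M_{\alpha}$ together with $T_{\alpha+1}$ has dense isolated types in its natural completion,'' so that Fact \ref{fac:atomic equivalent} hands you $M_{\alpha+1}$. That would produce a model atomic \emph{over} $M_{\alpha}$, which is more than you need and in general does not exist: isolated types need not be dense over a countable atomic set. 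For a concrete failure, take the theory of an infinitely branching tree of height $\omega$ together with a sort of branches (with projections to each level, and no constraint on which branches are realized), and let $A$ be a countable atomic set consisting of all nodes of a copy of $\omega^{<\omega}$. Every complete type of a branch over $A$ either follows a path of $A$ forever or asserts that the branch leaves $A$ at some level by avoiding infinitely many named nodes; either way it is non-isolated, so the consistent formula ``$x$ is a branch'' has no isolated refinement over $A$, even though every finite subtuple of $A$ has isolated type and isolated types are dense over every finite tuple. Your closure conditions only buy you density of isolated types over each \emph{finite} tuple of $M_{\alpha}$; the gap between that and density over the whole countable set $M_{\alpha}$ is exactly the difficulty, and no bookkeeping of formulas in the language filtration closes it. The same unproved strengthening is silently used at limit stages: without it, the isolating formulas $\theta_{\gamma}$ for a fixed tuple need not stabilize as $\gamma$ increases, and the union need not be atomic.

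What is actually true, and what the paper proves, is the weaker one-point extension lemma: if $A$ is a countable atomic set, $\bar{b}\in A^{<\omega}$ and $\exists x\varphi\left(x,\bar{b}\right)$ holds, then there is $c$ with $\varphi\left(c,\bar{b}\right)$ such that $A\cup\left\{ c\right\} $ is again atomic \emph{over the empty set} --- while $\tp\left(c/A\right)$ itself is typically non-isolated. The proof enumerates $A=\set{a_{i}}{i<\omega}$ and builds a coherent decreasing chain of formulas $\psi_{i}\left(x,\bar{z}_{i}\right)$, each isolating a complete type and compatible with the isolated type of $\bar{a}_{i}=\left(a_{0},\ldots,a_{i-1}\right)$, and then realizes the countable type $\set{\psi_{i}\left(x,\bar{a}_{i}\right)}{i<\omega}$ inside an $\aleph_{1}$-saturated model. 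This is the ingredient your successor step needs (applied $\omega$ many times to turn $M_{\alpha}$ into a countable model of $T_{\alpha+1}$ that is atomic over $\emptyset$, not over $M_{\alpha}$), and without it your argument does not go through. If you supply this lemma, your language-filtration scheme can be made to work, but at that point it is an elaboration of, not an alternative to, the paper's proof.
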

\begin{proof}
Let $M\models T$ be $\aleph_{1}$-saturated. Construct an atomic
set $N=\set{b_{i}}{i<\omega_{1}}\subseteq M$ such that for any formula
$\varphi\left(x,\bar{b}\right)$, with $\bar{b}$ a finite tuple from
$N$, if $M\models\exists x\varphi\left(x,\bar{b}\right)$ then for
some $c\in N$, $M\models\varphi\left(c,\bar{b}\right)$. Then $N$
is an atomic model of $T$. By an easy book-keeping argument, it is
enough to show that if $A\subseteq M$ is a countable atomic set,
$\bar{b}$ is a finite tuple from $A$, and $M\models\exists x\varphi\left(x,\bar{b}\right)$,
then for some $c\in M$, $M\models\varphi\left(c,\bar{b}\right)$
and $A\cup\left\{ c\right\} $ is atomic. 

For a consistent formula $\psi\left(\bar{x}\right)$, choose a consistent
formula $\theta_{\psi}\left(\bar{x}\right)$ which isolates a complete
type and implies $\psi$. 

Enumerate $A=\set{a_{i}}{i<\omega}$, and assume that $\bar{b}=\left(a_{0},\ldots,a_{n-1}\right)$.
For $i<\omega$, let $\bar{a}_{i}=\left(a_{0},\ldots,a_{i-1}\right)$,
and let $\theta_{i}\left(\bar{z}_{i}\right)$ isolate $\tp\left(\bar{a}_{i}\right)$.
Construct a sequence of formulas $\psi_{i}\left(x,\bar{z}_{i}\right)$
such that: for $i\geq n$: $\psi_{n}\left(x,\bar{z}_{n}\right)\to\varphi\left(x,\bar{z}_{n}\right)$;
$\psi_{i}$ is consistent; $T\models\psi_{i+1}\to\psi_{i}$; $\psi_{i}$
isolates a complete type and $\bar{a}_{i}\models\exists x\psi_{i}\left(x,\bar{z}_{i}\right)$.
The construction: $\psi_{n}=\theta_{\varphi\left(x,\bar{z}_{n}\right)\land\theta_{n}\left(\bar{z}_{n}\right)}$
and $\psi_{i+1}=\theta_{\psi_{i\left(x,\bar{z}_{i}\right)}\land\theta_{i+1}\left(\bar{z}_{i+1}\right)}$.
Finally, $\set{\psi_{i}\left(x,\bar{a}_{i}\right)}{n\leq i<\omega}$
is consistent, so let $c\in M$ satisfy this type. \end{proof}
\begin{cor}
\label{cor:not-collaps-ok}If $P$ is a forcing notion that does not
collapse $\aleph_{2}$ to $\aleph_{0}$ then $P$ is \nice. \end{cor}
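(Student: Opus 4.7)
The plan is to combine Theorem \ref{thm:main translation2} with Proposition \ref{prop:Aleph1} via a simple cardinality count. By Theorem \ref{thm:main translation2}, to show that $P$ is {\nice} it suffices to prove that every complete first order theory $T\in\Vv$ with isolated types dense and $P\forces\left|\check{T}\right|=\aleph_0$ already has an atomic model in $\Vv$.

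So, fix such a theory $T$, in some language $L\in\Vv$. I would first extract a cardinality bound on $\left|T\right|$ in $\Vv$. Since $P\forces\left|\check{T}\right|=\aleph_0$, there is a $P$-name $\dot f$ forced to be a bijection from $\check T$ onto $\omega$, so $P$ collapses $\left|T\right|^{\Vv}$ to $\aleph_0$. By hypothesis $P$ does not collapse $\aleph_2$ to $\aleph_0$, so we must have $\left|T\right|^{\Vv}\leq\aleph_1$.

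Now the conclusion is handed to us by the results proved above. If $\left|T\right|\leq\aleph_0$, then since isolated types are dense in $T$, Fact \ref{fac:atomic equivalent} gives a (countable) atomic model of $T$. If $\left|T\right|=\aleph_1$, then Proposition \ref{prop:Aleph1} applies directly and produces an atomic model of $T$ in $\Vv$. Either way, $T$ has an atomic model, and Theorem \ref{thm:main translation2} concludes that $P$ is \nice.

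The only subtlety is the cardinality step: one needs to notice that $P\forces\left|\check T\right|=\aleph_0$ is literally the statement that $P$ collapses the ground-model cardinal $\left|T\right|$ to $\aleph_0$, so the assumption that $\aleph_2$ is not collapsed forces $\left|T\right|\leq\aleph_1$. Everything else is an invocation of results already stated in the excerpt; there is no new combinatorial or model-theoretic content to produce.
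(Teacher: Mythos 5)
Your proof is correct and is exactly the argument the paper intends: the corollary is stated immediately after Proposition \ref{prop:Aleph1} precisely so that it follows from Theorem \ref{thm:main translation2} together with the observation that $P\forces\left|\check{T}\right|=\aleph_{0}$ means $P$ collapses $\left|T\right|^{\Vv}$ to $\aleph_{0}$, forcing $\left|T\right|\leq\aleph_{1}$ under the hypothesis. The case split ($\left|T\right|\leq\aleph_{0}$ via Fact \ref{fac:atomic equivalent}, $\left|T\right|=\aleph_{1}$ via Proposition \ref{prop:Aleph1}) is the right way to finish.
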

\begin{fact}
\cite{Laskowski-Shelah}\label{fact:Shelah-Laskowski} There is a
complete first order theory $T$ with a sort $V$ whose elements form
an indiscernible set in any model $M$ of $T$, such that for any
set $A\subseteq V^{M}$, the isolated types in $T\left(A\right)$
are dense but if $\left|A\right|\geq\aleph_{2}$, $T\left(A\right)$
has no atomic model.\end{fact}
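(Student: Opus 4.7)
My plan is to follow the construction of Laskowski and Shelah \cite{Laskowski-Shelah}. I would exhibit an explicit multi-sorted theory $T$ with a main sort $V$ and auxiliary sorts designed so that (a) $V$ is an indiscernible set in every model, (b) the isolated types in $T(A)$ are dense for every $A\subseteq V^{M}$, and (c) no atomic model of $T(A)$ exists when $|A|\geq\aleph_{2}$. The heart of the construction is choosing the auxiliary sorts to encode, for each finite tuple $\bar{a}$ from $V$, a space of ``labels'' rich enough that any consistent formula can be completed to an isolated type by fixing a label, yet constrained enough that globally coherent labels for $\aleph_{2}$-many tuples cannot coexist.

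To arrange (a), I would build indiscernibility of $V$ directly into the axioms, requiring the theory to be a suitable generic amalgam so that every permutation of a finite tuple of distinct elements of $V$ extends to an automorphism of the whole structure; completeness of $T$ and the classification of types over finite tuples from $V$ then follow by a standard back-and-forth argument. For (b), given a consistent formula $\varphi(\bar{x},\bar{a})$ over $A$, one verifies that by specifying finitely many coordinates in the auxiliary sorts one obtains a complete type over $A$ isolated by a single formula that refines $\varphi$; this density is essentially built into the axioms and reduces to a routine consistency check.

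The heart of the proof, and the main obstacle, is (c). I would argue by contradiction: an atomic model $N\supseteq A$ must, for each finite tuple $\bar{a}\in A^{<\omega}$, contain a canonical isolating witness whose isolating formula refers only to parameters from a countable ``support'' $s(\bar{a})\subseteq A$. Applying a $\Delta$-system lemma to the $\aleph_{2}$-many supports $\set{s(\bar{a})}{\bar{a}\in A^{<\omega}}$, one extracts an uncountable subfamily with common root $\Delta$ but pairwise disjoint complements; the axioms of $T$ will have been engineered so that the corresponding witnesses are forced to be pairwise incompatible, contradicting their simultaneous realisation in $N$.

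The delicate point --- and the most subtle part of the design --- is that the theory must be tuned so that the incompatibility appears precisely at $\aleph_{2}$ and not at $\aleph_{1}$. This is forced on us by Proposition \ref{prop:Aleph1}, since $|T(A)|=\aleph_{1}$ when $|A|=\aleph_{1}$ and so an atomic model of $T(A)$ must exist; hence the axioms must allow any family of $\leq\aleph_{1}$ countable supports to be extended coherently. Achieving this exact threshold is the technical crux, and presumably uses the combinatorics of almost-disjoint families of countable sets at $\aleph_{2}$ in a way unavailable at $\aleph_{1}$.
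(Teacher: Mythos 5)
The paper gives no proof of this statement at all: it is quoted as a known result of Laskowski and Shelah (the cited paper is precisely where the theory is constructed), so there is nothing internal to compare your argument against. Judged on its own terms, your proposal is not a proof but a plan, and the plan defers every substantive step. You never write down the theory $T$: the auxiliary sorts, the ``labels'', and the axioms are all described only by the properties you need them to have (``the axioms of $T$ will have been engineered so that the corresponding witnesses are forced to be pairwise incompatible''). Since the statement is an existence claim, and since all three requirements (indiscernibility of $V$, density of isolated types in every $T(A)$, and failure of atomicity exactly from $\aleph_{2}$ onward) pull against each other, asserting that a theory with all of them can be engineered is assuming the conclusion. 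The entire content of the Fact lives in the construction you omit.

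There is also a concrete reason to doubt that the strategy you sketch, even if fleshed out, would yield the stated threshold. Your non-existence argument assigns to each tuple a countable support and applies a $\Delta$-system lemma to $\aleph_{2}$ many countable sets. The $\Delta$-system lemma for families of countable sets at $\aleph_{2}$ requires $\aleph_{1}^{\aleph_{0}}<\aleph_{2}$, i.e.\ CH; in ZFC that argument only kicks in at $\left(2^{\aleph_{0}}\right)^{+}$. This is exactly the distinction the paper is careful about: its own Example \ref{exa:superstable-continuum plus} produces a (superstable) theory failing to have atomic models over sets of size $\left(2^{\aleph_{0}}\right)^{+}$ by elementary means, whereas the point of Fact \ref{fact:Shelah-Laskowski} is that Laskowski and Shelah get the optimal bound $\aleph_{2}$ (optimal by Proposition \ref{prop:Aleph1}) outright. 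Their argument does not run through a $\Delta$-system of countable supports in the way you describe; reproducing the $\aleph_{2}$ bound is the genuinely hard part, and your sketch explicitly concedes it (``presumably uses the combinatorics of almost-disjoint families\ldots''). So the gap is twofold: no construction is given, and the one combinatorial mechanism you do name would not, as stated, prove the theorem at $\aleph_{2}$.
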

\begin{cor}
\label{cor:negative answer}If $P$ collapses $\aleph_{2}$ to $\aleph_{0}$
then $P$ is not \nice. 

In conclusion, $P$ is {\nice} iff $P$ does not collapse $\aleph_{2}$
to $\aleph_{0}$. 
\end{cor}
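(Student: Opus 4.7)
The plan is to apply Theorem \ref{thm:main translation2} directly: to show $P$ is not \nice, it suffices to exhibit in $\Vv$ a complete first order theory $T^*$ whose isolated types are dense and whose cardinality $P$ forces to $\aleph_0$, yet which has no atomic model in $\Vv$. Combined with Corollary \ref{cor:not-collaps-ok}, this yields the ``iff'' conclusion.

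The theory $T^*$ is built by invoking Fact \ref{fact:Shelah-Laskowski}: take the theory $T$ and sort $V$ provided there. Working in $\Vv$, fix $M \models T$ and a set $A \subseteq V^M$ with $|A| = \aleph_2^{\Vv}$, and set $T^* = \Th\left((M,a)_{a \in A}\right)$, the complete theory of $M$ in the language enriched by a constant for each element of $A$. By the fact, the isolated types of $T^*$ are dense, while (since $|A| \geq \aleph_2$) $T^*$ has no atomic model in $\Vv$. Density of isolated types is a syntactic property of $T^*$, so it is preserved in any extension of $\Vv$; and ``atomic model exists in $\Vv$'' is precisely the clause of Theorem \ref{thm:main translation2} we are falsifying.

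For the cardinality condition, in $\Vv$ we have $|T^*| = \aleph_2^{\Vv}$, but since by hypothesis $P$ collapses $\aleph_2^{\Vv}$ to $\aleph_0$, we conclude $P \forces |\check{T^*}| = \aleph_0$. Thus $T^*$ satisfies the hypotheses of clause (2) of Theorem \ref{thm:main translation2} but violates its conclusion, so $P$ is not \nice.

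The only nontrivial ingredient is Fact \ref{fact:Shelah-Laskowski} itself, which produces a theory whose isolated types are dense over arbitrary parameter sets but which ceases to have an atomic model as soon as the parameter set reaches size $\aleph_2$. Once this is cited from \cite{Laskowski-Shelah}, the rest is a short bookkeeping argument via the translation from Section \ref{sec:Translating-the-question}.
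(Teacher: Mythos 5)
Your proposal is correct and is exactly the argument the paper intends: the corollary is stated immediately after Fact \ref{fact:Shelah-Laskowski} precisely so that one takes $T(A)$ with $\left|A\right|=\aleph_{2}$, notes that the collapse forces $\left|\check{T(A)}\right|=\aleph_{0}$ while $T(A)$ has dense isolated types but no atomic model in $\Vv$, and concludes via Theorem \ref{thm:main translation2}; the ``iff'' then follows from Corollary \ref{cor:not-collaps-ok}. The paper leaves this unwritten, and your reconstruction fills it in faithfully.
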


\subsection{\label{sub:Totally-transcendental-theories}Totally transcendental
theories}
\begin{defn}
Recall that a complete first order theory $T$ is called\emph{ totally
transcendental }if there is no sequence of formulas $\sequence{\varphi_{s}\left(\bar{x},\bar{y}_{s}\right)}{s\in2^{<\omega}}$
such that in some model $M$ of $T$ there are tuples $\sequence{\bar{a}_{s}}{s\in2^{<\omega}}$
such that for each $\eta\in2^{\omega}$ the type $\set{\varphi_{\eta\upharpoonright n}\left(\bar{x},\bar{a}_{\eta\upharpoonright n}\right)^{\eta\left(n\right)}}{n<\omega}$
is consistent (where $\varphi^{0}=\neg\varphi$, $\varphi^{1}=\varphi$).\end{defn}
\begin{fact}
\label{fac:tt has atomic}\cite[Lemma 5.3.4, Corollary 5.3.7]{TentZiegler}
If $T$ is totally transcendental, then $T$ has an atomic model. 
\end{fact}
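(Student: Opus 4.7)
The plan is to exploit Morley rank. Since $T$ is totally transcendental, every consistent formula $\varphi(\bar{x})$ admits an ordinal-valued Morley rank $\operatorname{MR}(\varphi)$ and a finite Morley degree $\operatorname{Md}(\varphi)$. The heart of the argument is to establish that the isolated types are dense in $T$. Given a consistent formula $\varphi(\bar{x})$, choose a consistent $\theta(\bar{x})$ with $T\vdash\theta\to\varphi$ whose pair $(\operatorname{MR}(\theta),\operatorname{Md}(\theta))$ is lexicographically minimal; such a $\theta$ exists because ranks are ordinals and degrees are positive integers. Minimality forces $\operatorname{Md}(\theta)=1$, since otherwise $\theta$ would split as $\theta_1\vee\theta_2$ with the same rank and strictly smaller degree, each still implying $\varphi$. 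Now for an arbitrary formula $\psi(\bar{x})$, if both $\theta\wedge\psi$ and $\theta\wedge\neg\psi$ had rank $\operatorname{MR}(\theta)$, then $\operatorname{Md}(\theta)\geq 2$ by rank additivity, contradicting degree $1$. So one of the two conjuncts has strictly smaller rank; but it still implies $\varphi$, and by minimality must be inconsistent. Hence $T\vdash\theta\to\psi$ or $T\vdash\theta\to\neg\psi$, meaning $\theta$ isolates a complete type refining $\varphi$.

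If $|T|\leq\aleph_0$ this already yields an atomic model via Fact~\ref{fac:atomic equivalent}. For uncountable $T$ one constructs the model by a transfinite Henkin-style recursion inside a monster model: at stage $\alpha$ one adjoins a new element realizing an isolated type over the previously enumerated tuple, chosen so as to witness the next existential formula on a bookkeeping list. Since total transcendence is preserved under naming parameters, the density argument above applies equally in every expansion $T(A)$ by constants for an atomic parameter set $A$; combined with the fact that extending an atomic tuple by an element realizing an isolated type over it produces an atomic tuple (transitivity of isolation over atomic parameters), this keeps the enumerated set atomic over $\emptyset$ at every stage. Atomicity passes to unions at limit stages because isolation is finitary.

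The main obstacle I anticipate is the limit-stage bookkeeping in the uncountable case: one must simultaneously preserve atomicity, accumulate enough Henkin witnesses for the resulting substructure to be elementary, and control cardinality. This is handled by standard iterated elementary chain techniques, which is why the paper cites the result rather than proving it, and I would appeal to the presentation in \cite{TentZiegler} for those details. The genuinely essential use of total transcendence is the rank-and-degree minimization above, which alone forces density of isolated types; everything downstream is standard model-theoretic bookkeeping.
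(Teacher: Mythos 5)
First, a point of comparison: the paper does not prove this statement at all --- it is quoted as a Fact from Tent and Ziegler --- so there is no internal proof to measure you against. Your outline is the standard one (and, as far as the citation suggests, essentially the one in the quoted source): use Morley rank and degree to show that the isolated types are dense over every parameter set, then build a constructible model by transfinitely adjoining realizations of isolated types, relying on transitivity of isolation, the finitary character of atomicity at limit stages, and Tarski--Vaught bookkeeping. That architecture is correct.

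There is, however, one genuine error in your density argument: the claim that lexicographic minimality of $\left(\operatorname{MR}\left(\theta\right),\operatorname{Md}\left(\theta\right)\right)$ forces $\operatorname{Md}\left(\theta\right)=1$. The decomposition of a formula of degree $d\geq2$ into pieces of the same rank and smaller degree in general requires parameters, so the pieces $\theta_{1},\theta_{2}$ need not be formulas of $L$ and hence need not be competitors in your minimization, which (as it must be, given the definition of density of isolated types in the paper) ranges over parameter-free formulas. Indeed the conclusion is simply false: in the theory of an equivalence relation with exactly two infinite classes, the only consistent parameter-free formula in one free variable is $x=x$, which has Morley rank $1$ and degree $2$; it is the minimal $\theta$ for $\varphi=\left(x=x\right)$ and it does isolate the unique $1$-type, yet its degree is not $1$. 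Consequently your next step (``\ldots contradicting degree $1$'') has nothing to contradict. The repair uses only ingredients already on the table: if some $L$-formula $\psi$ splits $\theta$ into two consistent pieces, both pieces are consistent $L$-formulas implying $\varphi$, so by minimality each has rank exactly $\operatorname{MR}\left(\theta\right)$ and degree at least $\operatorname{Md}\left(\theta\right)$; since both have full rank their degrees sum to $\operatorname{Md}\left(\theta\right)$, giving $\operatorname{Md}\left(\theta\right)\geq2\operatorname{Md}\left(\theta\right)$, a contradiction. With that substitution, the rest of your argument (preservation of total transcendence under naming constants, hence density over every atomic parameter set; transitivity of isolation; limit stages) goes through.
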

Note that for a complete first order theory $T$, being totally transcendental
is an absolute property. This follows from the fact that having an
infinite branch in a tree is an absolute property. Namely, define
the tree $\Sigma$ consisting of finite families of sequences of formulas
of the form $\sequence{\varphi_{s}\left(\bar{x},\bar{y}_{s}\right)}{s\in2^{<n}}$
such that it is consistent with $T$ that there are tuples $\sequence{\bar{a}_{s}}{s\in2^{<n}}$
with the property that for each $t\in2^{n}$ the type $\set{\varphi_{t\upharpoonright k}\left(\bar{x},\bar{a}_{t\upharpoonright k}\right)^{t\left(k\right)}}{k<n}$
is consistent. The order between two such finite families is
\[
\sequence{\varphi_{s}\left(\bar{x},\bar{y}_{s}\right)}{s\in2^{<n}}\leq\sequence{\psi_{s}\left(\bar{x},\bar{y}_{s}\right)}{s\in2^{<m}}
\]
 iff $n\leq m$ and for each $s\in2^{<n},$ $\varphi_{s}=\psi_{s}$.
This is easily seen to be a set theoretic tree. Then $T$ is totally
transcendental iff $\Sigma$ has no infinite branch. This is a $\Delta_{1}$
property, and hence absolute, see \cite[Lemma 13.11]{JechSetTheory}. 
\begin{defn}
\label{def:omega stable}Recall that a complete first order theory
$T$ in a countable language is called \emph{$\omega$-stable} if
for every countable model $M\models T$, the number of complete $1$-types
over $M$ is at most $\aleph_{0}$. 
\end{defn}
It is easy to see that if $T$ is $\omega$-stable then it is also
totally transcendental. The converse is also true when $T$ is countable.
See \cite[Theorem 5.2.6]{TentZiegler}. 
\begin{cor}
Suppose that $\left(P,L,\tauname\right)$ has the isomorphism property,
and that $\ScottPsi=\ScottPsi_{\tauname}\in L_{\infty,\omega}$ is
the Scott sentence of $\tauname$. Let $T_{\ScottPsi}^{co}$ be the
induced theory. Then if $P\forces$``$T_{\ScottPsi}^{co}$ is $\omega$-stable'',
then for some $M\in\Vv$, $P\forces\tauname\cong\check{M}$. \end{cor}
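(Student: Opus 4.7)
The plan is to reduce this to Theorem \ref{thm:main translation} by producing an atomic model of $T_{\ScottPsi}^{co}$ inside $\Vv$, and to extract this model from the absoluteness of total transcendence together with Fact \ref{fac:tt has atomic}.

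First I would fix an arbitrary generic $G\subseteq P$ over $\Vv$, and work in $\Vv[G]$. Since $\tauname[G]$ is a countable $L$-structure in $\Vv[G]$, its Scott sentence $\ScottPsi=\ScottPsi_{\tauname}$ lies in $L_{\omega_{1},\omega}^{\Vv[G]}$, so by Lemma \ref{lem:languae,theory,types} both $L_{\ScottPsi}$ and $T_{\ScottPsi}$ (and hence the completion $T_{\ScottPsi}^{co}$ constructed in Remark \ref{rem:abs of completion}) are countable in $\Vv[G]$. By hypothesis $P\forces$``$T_{\ScottPsi}^{co}$ is $\omega$-stable'', so in $\Vv[G]$ the complete countable theory $T_{\ScottPsi}^{co}$ is $\omega$-stable, which for countable theories coincides with being totally transcendental (cited after Definition \ref{def:omega stable}).

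Now I would invoke the absoluteness discussion preceding Definition \ref{def:omega stable}: being totally transcendental is expressible as the non-existence of an infinite branch in a certain set-theoretic tree built from $T_{\ScottPsi}^{co}$, which is a $\Delta_1$ property and therefore absolute between $\Vv$ and $\Vv[G]$. Since by Remarks \ref{rem:absoluteness of G_psi} and \ref{rem:abs of completion} the object $T_{\ScottPsi}^{co}$ itself is the same in $\Vv$ and $\Vv[G]$, its total transcendence transfers down to $\Vv$. Applying Fact \ref{fac:tt has atomic} in $\Vv$ to the totally transcendental theory $T_{\ScottPsi}^{co}$ yields an atomic model of $T_{\ScottPsi}^{co}$ that lives in $\Vv$.

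Finally I would feed this atomic model into the (2)$\Rightarrow$(1) direction of Theorem \ref{thm:main translation}, which gives an $L$-structure $M\in\Vv$ with $P\forces\tauname\cong\check{M}$. There is no real obstacle here: the only point that needs care is the absoluteness step (checking that $T_{\ScottPsi}^{co}$ really is the same theory on both sides and that the tree witnessing total transcendence is built uniformly), but this has already been set up in Remarks \ref{rem:absoluteness of G_psi}, \ref{rem:abs of completion}, and the paragraph after Fact \ref{fac:tt has atomic}, so the argument reduces to citing these ingredients in the correct order.
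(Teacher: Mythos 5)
Your proposal is correct and follows essentially the same route as the paper: pass from $\omega$-stability plus countability (in the extension) to total transcendence, transfer this to $\Vv$ by the $\Delta_1$/tree absoluteness argument together with the absoluteness of $T_{\ScottPsi}^{co}$ itself, obtain an atomic model in $\Vv$ from Fact \ref{fac:tt has atomic}, and conclude via Theorem \ref{thm:main translation}. The paper's proof is just a more compressed version of exactly these steps.
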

\begin{proof}
Since $P$ forces that $T_{\ScottPsi}^{co}$ is $\omega$-stable and
countable, it follows that $P\forces$``$T_{\ScottPsi}^{co}$ is totally
transcendental''. But then $T_{\ScottPsi}^{co}$ is totally transcendental
by absoluteness. So $T_{\ScottPsi}^{co}$ has an atomic model by Fact
\ref{fac:tt has atomic}, and hence by Theorem \ref{thm:main translation}
we are done. 
\end{proof}
Warning: it is tempting to think that if $\left(P,L,\tauname\right)$
has the isomorphism property and $P\forces$``$Th\left(\tauname\right)$
is $\omega$-stable'' then $T_{\ScottPsi_{\tauname}}^{co}$ is $\omega$-stable.
However this is not the case.
\begin{example}
It is well known that there is a graph on $\omega$ with language
$\left\{ S\right\} $ (so $S$ is a 2-place relation) such that the
full theory $\left(\Nn,+,\cdot,0,1\right)$ can be interpreted in
$\left(\omega,S\right)$ see e.g., \cite[Theorem 5.5.1]{Hod}. Let
$L=\left\{ P,Q,\pi_{1},\pi_{2}\right\} \cup\set{Q_{k}}{k<\omega}$
where $P,Q$ are unary predicates, $\pi_{1},\pi_{2}$ are unary function
symbols, and for $k<\omega$, $Q_{k}$ is a unary predicate. Let $M$
be the following $L$-structure: its universe is the union of $P^{M}\cup Q^{M}$
where $P^{M}=\omega$ and 
\[
Q^{M}=\set{\left(n,m,\alpha\right)}{n,m<\omega,\alpha\leq\omega,\left(n\mathrela Sm\to\alpha<\omega\right)}.
\]
The function $\pi_{1}^{M}:Q^{M}\to P^{M}$ is the projection to the
first coordinate ($\pi_{1}^{M}\left(n,m,\alpha\right)=n$) and $\pi_{2}^{M}:Q^{M}\to P^{M}$
is the projection to the second coordinate (meaning that on $P^{M}$,
$\pi_{1}$ and $\pi_{2}$ are the identity). Finally, $Q_{k}^{M}=\set{\left(n,m,k\right)}{n,m<\omega}$.
Let $T=Th\left(M\right)$. Then it is easy to see that $T$ has quantifier
elimination. If $N\equiv M$ is a countable model, then the number
of $1$-types over $N$ is countable: given $c\notin N$ in some elementary
extension, the type of $c$ over $N$ is determined as follows. If
$c\in P$ its type is the unique (non-algebraic) type. Otherwise the
type of $c$ is determined by the unique $k<\omega$ so that $c\in Q_{k}$
(if there is any) and by the type of the pair $\left(\pi_{1}\left(c\right),\pi_{2}\left(c\right)\right)$
over $N$. Hence $T$ is $\omega$-stable. 

Let $\ScottPsi=\ScottPsi_{M}$ be the Scott sentence of $M$. For
any $a\in Q^{M}\backslash\bigcup_{k<\omega}Q_{k}^{M}$, $\bigwedge_{k<\omega}\neg Q_{k}\left(x\right)$
holds. Even though formally $\varphi\left(x\right)=\bigwedge_{k<\omega}\neg Q_{k}\left(x\right)$
is not a sub-formula of $\ScottPsi$ (because $\phi_{0,a}$ contains
the full atomic type of $a$), $\varphi^{M}$ is a definable set in
$H_{\ScottPsi}\left(M\right)$. It follows that in $H_{\ScottPsi}\left(M\right)$,
$\omega$ is definable (by $R_{P}$) and also $S$: $n\mathrela Sm$
iff $R_{P}\left(n\right)$, $R_{P}\left(m\right)$ and $\forall z\in R_{Q}\left(\left(\pi_{1}\left(z\right)=n\land\pi_{2}\left(z\right)=m\right)\to\neg\left(\varphi^{M}\left(z\right)\right)\right)$.
So a model of $T_{\ScottPsi}$ can interpret the full theory of arithmetic.
In particular, $T_{\ScottPsi}$ is not $\omega$-stable. 
\end{example}

\subsection{\label{sub:Superstable-theories}Superstable theories}

Recall the following definition.
\begin{defn}
A complete first order theory $T$ is superstable if there exists
some cardinal $\lambda$ such that for all model $M\models T$, $\left|S_{1}\left(M\right)\right|\leq\left|M\right|+\lambda$. 
\end{defn}
We start by giving an example, similar in spirit to the one in Fact
\ref{fact:Shelah-Laskowski}, but here we replace $\aleph_{2}$ with
$\left(2^{\aleph_{0}}\right)^{+}$, and the theory involved is superstable. 
\begin{example}
\label{exa:superstable-continuum plus}(Thanks to Chris Laskowski)
Let $N\in\omega$. Let $L_{N}=\left\{ U,V,\pi\right\} \cup\set{E_{n}}{n<N+1}$
where $U,V$ are unary predicates, $\pi$ is a unary function symbol
and for $n<N+1$, $E_{n}$ is a binary relation. Let $T_{N}^{\forall}$
be the following theory:
\begin{itemize}
\item $U,V$ are disjoint.
\item $\pi:U\to V$. 
\item For each $n<N$: $E_{n}$ is an equivalence relation on $U$; $E_{n+1}\subseteq E_{n}$;
$E_{n+1}$ has at most two classes in any $E_{n}$ class; $E_{0}$
has just one class.
\end{itemize}
Now, $T_{N}^{\forall}$ is a universal theory with the amalgamation
and disjoint embedding properties. Hence by e.g., \cite[Theorem 7.4.1]{Hod},
it has a model completion, $T_{N}$, which eliminates quantifiers
and is $\omega$-categorical. One can also check that $T_{N+1}\supseteq T_{N}$.
Let $T=\bigcup_{N<\omega}T_{N}$. 
\begin{claim}
$T$ is superstable but not $\omega$-stable. \end{claim}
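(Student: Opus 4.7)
The plan is to analyse $1$-types via quantifier elimination. First, $T$ inherits quantifier elimination from the $T_N$: any $L$-formula uses only finitely many $E_n$ and so lies in $L_N$ for some $N$, where $T_N$ already eliminates quantifiers; a similar argument shows that $T$ is complete. Consequently a complete $1$-type $p(x)\in S_{1}(M)$ with $U(x)\in p$ is determined by the data (a) for each $n<\omega$ either an $E_n$-class of $U^M$ containing $x$ or the stipulation that $x$ lies in no $E_n$-class of $U^M$; (b) the value of $\pi(x)$, either an element of $V^M$ or else distinct from all of them; (c) whether $x$ equals some $a\in U^M$. Types with $V(x)\in p$ are even simpler, determined by equality or non-equality with elements of $V^M$.

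For superstability I bound $|S_{1}(M)|$ by $|M|+2^{\aleph_0}$. Types realised in $M$ contribute at most $|M|$, and types on $V$ contribute at most $|M|+1$. A non-realised type on $U$ is given by a descending chain of $E_n$-classes in $M$ together with a $\pi$-value. Chains that escape (i.e., lie in no $E_{n^*}$-class of $M$ for some first $n^*$) contribute at most $\sum_{n<\omega}|M|^{n}\cdot(|M|+1)\le|M|$ types, while persistent infinite chains are branches of a binary tree of depth $\omega$, of which there are at most $2^{\aleph_0}$, contributing at most $2^{\aleph_0}\cdot(|M|+1)\le|M|+2^{\aleph_0}$. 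Hence $|S_{1}(M)|\le|M|+2^{\aleph_0}$ for every $M$, so $T$ is $\lambda$-stable for every $\lambda\ge 2^{\aleph_0}$ and therefore superstable.

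To refute $\omega$-stability I construct a countable $M\models T$ whose $E_n$-class tree on $U^M$ is the full binary tree of depth $\omega$, i.e., every $E_n$-class of $M$ splits into two non-empty $E_{n+1}$-classes in $M$. Such an $M$ exists by a standard amalgamation-based construction, exploiting the amalgamation and disjoint-embedding properties that were used to obtain the model completions $T_N$. This tree has $2^{\aleph_0}$ infinite branches, and by the analysis above each branch determines a distinct complete $1$-type over $M$ (two branches disagreeing at level $n$ are separated by a formula of the form $xE_na$), giving $|S_{1}(M)|\ge 2^{\aleph_0}$, so $T$ is not $\omega$-stable.

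The main technical point is verifying the two genericity statements that drive the argument: that in the generic $M$ every class does split into two non-empty sub-classes (so that the full binary tree of branches is realised inside $M$), and that every branch is in fact realised by a consistent $1$-type over $M$ with no further identifications collapsing the count. Both reduce to quantifier elimination for each $T_N$ combined with the amalgamation property used to define the model completion; once these are verified, the counting in the two previous paragraphs is routine.
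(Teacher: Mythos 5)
Your proof is correct and follows essentially the same route as the paper: quantifier elimination gives the bound $\left|S_{1}\left(M\right)\right|\leq\left|M\right|+2^{\aleph_{0}}$ for superstability, and the $2^{\aleph_{0}}$ branches of the $E_{n}$-class tree over a countable model refute $\omega$-stability. The only (harmless) difference is that you build a special countable model realizing the full binary splitting, whereas the paper notes that \emph{every} countable model of $T$ already contains representatives of all $2^{n}$ classes of $E_{n}$ (since $T$, being the union of the model completions $T_{N}$, forces each $E_{n}$-class to split), which also makes your ``escaping chains'' case vacuous.
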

\begin{proof}
[Proof of claim] If $M$ is a countable model of $T$, then for each
$n<\omega$, $M$ contains representatives for all the $2^{n}$ classes
of $E_{n}$. Hence there are $2^{\omega}$ many types over $M$, so
$T$ is not $\omega$-stable. However, it is superstable by quantifier
elimination, as there are at most $2^{\aleph_{0}}+\lambda$ 1-types
over a model of size $\lambda$. \end{proof}
\begin{claim}
\label{claim:The-isolated-types are dense}The isolated types are
dense in $T$.\end{claim}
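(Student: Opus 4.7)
My plan rests on two general features of the setup that I would establish first. Since each $T_N$ has quantifier elimination in $L_N$ and $L=\bigcup_N L_N$, the theory $T$ itself has quantifier elimination in $L$, so complete types over $\emptyset$ in $T$ are determined by their quantifier-free types, i.e.\ by the truth values they assign to atomic $L$-formulas. Moreover each $T_N$ is $\omega$-categorical, so every complete type over $\emptyset$ in $T_N$ in a finite tuple of variables is isolated by a single quantifier-free $L_N$-formula. Given a consistent $\varphi(x_1,\ldots,x_k)\in L$, I would locate $N$ with $\varphi\in L_N$, pick $M\geq N$ with $2^{M-N}\geq k$, and then produce a quantifier-free $L_M$-formula $\theta(\bar x)$ that isolates a complete $L_M$-type of $T_M$, implies $\varphi$, and crucially forces every two distinct $U$-components $x_i,x_j$ of $\bar x$ to satisfy $\neg E_M(x_i,x_j)$.

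Given such $\theta$, I would then check that it isolates a complete $L$-type of $T$. By QE of $T$ the only atomic $L$-formulas on $\bar x$ not yet decided by $\theta$ are $E_m(x_i,x_j)$ for $m>M$, and each of these is decided: if $\theta\vdash x_i=x_j$ then $E_m(x_i,x_j)$ is forced true; if $\theta\vdash\neg U(x_i)\vee\neg U(x_j)$ it is forced false (since $T$'s axioms put $E_m$ on $U$); and in the remaining case $\theta\vdash U(x_i)\wedge U(x_j)\wedge\neg E_M(x_i,x_j)$, which together with $E_m\subseteq E_M$ forces $E_m(x_i,x_j)$ to fail as well. Thus $\theta$ determines the full quantifier-free $L$-type of $\bar x$, so by QE it isolates a complete $L$-type extending $\varphi$.

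The core work is producing $\theta$, and this is where I expect the main obstacle. Using that $T_M$ is the model completion of $T_M^\forall$ and is $\omega$-categorical with QE, it suffices to exhibit a model of $T_M^\forall$ containing a tuple $\bar a$ realizing $\varphi$ whose distinct $U$-entries lie in pairwise distinct $E_M$-classes; its quantifier-free $L_M$-diagram will then extend to an isolated complete $L_M$-type of $T_M$. Starting from an arbitrary model of $T_M^\forall$ realizing $\varphi$, I would refine the $E_n$-structure iteratively for $N<n\leq M$: at stage $n$, any $E_n$-class that still contains more than one of the relevant $U$-entries is split into two $E_{n+1}$-subclasses, each receiving roughly half. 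The hard point is that every split is capped by the ``at most two children per parent'' axiom of $T_M^\forall$, which is precisely why $M-N$ must be at least $\log_2 k$; once this counting budget is met, the refinement can be carried out while preserving every axiom of $T_M^\forall$, and the plan closes.
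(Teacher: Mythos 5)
Your proposal is correct and follows essentially the same route as the paper's proof: both reduce, via quantifier elimination and the fact that each $T_K$ is $\omega$-categorical with QE, to producing a realization of the given formula whose distinct $U$-entries lie in pairwise distinct $E_K$-classes for $K$ with $2^{K-N}$ at least the relevant count, and both then observe that the finite quantifier-free $L_K$-diagram isolates a complete type because $E_m\subseteq E_K$ for $m>K$ decides all finer relations. The only cosmetic difference is that the paper obtains such a tuple by invoking existential closedness of the model as a model of $T_{K+1}^{\forall}$, whereas you construct a finite model of $T_M^{\forall}$ directly by iteratively splitting classes within the two-children-per-class budget; both implementations are fine.
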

\begin{proof}
[Proof of claim] Suppose that $\psi\left(x_{0},\ldots,x_{n-1}\right)$
is a consistent formula in $T$. Suppose $\psi$ is in $L_{N+1}$
for some $N$. Let $\bar{a}=a_{0},\ldots,a_{n-1}$ realize $\psi$
in some model $M\models T$ and assume that $\bar{a}\subseteq U^{M}$.
There are $2^{N}$ many $E_{N}$-classes, and let us partition $\bar{a}$
into these classes, and suppose the largest such class has $l$ elements.
Let $K\geq N$ be such that $2^{K-N}\geq l$, so each $E_{N}$ classes
contains at least $l$ distinct $E_{K}$-classes. Let $\bar{b}=b_{0},\ldots,b_{n-1}$
in $M$ have the same type as $\bar{a}$ in $L_{N+1}$ but $b_{i}$
and $b_{j}$ are not $E_{K}$-equivalent for $i\neq j$ (such a $\bar{b}$
exists since $M\upharpoonright L_{K+1}$ is an existentially closed
model of $T_{K+1}^{\forall}$, and by our choice of $K$). Let $p\left(\bar{x}\right)$
be the quantifier free type of $\bar{b}$ in $L_{K+1}$, so it is
a finite set, and let $\theta\left(\bar{x}\right)=\bigwedge p$. Then
$T\models\theta\to\psi$ and $\theta$ isolates a complete type (since
if $\bar{c}\models\theta$, then the partition of $\bar{c}$ by any
equivalence relation $E_{i}$ is determined by $\theta$). If $\bar{a}$
contains also some elements from $V$, then a simple adjustment of
the above argument will work. 
\end{proof}
Hence $T$ has an atomic model $M_{0}$ by Fact \ref{fac:atomic equivalent}.
Note that, by quantifier elimination, in any model $N\models T$,
$V^{N}$ is an infinite indiscernible set: all sets of elements of
size $n$ have the same type. Let $N\models T$ be of size $\geq\left(2^{\aleph_{0}}\right)^{+}$
and assume that $N$ contains some $A\subseteq V^{N}$ of size $\geq\left(2^{\aleph_{0}}\right)^{+}$.
Let $T\left(A\right)$ be the expansion of $T$ by the full theory
of $N$ in the language $L\left(A\right)$ (where we add constants
for elements of $A$). 
\begin{claim}
$T\left(A\right)$ is superstable of size $\left|A\right|$. The isolated
types are dense in $T\left(A\right)$ but $T\left(A\right)$ has no
atomic model. \end{claim}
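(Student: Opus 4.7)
The claim to prove has three parts: that $T(A)$ is superstable with $|T(A)|=|A|$, that the isolated types are dense in $T(A)$, and that $T(A)$ has no atomic model. The first two largely recycle machinery that has already appeared in the example, so the heart of the work lies in the third.

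For superstability and size, observe that $L(A)=L\cup\{c_{a}:a\in A\}$ has cardinality $|A|$, and that any model $N\models T(A)$ is simply a model of $T$ in which $A$ sits inside $V^{N}$. Hence $1$-types in $L(A)$ over $N$ coincide with $1$-types in $L$ over $N$, so $|S_{1}^{T(A)}(N)|=|S_{1}^{T}(N)|\leq|N|+2^{\aleph_{0}}$ and $T(A)$ inherits superstability from $T$ with the same cardinal witness.

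For density of isolated types, I would mimic the proof of Claim \ref{claim:The-isolated-types are dense}, with one small adaptation. Given a consistent $\psi(\bar{x},\bar{a})$ with $\bar{a}\in A$, realize it by $\bar{c}$ in some model, and exploit the indiscernibility of $V$ over the finite tuple $\bar{a}$ together with $|A\setminus\bar{a}|\geq\aleph_{0}$ to first adjust $\bar{c}$ so that every $V$-coordinate equals some fresh element of $A$ and every $U$-coordinate has its $\pi$-image in $A$; this adjustment preserves $\psi$. Then the $E_K$-separation argument of the earlier claim produces a formula $\theta(\bar{x},\bar{a})$ in $L_{K+1}(A)$ whose quantifier-free content determines a complete type in $T(A)$, using quantifier elimination of $T$.

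The crux is the nonexistence of an atomic model. Suppose for contradiction that $M$ is one. Because $T$ is the theory of the existentially closed models of $T^{\forall}$, the map $\pi^{M}$ is surjective, so for each $a\in A$ I pick some $u_{a}\in U^{M}$ with $\pi(u_{a})=a$. For distinct $a\ne b$ in $A$, the pair $(u_{a},u_{b})$ has its type over $\emptyset$ in $L(A)$ isolated by a finite formula $\theta(x_{1},x_{2})$, involving only $E_{0},\ldots,E_{K}$ for some $K<\omega$. I claim $\theta\models\neg E_{K}(x_{1},x_{2})$. Otherwise $\theta\models E_{K}(x_{1},x_{2})$, but by the ``at most two sub-classes'' clause in $T^{\forall}_{K+1}$ together with the existential closure built into the model completion, in any model of $T$ every $E_{K}$-class splits into $E_{K+1}$-subclasses in such a way that both $E_{K+1}(x_{1},x_{2})$ and $\neg E_{K+1}(x_{1},x_{2})$ are realized over data $\theta$ can see, so $\theta$ fails to decide $E_{K+1}$, contradicting isolation. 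Hence $u_{a}$ and $u_{b}$ already differ in $E_{K}$; fixing once and for all a labeling of each pair of sibling $E_{n+1}$-subclasses by $\{0,1\}$, I attach to each $u_{a}$ a branch $\eta_{a}\in 2^{\omega}$ with $\eta_{a}\ne\eta_{b}$ whenever $a\ne b$. The resulting injection $A\hookrightarrow 2^{\omega}$ yields $|A|\leq 2^{\aleph_{0}}$, contradicting $|A|\geq(2^{\aleph_{0}})^{+}$.

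The main obstacle is the justification, in the last paragraph, that both $E_{K+1}$-possibilities inside an $E_{K}$-class are genuinely consistent with any $L_{K+1}$-formula; this hinges on the freedom granted by the model completion $T_{K+1}$, where each $E_{K}$-class is split into $E_{K+1}$-subclasses independently of the $\pi$-coordinates.
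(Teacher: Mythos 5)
Your proof is correct and follows essentially the same route as the paper's: superstability and size are immediate, density of isolated types is obtained by adapting the earlier density claim (one of the two options the paper itself offers), and the non-existence of an atomic model rests on the same combination of surjectivity of $\pi$, the fact that the $E_{n}$'s determine at most $2^{\aleph_{0}}$ branches, and quantifier elimination showing that a single formula cannot decide all the $E_{n}$'s. The only difference is presentational: you run the last step contrapositively (atomicity forces an injection of $A$ into $2^{\omega}$, contradicting $\left|A\right|\geq\left(2^{\aleph_{0}}\right)^{+}$), whereas the paper uses pigeonhole to find a pair $a\mathrel{E_{n}}b$ for all $n$ and observes that its type is not isolated.
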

\begin{proof}
[Proof of claim]The first sentence is clear. To see that the isolated
types are dense in $T\left(A\right)$, one can either repeat the proof
of Claim \ref{claim:The-isolated-types are dense}, or note that since
$A$ is an indiscernible set, given a consistent formula $\varphi\left(\bar{x},\bar{a}\right)$
($\bar{a}$ is a tuple from $A$), there is some tuple $\bar{b}$
in $V^{M_{0}}$ with the same type as $\bar{a}$. Suppose that $\psi\left(\bar{y}\right)$
isolates the type of $\bar{b}$. There is a formula $\theta\left(\bar{x},\bar{y}\right)$
which isolates a type and implies $\varphi\left(\bar{x},\bar{y}\right)\land\psi\left(\bar{y}\right)$,
so $\theta\left(\bar{x},\bar{b}\right)$ is consistent, implies $\varphi\left(\bar{x},\bar{b}\right)$
and isolates a complete type and hence so is $\theta\left(\bar{x},\bar{a}\right)$.

Suppose $N\models T\left(A\right)$ is atomic. Since $\pi$ is onto
(which follows from our choice of $T$), $\left|U^{N}\right|>2^{\aleph_{0}}$.
Hence there are $a,b\in U^{N}$ such that $a\mathrela{E_{n}}b$ for
all $n<\omega$. But easily, the type $\tp\left(a,b\right)$ is not
isolated by quantifier elimination. 
\end{proof}
\end{example}
This example gives a weaker version of Corollary \ref{cor:negative answer}.
\begin{cor}
\label{cor:negative answer1}Example \ref{exa:superstable-continuum plus}
shows that if $P$ collapses $\left(2^{\aleph_{0}}\right)^{+}$ to
$\aleph_{0}$ then $P$ is not \nice, and (2) in Theorem \ref{thm:main translation2}
can be witnessed with a superstable theory. 
\end{cor}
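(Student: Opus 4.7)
The plan is to combine Example \ref{exa:superstable-continuum plus} with Theorem \ref{thm:main translation2}. First I would fix the superstable theory $T$ constructed in Example \ref{exa:superstable-continuum plus}, choose a model of $T$ containing a subset $A$ of the sort $V$ of cardinality $\left(2^{\aleph_{0}}\right)^{+}$, and form the complete first-order theory $T\left(A\right)$ in the enlarged language that adds constants for the elements of $A$ together with the full theory of the chosen model.

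The three claims already verified in Example \ref{exa:superstable-continuum plus} then give the package I need in $\Vv$: $T\left(A\right)$ is a complete superstable theory of cardinality $\left(2^{\aleph_{0}}\right)^{+}$, the isolated types are dense in it, yet $T\left(A\right)$ has no atomic model in $\Vv$. Assuming that $P$ collapses $\left(2^{\aleph_{0}}\right)^{+}$ to $\aleph_{0}$, I would then observe that $P\forces\left|T\left(A\right)\right|=\aleph_{0}$, so $T\left(A\right)$ meets every hypothesis of clause (2) of Theorem \ref{thm:main translation2} but violates its conclusion. Clause (2) therefore fails for $P$, and Theorem \ref{thm:main translation2} immediately delivers that $P$ is not \nice. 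Since $T\left(A\right)$ itself is superstable, this same witness simultaneously establishes the postscript that clause (2) of Theorem \ref{thm:main translation2} can be witnessed by a superstable theory.

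There is no real obstacle left; the corollary is a direct book-keeping consequence of Example \ref{exa:superstable-continuum plus} and Theorem \ref{thm:main translation2}. The only point worth flagging is that the non-existence of an atomic model for $T\left(A\right)$ asserted in Example \ref{exa:superstable-continuum plus} is proved inside $\Vv$, which is exactly what clause (2) of Theorem \ref{thm:main translation2}, whose conclusion is an existence statement in $\Vv$, requires.
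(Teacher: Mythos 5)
Your proposal is correct and follows exactly the route the paper intends: the corollary is stated without a separate proof precisely because it is the direct book-keeping combination of the three claims in Example \ref{exa:superstable-continuum plus} (with $\left|A\right|=\left(2^{\aleph_{0}}\right)^{+}$, so that the collapse makes $T\left(A\right)$ countable) and the equivalence in Theorem \ref{thm:main translation2}. Your closing remark that the non-existence of an atomic model is established in $\Vv$, matching what clause (2) demands, is the right point to flag and is handled correctly.
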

Now, we will show that under Martin's Axiom, if $T$ is superstable,
then such an example as in \ref{fact:Shelah-Laskowski} cannot exist. 
\begin{notation}
Let $\left(P,<\right)$ be a partial order. We use the standard interpretation
of ``strength'' in $P$ when we think of it as a forcing notion,
i.e., $a$ is stronger than $b$ if $a<b$. \end{notation}
\begin{defn}
[\textbf{Martin's Axiom}]For an infinite cardinal $\kappa$, let
$\MA{\kappa}$ (Martin's Axiom for $\kappa$) be the following statement:
\begin{itemize}
\item If $\left(P,<\right)$ is partially ordered set that satisfies the
countable chain condition and if $D$ is a collection of at most $\kappa$
dense subsets of $P$, then there exists a $D$-generic set $G$ on
$P$ (i.e., $G$ meets every element of $D$, $G$ is downward directed:
if $p,q\in G$ then there is a condition stronger than both $p,q$,
and if $r$ is weaker than $p$, then $r$ is in $G$).
\end{itemize}
\end{defn}
\begin{rem}
\label{rem:Cohen =00003D countable}We will in fact need a weakening
of Martin's Axiom, namely, $\MA[\left(\mbox{Cohen}\right)]{\kappa}$,
which states that whenever $D$ is a collection of at most $\kappa$
dense subsets of the Cohen forcing (adding one real), then there is
a $D$-generic set $G$ on $P$. It is well known that any non-trivial
countable forcing notion is forcing-equivalent to Cohen forcing (i.e.,
they generate the same generic extension). Hence $\MA[\left(\mbox{Cohen}\right)]{\kappa}$
is equivalent to $\MA[\left(\mbox{countable}\right)]{\kappa}$ (the
restriction of $\MA{\kappa}$ to countable forcing notions). Note
that $\MA[\left(\mbox{Cohen}\right)]{\kappa}$ implies that $\kappa<\cont$. 
\end{rem}
To prove the next theorem we will also have to recall:
\begin{defn}
\label{def:rank}\cite[II, Definition 1.1]{Sh:c} Let $T$ be a complete
first order theory with monster model $\C$. Let $p\left(\bar{x}\right)$
be a partial type. We define by induction on $\alpha$ when is $\rank{\infty}p\geq\alpha$
as follows:
\begin{itemize}
\item $\rank{\infty}p\geq0$ iff $p$ is consistent. 
\item When $\delta$ is a limit ordinal, then $R_{\infty}\left(p\right)\geq\delta$
iff $\rank{\infty}p\geq\alpha$ for all $\alpha<\delta$.
\item $\rank{\infty}p\geq\alpha+1$ iff for every finite $q\subseteq p$,
and for every cardinal $\mu$, there are partial types $\set{q_{i}\left(\bar{x}\right)}{i<\mu}$
such that $\rank{\infty}{q_{i}\cup q}\geq\alpha$, and $q_{i}\cup q_{j}$
are explicitly inconsistent for $i\neq j$ (i.e., there is a formula
$\varphi\left(\bar{x},\bar{y}\right)$ such that for some $\bar{a}\in\C$,
$\varphi\left(\bar{x},\bar{a}\right)\in q_{i}$, $\neg\varphi\left(\bar{x},\bar{a}\right)\in q_{j}$). 
\end{itemize}
\end{defn}
We will need some facts about forking in order to continue. Let $T$
be a complete first order theory, and suppose that $\C$ is its monster
model. Given a set $A\subseteq\C$ (whose size is, as usual, smaller
than $\left|\C\right|$) there is a class of formulas with parameters
in $\C$ which are called the \emph{forking formulas} over $A$. The
precise definition can be found in e.g., \cite[Definition 7.1.7]{TentZiegler}.
Given a tuple $\bar{a}$ and sets $A\subseteq B\subseteq\C$, we write
$\bar{a}\ind_{A}B$ when $p=\tp\left(\bar{a}/B\right)$ does not fork
over $A$, meaning that no formula in $p$ forks over $A$. For our
purposes we will need the following facts:
\begin{fact}
\label{fact:stationary}\cite[Section 8.5]{TentZiegler} Let $A\subseteq B\subseteq\C$.
For any formula $\varphi\left(\bar{x}\right)$, it forks over $A$
iff it forks over $\acl\left(A\right)$ iff any equivalent formula
forks over $A$, so forking is a property of definable sets and not
of formulas. The set of formulas $\varphi\left(\bar{x}\right)$ over
$B$ which fork over $A$ form an ideal (a finite disjunction of forking
formulas forks over $A$, if $\psi$ forks over $A$ and $\varphi\vdash\psi$
then $\varphi$ forks over $A$, and $\bar{x}\neq\bar{x}$ forks over
$A$). As a result, if $q\left(\bar{x}\right)$ is a partial type
over $B$ which does not fork over $A$, then there is a complete
type $q\subseteq p$ over $B$ which does not fork over $A$ (this
is a non-forking extension of $q$). 

If $T$ is stable, then any type over $A$ does not fork over $A$.
If moreover $A=\acl^{\eq}\left(A\right)$ (here we assume $\C=\C^{\eq}$),
then any complete type over $A$ has a unique non-forking extension
to $B$. 
\end{fact}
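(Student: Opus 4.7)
The plan is to derive everything from the standard definition of forking via dividing and, for the stability half, to invoke the fundamental machinery of \cite{TentZiegler}. Recall that $\varphi\left(\bar{x},\bar{a}\right)$ \emph{divides} over $A$ if some $A$-indiscernible sequence $\sequence{\bar{a}_{i}}{i<\omega}$ with $\bar{a}_{0}=\bar{a}$ renders $\set{\varphi\left(\bar{x},\bar{a}_{i}\right)}{i<\omega}$ $k$-inconsistent for some $k$, and $\varphi$ \emph{forks} over $A$ if it implies a finite disjunction of dividing formulas. The first block of assertions then falls out directly: logical equivalence preserves the defined set hence the dividing witness, so forking is a property of $\varphi\left(\C\right)$ and not of its presentation; the ideal axioms (closure under disjunction, closure under ``$\varphi\vdash\psi$'', and that $\bar{x}\neq\bar{x}$ divides over $A$ via the constant sequence) are built into the definition; and the equivalence of forking over $A$ with forking over $\acl\left(A\right)$ is a Ramsey/compactness thinning argument, using that each element of $\acl\left(A\right)$ has only finitely many $A$-conjugates, so any $\acl\left(A\right)$-indiscernible witness produces an $A$-indiscernible one.

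Given that the forking $L\left(B\right)$-formulas form a proper ideal modulo $q$ in the Lindenbaum algebra, a non-forking completion of $q$ is produced by Zorn's lemma: extend $q$ to a maximal consistent set of non-forking formulas, and closure under disjunction upgrades maximality to completeness.

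For the stable case, I would quote from \cite{TentZiegler} two inputs: in a stable theory forking coincides with dividing, and every complete type over a model is definable by a schema $\left(d_{p}\varphi\right)_{\varphi}$. The claim that $\tp\left(\bar{c}/A\right)$ does not fork over $A$ is then nearly tautological: any $A$-indiscernible sequence starting with a tuple already in $A$ may be taken constant, so no formula of $\tp\left(\bar{c}/A\right)$ divides over $A$, whence (by dividing $=$ forking) none forks.

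The hard part will be \emph{stationarity}. The plan is to extend a non-forking $p'\in S\left(B\right)$ to a global non-forking type $\hat{p}$, use definability to code $\hat{p}$ by a schema over its canonical base $\operatorname{Cb}\left(\hat{p}\right)$, and then observe that $\operatorname{Cb}\left(\hat{p}\right)\subseteq\acl^{\eq}\left(A\right)=A$, so two non-forking extensions of $p$ would share the same defining schema and hence coincide. The substantive ingredient is the existence of canonical bases and their location inside $\acl^{\eq}$ of any set over which the type does not fork; this is the genuine stability-theoretic content and is where the reference to \cite{TentZiegler} does the real work.
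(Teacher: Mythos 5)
The paper gives no proof of this Fact: it is quoted from \cite[Section 8.5]{TentZiegler} and used as a black box, so there is no internal argument to compare yours against. Judged on its own terms, your sketch of the first paragraph of the statement is correct and standard: forking (as opposed to dividing) is manifestly invariant under logical equivalence, being defined by implication of a disjunction of dividing instances; the ideal properties follow formally; forking over $A$ versus over $\acl\left(A\right)$ reduces to the corresponding statement for dividing, which is the Ramsey/extraction argument you indicate; and the Zorn argument for non-forking completions is right, provided you take a partial type maximal among those that do not fork \emph{as a type} (no finite conjunction implies a forking formula) rather than a maximal set of individually non-forking formulas, since a conjunction of non-forking formulas may fork. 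The existence claim for stable $T$, via ``no formula over $A$ divides over $A$'' together with ``forking equals dividing'', is also correct.

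The gap is in stationarity. Knowing that both global non-forking extensions are definable over $\acl^{\eq}\left(A\right)=A$ and that both restrict to $p$ on $A$ does not yield that they ``share the same defining schema'': a defining schema over $A$ is recovered from the restriction of the type to a \emph{model} containing $A$, not from the restriction to $A$ itself, because two inequivalent $L\left(A\right)$-formulas may agree on all tuples from $A$ when $A$ is not a model. So the step ``hence coincide'' is a non sequitur as written. Closing it requires a further input --- the finite equivalence relation theorem, or a reduction to stationarity over models via a model $M\supseteq A$ chosen suitably independent from a realization, or the full canonical-base package (a global type is the unique non-forking extension of its restriction to its canonical base). Since the paper itself outsources the entire Fact to \cite{TentZiegler}, leaning on the reference here is legitimate, but you should be explicit that it is this uniqueness statement, and not merely the location of the canonical base inside $\acl^{\eq}\left(A\right)$, that you are importing.
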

The connection between forking and ranks is given in:
\begin{fact}
\cite[III, Lemma 1.2]{Sh:c}\label{fact:forking if rank decreases}
Suppose that $p\left(\bar{x}\right)$ is a partial type over $A$,
$\varphi\left(\bar{x},\bar{a}\right)$ a formula and $\rank{\infty}p=\rank{\infty}{p\cup\left\{ \varphi\right\} }<\infty$.
Then $\varphi$ does not fork over $A$. 
\end{fact}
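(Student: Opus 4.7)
The plan is to prove the contrapositive: if $\varphi(\bar{x},\bar{a})$ forks over $A$, then $R_{\infty}(p\cup\{\varphi\})<R_{\infty}(p)$. First I would reduce forking to dividing: by definition, $\varphi$ forks over $A$ means $\varphi\vdash\bigvee_{i<n}\psi_{i}$ with each $\psi_{i}(\bar{x},\bar{a}_i)$ dividing over $A$. Since $R_{\infty}$ of a partial type containing a finite disjunction equals the maximum of the $R_{\infty}$-values obtained by replacing that disjunction with a single disjunct (this falls out of the inductive definition of $R_\infty$, as any witnessing splitting family for the disjunction pigeonholes into one fixed disjunct), it suffices to prove: if $\psi(\bar{x},\bar{b})$ divides over $A$, then $R_{\infty}(p\cup\{\psi(\bar{x},\bar{b})\})<R_{\infty}(p)$.

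Now suppose $\psi(\bar{x},\bar{b})$ $k$-divides over $A$, witnessed by an $A$-indiscernible sequence $\langle\bar{b}_{i}:i<\omega\rangle$ with $\bar{b}_{0}=\bar{b}$ such that $\{\psi(\bar{x},\bar{b}_{i})\}_{i<\omega}$ is $k$-inconsistent. By compactness and Erd\H{o}s--Rado I would stretch this to an $A$-indiscernible sequence $\langle\bar{b}_{i}:i<\mu\rangle$ of any desired length $\mu$, preserving the $k$-inconsistency. Set $\alpha=R_{\infty}(p\cup\{\psi(\bar{x},\bar{b})\})$ and assume towards contradiction $R_{\infty}(p)=\alpha$. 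By applying automorphisms of $\mathfrak{C}$ over $A$ (using that $p$ is over $A$ and $\bar{b}_i\equiv_A\bar{b}$), one gets $R_{\infty}(p\cup\{\psi(\bar{x},\bar{b}_{i})\})\geq\alpha$ for every $i<\mu$. The goal is to upgrade this to $R_{\infty}(p)\geq\alpha+1$, contradicting $R_\infty(p)=\alpha$.

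To witness $R_{\infty}(p)\geq\alpha+1$, I need, for every finite $q\subseteq p$ and every cardinal $\mu$, a family of $\mu$-many partial types $q_{\beta}$ extending $q$ of rank $\geq\alpha$ that are pairwise explicitly inconsistent. If $k=2$, the types $q_{i}=q\cup\{\psi(\bar{x},\bar{b}_{i})\}$ work directly: inconsistency of $\{\psi(\bar{x},\bar{b}_i),\psi(\bar{x},\bar{b}_j)\}$ plus the definability of $\psi$ yields the explicit inconsistency, and ranks remain $\geq\alpha$ since $q\subseteq p$. The main obstacle is the case $k>2$, where $k$-wise inconsistency must be converted into \emph{pairwise} explicit inconsistency. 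The standard device is to partition $\mu$ into pairwise disjoint $(k-1)$-element blocks $\{I_{\beta}:\beta<\mu\}$ and set $q_{\beta}=q\cup\{\psi(\bar{x},\bar{b}_{i}):i\in I_{\beta}\}$; then $q_{\beta}\cup q_{\beta'}$ contains $2(k-1)\geq k$ formulas from the $k$-inconsistent family, so is inconsistent, and one extracts an explicit contradiction $\varphi,\neg\varphi$ inside a common finite subtype. The delicate point is maintaining $R_{\infty}(q_{\beta})\geq\alpha$: this needs an auxiliary induction on $k$, showing that adding finitely many further conjugates $\psi(\bar{x},\bar{b}_{i'})$ to a type of rank $\alpha$ in this indiscernible setting does not drop the rank (by iterating the argument for smaller inconsistency number after replacing $\psi$ by a suitable conjunction and the sequence by a sparser subsequence). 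Once these pieces are in place, we obtain $R_{\infty}(p)\geq\alpha+1$, the desired contradiction, completing the proof.
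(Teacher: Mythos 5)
First, note that the paper itself gives no proof of this statement: it is quoted as a Fact with a citation to \cite[III, Lemma 1.2]{Sh:c}, so there is no in-paper argument to compare against; I am judging your proposal against the standard proof. Your overall architecture is the right one: pass to the contrapositive, reduce forking to dividing via monotonicity of $R_{\infty}$ under entailment together with the disjunction property $R_{\infty}(p\cup\{\psi_{1}\vee\psi_{2}\})=\max_{i}R_{\infty}(p\cup\{\psi_{i}\})$, stretch the witnessing $A$-indiscernible sequence, and use invariance of the rank under $A$-automorphisms (legitimate because $p$ is over $A$) to get $R_{\infty}(p\cup\{\psi(\bar{x},\bar{b}_{i})\})=\alpha$ for all $i$. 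The $k=2$ case is also fine once you say precisely what you mean: you enlarge each $q\cup\{\psi(\bar{x},\bar{b}_{i})\}$ by the \emph{implied} formulas $\neg\psi(\bar{x},\bar{b}_{j})$, $j\neq i$, which does not change the rank and produces the explicitly contradictory family required by Definition \ref{def:rank} (``definability of $\psi$'' is not the relevant point).

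The genuine gap is in the step you yourself flag as delicate. The auxiliary claim you propose to prove --- that adding finitely many further conjugates $\psi(\bar{x},\bar{b}_{i'})$ from the indiscernible sequence to a type of rank $\alpha$ does not drop the rank --- is simply false in general, and the block types $q_{\beta}=q\cup\{\psi(\bar{x},\bar{b}_{i}):i\in I_{\beta}\}$ with $|I_{\beta}|=k-1$ may well have rank $<\alpha$ (they may even be inconsistent if the family happens to be $(k-1)$-inconsistent as well). The correct handling is a case split on whether $R_{\infty}(p\cup\{\psi(\bar{x},\bar{b}_{0})\wedge\psi(\bar{x},\bar{b}_{1})\})\geq\alpha$. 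If yes, replace $\psi$ by this conjunction and the sequence by $\langle\bar{b}_{2i}\bar{b}_{2i+1}\rangle$; the new family is $\lceil k/2\rceil$-inconsistent, so induction on $k$ applies (this is the half of the idea you do mention). If no --- and this is exactly the situation your block device cannot survive --- the disjunction property forces $R_{\infty}(p\cup\{\psi(\bar{x},\bar{b}_{i})\}\cup\{\neg\psi(\bar{x},\bar{b}_{j}):j\in S\})=\alpha$ for every finite $S\not\ni i$, and these types (over all $j<\mu$) are already pairwise explicitly contradictory, giving $R_{\infty}(p)\geq\alpha+1$ directly. So the theorem is true and your skeleton is salvageable, but the mechanism that actually produces the pairwise explicitly contradictory family of full rank is the negation/disjunction argument in the second case, not the $(k-1)$-element blocks; as written, the proof does not go through.
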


\begin{fact}
\cite[II, Theorem 3.14]{Sh:c} A complete theory $T$ is superstable
iff for any formula $\varphi\left(x_{0},\ldots,x_{n-1}\right)$ (perhaps
with parameters), $\rank{\infty}{\varphi}<\infty$. Superstable theories
are stable. 
\end{fact}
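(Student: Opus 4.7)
This is Shelah's classical rank characterization of superstability from \cite{Sh:c}; I would reproduce the standard argument, proving each direction of the equivalence by contrapositive and getting ``superstable $\Rightarrow$ stable'' as an essentially immediate consequence. For that last clause, the inequality $|S_{1}(M)|\leq|M|+\lambda$ for all $M$ forces $|S_{1}(M)|\leq|M|$ whenever $|M|\geq\lambda$, so $T$ is stable in every sufficiently large cardinal and hence stable.

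For ``superstable $\Rightarrow\rank{\infty}{\varphi}<\infty$'': assume toward contradiction that $\rank{\infty}{\varphi_{0}}=\infty$. The engine is the following extraction lemma: whenever $\rank{\infty}{\psi}\geq\beta+1$, applying Definition~\ref{def:rank} with $\mu=2$ and finite subset $\{\psi\}$ produces two partial types $q_{0},q_{1}$ extending $\{\psi\}$, pairwise explicitly inconsistent and each of rank $\geq\beta$; the witness of explicit inconsistency is a formula $\chi(\bar{x},\bar{a})$ for which $\rank{\infty}{\psi\wedge\chi(\bar{x},\bar{a})}$ and $\rank{\infty}{\psi\wedge\neg\chi(\bar{x},\bar{a})}$ are both $\geq\beta$ by monotonicity. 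Iterating, I would build a binary tree $\set{\varphi_{\eta}(\bar{x},\bar{a}_{\eta})}{\eta\in2^{<\alpha}}$ of formulas strengthening $\varphi_{0}$, with $\varphi_{\eta\concat 0}$ and $\varphi_{\eta\concat 1}$ inconsistent and each node retaining enough rank to permit further splitting. For $\alpha=\lambda^{+}$ (where $\lambda$ supposedly witnesses superstability), the $2^{\alpha}$ branches give that many pairwise inconsistent complete types extending $\varphi_{0}$ over a model $M$ containing the parameters $\set{\bar{a}_{\eta}}{\eta\in2^{<\alpha}}$ of cardinality $\leq 2^{<\alpha}+|T|$. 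Then $|S_{1}(M)|\geq 2^{\lambda^{+}}>|M|+\lambda$, contradicting the superstability bound.

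For the converse, $\rank{\infty}{\varphi}<\infty$ for all $\varphi$ implies superstable: I would attach to each $p\in S_{1}(M)$ a formula $\varphi_{p}\in p$ that minimizes $\rank{\infty}{\cdot}$ within $p$; this minimum exists because the rank takes ordinal values and is monotone under extension of partial types. By Fact~\ref{fact:forking if rank decreases}, $p$ does not fork over the parameter set $\bar{a}_{p}$ of $\varphi_{p}$. Stability itself follows from the same tree machinery (the order property would force a formula of infinite rank), which unlocks Fact~\ref{fact:stationary}: passing to $\acl^{\eq}(\bar{a}_{p})$ yields a \emph{unique} non-forking extension of each complete type over $\acl^{\eq}(\bar{a}_{p})$ to $M$. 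So the map $p\mapsto\varphi_{p}$ has fibers of cardinality at most $2^{|T|}$ (one per complete type over $\acl^{\eq}(\bar{a}_{p})$ containing $\varphi_{p}$), giving $|S_{1}(M)|\leq(|M|+|T|)\cdot 2^{|T|}=|M|+2^{|T|}$, which is superstability with witness $\lambda=2^{|T|}$.

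The main obstacle is calibrating the tree construction in the first direction: the inductive bookkeeping is subtle because one must choose, at each node, a split whose two children still have \emph{enough} rank remaining to continue through the rest of the tree, and because the cardinal $\mu$ in Definition~\ref{def:rank} must be pushed up in a controlled way as the tree grows. The converse direction carries a mild circularity---Fact~\ref{fact:stationary} requires stability---which is resolved by first deducing stability from $\rank{\infty}{\cdot}<\infty$ via the order-property/rank argument, and only then invoking stationarity to bound the fiber sizes.
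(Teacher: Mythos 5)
The paper does not prove this Fact; it is quoted verbatim from \cite[II, Theorem 3.14]{Sh:c}, so the only question is whether your reconstruction is sound. Your converse direction (finite rank implies superstable, via a rank-minimal formula $\varphi_{p}\in p$, Fact \ref{fact:forking if rank decreases}, and stationarity over $\acl^{\eq}\left(\bar{a}_{p}\right)$ after first securing stability) is essentially the standard argument and is fine, as is the deduction of stability from superstability.

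The forward direction has a genuine gap: the binary tree of transfinite height $\lambda^{+}$ can neither be built nor would it yield a contradiction. First, Definition \ref{def:rank} only lets you split \emph{finite} subtypes: $\rank{\infty}{p}\geq\beta+1$ produces, for each finite $q\subseteq p$, explicitly inconsistent extensions of $q$ --- not of $p$. At a limit stage $\delta<\lambda^{+}$ the accumulated branch is an infinite descending chain of formulas, and a single splitting formula $\chi$ making $\rank{\infty}{\varphi_{\eta\upharpoonright\beta}\wedge\chi}$ and $\rank{\infty}{\varphi_{\eta\upharpoonright\beta}\wedge\neg\chi}$ large \emph{for every} $\beta<\delta$ simultaneously is not provided by the definition; so the nodes stop extending their ancestors at the first limit ordinal. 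Second, even granting the tree, the count fails in ZFC: the parameters occupy a model of size $2^{\lambda}+\left|T\right|$ while the branches number $2^{\lambda^{+}}$, and $2^{\lambda^{+}}>2^{\lambda}$ is independent of ZFC, so no contradiction with $\left|S_{1}\left(M\right)\right|\leq\left|M\right|+\lambda$ follows. (A binary tree of height $\omega$ is also useless here: $2^{\aleph_{0}}$ types over a countable model is consistent with superstability, as the paper's own Example \ref{exa:superstable-continuum plus} shows.) The correct witness uses width, not height: using the bounding ordinal $\alpha_{0}$ (for which $\rank{\infty}{q}\geq\alpha_{0}$ already implies $\rank{\infty}{q}=\infty$) together with the ``for every cardinal $\mu$'' clause of Definition \ref{def:rank}, one builds a tree of height $\omega$ with $\mu$ pairwise explicitly inconsistent children of rank $\infty$ at every node; its $\mu^{\aleph_{0}}$ branches are pairwise inconsistent over a parameter set of size $\mu+\left|T\right|$, and choosing $\mu>\lambda+\left|T\right|$ of cofinality $\omega$ gives $\mu^{\aleph_{0}}>\mu$ by K\"onig's theorem, which does contradict superstability.
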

Before stating the main theorem, let us recall another fact:
\begin{lem}
\label{lem:alg closure also atomic}If $M$ is a structure, and $A\subseteq M$
is an atomic set, then so is $\acl\left(A\right)$. \end{lem}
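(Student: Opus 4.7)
The plan is to take an arbitrary finite tuple $\bar{c} = (c_0, \ldots, c_{n-1}) \in \acl(A)^{<\omega}$ and show that $\tp(\bar{c}/\emptyset)$ is isolated in $\Th(M)$. Since each $c_i$ is algebraic over $A$, pick for each $i$ a tuple $\bar{a}_i \in A^{<\omega}$ and a formula $\varphi_i(x,\bar{a}_i)$ with finitely many solutions in $M$ such that $M \models \varphi_i(c_i,\bar{a}_i)$. Concatenating, let $\bar{a}$ be a finite enumeration of the parameters occurring in $\bar{a}_0,\ldots,\bar{a}_{n-1}$, and let $\varphi(\bar{x},\bar{y})$ be the natural formula so that $\varphi(\bar{x},\bar{a}) = \bigwedge_i \varphi_i(x_i,\bar{a}_i)$. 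Then $\varphi(\bar{x},\bar{a})$ is algebraic (has finitely many realizations in $M$), so only finitely many complete types over $\bar{a}$ extend it.

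The next step isolates $\tp(\bar{c}/\bar{a})$. Enumerate the finitely many complete types $p_1,\ldots,p_m$ over $\bar{a}$ that extend $\varphi(\bar{x},\bar{a})$, with $p_1 = \tp(\bar{c}/\bar{a})$. For each $j>1$ pick a formula in $p_1 \setminus p_j$; take their conjunction $\chi(\bar{x},\bar{a})$, where $\chi(\bar{x},\bar{y})$ is a formula without extra parameters. Then $\varphi(\bar{x},\bar{a}) \wedge \chi(\bar{x},\bar{a})$ belongs to $p_1$ and is inconsistent with every $p_j$ for $j>1$, hence isolates $\tp(\bar{c}/\bar{a})$. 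Because $A$ is atomic, pick a formula $\psi(\bar{y})$ that isolates $\tp(\bar{a}/\emptyset)$.

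I claim $\theta(\bar{x},\bar{y}) := \psi(\bar{y}) \wedge \varphi(\bar{x},\bar{y}) \wedge \chi(\bar{x},\bar{y})$ isolates $\tp(\bar{c},\bar{a}/\emptyset)$. Indeed, passing to the monster model of $\Th(M)$, suppose $(\bar{c}',\bar{a}') \models \theta$. Since $\bar{a}'$ realizes $\psi$, there is an automorphism $\sigma$ of the monster with $\sigma(\bar{a}') = \bar{a}$; then $\sigma(\bar{c}')$ realizes $\varphi(\bar{x},\bar{a}) \wedge \chi(\bar{x},\bar{a})$, which isolates $\tp(\bar{c}/\bar{a})$, so some further automorphism fixing $\bar{a}$ sends $\sigma(\bar{c}')$ to $\bar{c}$. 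Composing, $(\bar{c}',\bar{a}')$ and $(\bar{c},\bar{a})$ have the same type over $\emptyset$. The standard projection observation finishes the proof: $\exists \bar{y}\,\theta(\bar{x},\bar{y})$ isolates $\tp(\bar{c}/\emptyset)$, since any realization $\bar{c}''$ gives some $\bar{a}''$ with $(\bar{c}'',\bar{a}'') \models \theta$, and hence $\tp(\bar{c}''/\emptyset) = \tp(\bar{c}/\emptyset)$.

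The only delicate point is the isolation of $\tp(\bar{c}/\bar{a})$ within the finite collection of complete types extending $\varphi(\bar{x},\bar{a})$; everything else is bookkeeping. This isolation is the standard fact that complete types containing an algebraic formula are always isolated, which follows immediately from the finiteness of the set of such types together with the existence of formulas separating any two distinct types.
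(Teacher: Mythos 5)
Your proof is correct, and it is the standard argument: witness algebraicity of $\bar{c}$ by an algebraic formula $\varphi(\bar{x},\bar{a})$, isolate $\tp(\bar{c}/\bar{a})$ among the finitely many types extending $\varphi$, conjoin with a formula isolating $\tp(\bar{a}/\emptyset)$ (using atomicity of $A$), and project out $\bar{y}$. The paper omits the proof entirely, calling it ``an easy exercise in the definitions,'' and your write-up is exactly the exercise the authors have in mind.
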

\begin{proof}
The proof is an easy exercise in the definitions. \end{proof}
\begin{thm}
\label{thm:atomic model MA superstable}Assume $\MA[\left(\mbox{Cohen}\right)]{\kappa}$.
Suppose $T$ is superstable and countable, and $A\subseteq M\models T$
has size $\leq\kappa^{+}$. If the isolated types are dense in $T\left(A\right)$,
then $T\left(A\right)$ has an atomic model. \end{thm}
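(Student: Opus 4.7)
The plan is to construct the atomic model $N\supseteq A$ of cardinality $\kappa^+$ by a transfinite recursion of length $\kappa^+$ inside a sufficiently saturated monster $\mathfrak{C}\models T$. Enumerate $A=\langle a_\alpha:\alpha<\kappa^+\rangle$, set $A_\alpha=\{a_\beta:\beta<\alpha\}$, and build a continuous increasing chain $\langle N_\alpha:\alpha\le\kappa^+\rangle$ of subsets of $\mathfrak{C}$ such that $A_\alpha\subseteq N_\alpha$, $|N_\alpha|\le|\alpha|+\aleph_0$, and each $N_\alpha$ is \emph{atomic over $A$}, meaning that every finite tuple from $N_\alpha$ realises an isolated type in $T(A)$. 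The successor step $N_\alpha\rightsquigarrow N_{\alpha+1}$ adjoins $a_\alpha$ together with a witness, inside $\mathfrak{C}$, for each existential formula with parameters in $N_\alpha$ listed by a bookkeeping enumeration, so that the final union $N=N_{\kappa^+}$ is a model of $T(A)$; at limits one takes unions, which trivially preserves being atomic over $A$.

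The core of the construction is the following Key Lemma: given $B\subseteq\mathfrak{C}$ atomic over $A$ with $|B|\le\kappa$, a finite tuple $\bar{d}$ from $B\cup A$, and a consistent formula $\varphi(x,\bar{d})$, there exists $c\in\mathfrak{C}$ realising $\varphi(x,\bar{d})$ such that $B\cup\{c\}$ is still atomic over $A$. Equivalently, one has to produce a complete type $p(x)\in S(B\cup A)$ extending $\varphi$ whose restriction to each finite tuple from $B\cup A$ is isolated. I would prove this by applying $\MA[(\mathrm{Cohen})]{\kappa}$, which by Remark~\ref{rem:Cohen =00003D countable} is equivalent to $\MA[(\mathrm{countable})]{\kappa}$, to a countable forcing whose generic codes an increasing sequence of consistent complete formulas $\psi_0,\psi_1,\dots$, each $\psi_n(x,\bar{e}_n)$ isolating a complete type over a finite tuple $\bar{e}_n\subseteq B\cup A$, with $\psi_{n+1}\vdash\psi_n$ and $\psi_0\vdash\varphi$. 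The forcing is kept literally countable by taking its conditions to be finite combinatorial codes --- finite sequences of indices into fixed enumerations of $B\cup A$ and of formulas of $T$ --- while the $\kappa^+$-sized parameter data lives only in the indexing of the $\kappa$-many dense sets, one for each finite tuple $\bar{e}$ from $B\cup A$, expressing that some $\bar{e}_n$ contains $\bar{e}$. Density of each such set is verified using the density of isolated types in $T(A)$ to refine $\psi_n$ to a complete formula over $\bar{e}_n\bar{e}$.

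Superstability is used to ensure the generic sequence $\langle\psi_n\rangle$ really defines an isolating chain. I would arrange that at each refinement step one chooses $\psi_{n+1}$ of minimal $R_\infty$-rank among complete formulas over $\bar{e}_{n+1}$ that imply $\psi_n$; by Fact~\ref{fact:forking if rank decreases} this refinement is a non-forking extension, and since $R_\infty$ is ordinal-valued on superstable $T$, the ranks must stabilise along the chain, guaranteeing that the generic type $p$ remains consistent and that each finite restriction $p\restriction\bar{e}$ is isolated by the formula at the level where $\bar{e}\subseteq\bar{e}_n$. Realising $p$ in $\mathfrak{C}$ then yields the desired $c$.

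The main obstacle, and the reason $\MA[(\mathrm{Cohen})]{\kappa}$ is essential, is that the naive poset of complete formulas over finite subsets of $B\cup A$ has size $\kappa^+$ and cannot be fed directly into $\MA[(\mathrm{countable})]{\kappa}$: the trick is to separate the countable forcing itself (the Cohen-like combinatorial skeleton) from the $\kappa$-many dense sets encoding the parameter data in $B\cup A$, and then to use superstability --- together with the density of isolated types in $T(A)$ --- to verify that each of these sets genuinely is dense. Once the Key Lemma is established, the main recursion proceeds as described, using that $|N_\alpha|\le\kappa$ at each stage keeps the number of density requirements within what $\MA[(\mathrm{Cohen})]{\kappa}$ can accommodate, producing an $N$ of size $\kappa^+$ which is a model of $T(A)$ atomic over $A$.
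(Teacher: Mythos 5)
Your global strategy --- a length-$\kappa^{+}$ recursion reducing everything to a Key Lemma of the form ``if $B$ is atomic over $A$, $\left|B\right|\leq\kappa$, and $\varphi\left(x,\bar{d}\right)$ is consistent, then some realization $c$ of $\varphi$ keeps $B\cup\left\{ c\right\}$ atomic over $A$'' --- is exactly the paper's, and you have correctly located the obstacle: the natural poset of isolating formulas over finite subsets of $B\cup A$ is too large for $\MA[\left(\mbox{Cohen}\right)]{\kappa}$. But the proposal does not actually overcome that obstacle. Conditions that are ``finite sequences of indices into a fixed enumeration of $B\cup A$'' form a set of size $\left|B\cup A\right|$, which is uncountable; and if instead you fix in advance a countable decoding so that the poset is literally countable, then the conditions can only mention countably many elements of $B\cup A$, and your dense sets (``some $\bar{e}_{n}$ contains $\bar{e}$'') are simply empty for tuples $\bar{e}$ outside that countable range. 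So as written the forcing is either not countable or the density claims fail. Separately, indexing the dense sets by finite tuples from $B\cup A$ gives $\kappa^{+}$ of them when $\left|A\right|=\kappa^{+}$, which $\MA{\kappa}$ cannot accommodate; one must index by finite tuples from $B$ only (this suffices for atomicity over $A$, since the $A$-parameters of the isolating formula can be found inside the dense set rather than prescribed in advance).

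The missing idea, which is the heart of the paper's proof and the actual place where superstability is used, is a localization to a single countable base. One picks a consistent $\psi\left(x,\bar{c}\right)$ over $D=\acl^{\eq}\left(A\cup B\right)$ implying $\varphi$ with $\rank{\infty}{\psi}$ minimal among such formulas, sets $C=\acl^{\eq}\left(\bar{c}\right)$ (a countable set), and lets $\pi\left(x\right)$ be the partial type over $D$ forbidding all formulas that fork over $C$. Rank-minimality together with Fact \ref{fact:forking if rank decreases} and stationarity over $\acl^{\eq}$-closed sets (Fact \ref{fact:stationary}) yield Claim \ref{claim:equivalent modulo pi}: every consistent formula over $D$ below $\psi$ is equivalent modulo $\pi$ to a formula over $C$. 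The poset is then the genuinely countable set of consistent formulas over $C$ implying $\psi$, and the dense set $X_{\bar{d}}$ asks only that $\xi$ have, modulo $\pi$, an isolating consequence $\beta\left(x,\bar{d}',\bar{a}\right)$ with $\bar{d}\subseteq\bar{d}'\subseteq D$ --- it does not ask that $\bar{d}$ appear among the parameters of $\xi$. By contrast, your stated use of superstability (stabilizing $R_{\infty}$ along the generic chain to guarantee consistency and isolation) addresses a non-problem: a decreasing chain of consistent isolating formulas is automatically consistent by compactness, and its finite restrictions are isolated by construction. The real work superstability does is to make the countable poset exist at all.
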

\begin{proof}
Let $\C\models T$ be a monster model of $T$ containing $A$ (so
$\C$ is a reduct of the monster model of $T\left(A\right)$ to the
language $L$ of $T$). We may assume that $\C=\C^{\eq}$. As in the
proof of Proposition \ref{prop:Aleph1}, it is enough to show that
if $B\subseteq\C$ is an atomic set over $A$ of size $\leq\kappa$,
$\bar{b}$ is a finite tuple from $B$, $\bar{a}$ a finite tuple
from $A$, and $\C\models\exists x\varphi\left(x,\bar{b},\bar{a}\right)$,
then for some $c\in\C$, $\C\models\varphi\left(c,\bar{b},\bar{a}\right)$
and $B\cup\left\{ c\right\} $ is atomic over $A$. 

Since $T$ is superstable, there is some consistent formula $\psi\left(x,\bar{c}\right)$
over $D=\acl^{\eq}\left(A\cup B\right)$ such that $\psi\vdash\varphi$,
and $\alpha=\rank{\infty}{\psi}$ is minimal among all consistent
formulas over $D$ which imply $\varphi$. Let $C=\acl^{\eq}\left(\bar{c}\right)\subseteq D$,
so $C$ is a countable set (the algebraic closure is taken in $T$
and not in $T\left(A\right)$). Let $\pi\left(x\right)$ be the partial
type over $D$ consisting of all formulas of the form $\neg\chi\left(x,\bar{e}\right)$
where $\bar{e}$ is from $D$ and $\chi$ forks over $C$ (equivalently,
$\chi$ forks over $\bar{c}$). 

We will say that two formulas $\xi\left(x\right),\zeta\left(x\right)$
(with parameters from $\C$) are\emph{ equivalent modulo $\pi$} if
$\pi\vdash\zeta\leftrightarrow\xi$. 
\begin{claim}
\label{claim:equivalent modulo pi}If $\xi\left(x,\bar{d}\right)$
is a consistent formula over $D$ such that $\xi\vdash\psi$ then
$\xi$ is equivalent modulo $\pi$ to a consistent formula over $C$.\end{claim}
\begin{proof}
[Proof of claim]By choice of $\psi$, $\rank{\infty}{\xi}=\rank{\infty}{\psi}$.
By Fact \ref{fact:forking if rank decreases}, $\xi\left(x\right)$
does not fork over $C$. Suppose $\C\models\xi\left(e\right)$ for
some $e\models\pi$. Let $p=\tp\left(e/C\right)$. Then $q=\tp\left(e/D\right)$
is a non-forking extension of $p$, but by Fact \ref{fact:stationary},
$q$ is the unique non-forking extension of $p$, so $\pi\cup p\vdash\xi$,
and by compactness, for some $\zeta_{e}\in p$, $\pi\cup\left\{ \zeta_{e}\right\} \vdash\xi$.
Hence by compactness, $\xi$ is equivalent modulo $\pi$ to a finite
disjunction of such formulas $\zeta_{i}\left(x\right)$ over $C$. 
\end{proof}
Let $P$ be the set of consistent formulas $\xi\left(x\right)$ over
$C$ which imply $\psi$. We define an order $<$ on $P$ by: $\xi<\zeta$
iff $\C\models\xi\to\zeta$. Equivalently, $\pi\vdash\xi\to\zeta$
(as $\xi$ and $\zeta$ are formulas over $C$, so if $a\models\xi$,
and $\pi\vdash\xi\to\zeta$, then $p=\tp\left(a/C\right)$ does not
fork over $C$, so we can find some $a'\models p$ such that $a'\models\pi$,
so $a'\models\zeta$, and hence $\zeta\in p$, so $a\models\zeta$).
Note that $P$ is countable.

Fix some finite tuple $\bar{d}$ from $B$. Let $X_{\bar{d}}$ be
the set of formulas $\xi$ from $P$ such that: $\pi\vdash\xi\left(x\right)\to\beta\left(x,\bar{d}',\bar{a}\right)$
for some formula $\beta$, where $\bar{a}$ is a finite tuple from
$A$ and $\bar{d}'$ is a finite tuple from $D$ containing $\bar{d}$,
such that $\beta\left(x,\bar{y},\bar{a}\right)$ isolates a complete
type over $A$.

We claim that $X_{\bar{d}}$ is dense in $P$. Indeed, let $\zeta\left(x,\bar{e}\right)\in P$
(so $\bar{e}$ is a finite tuple from $C$). Since $\bar{e}$ is algebraic
over $\bar{c}$, by Lemma \ref{lem:alg closure also atomic}, $\tp\left(\bar{e}\bar{d}/A\right)$
is isolated, say by $\theta_{\bar{e}\bar{d}}\left(\bar{y},\bar{z},\bar{a}'\right)$.
Let $\beta'\left(x,\bar{y},\bar{z},\bar{a}'\right)=\zeta\left(x,\bar{y}\right)\wedge\theta_{\bar{e}\bar{d}}\left(\bar{y},\bar{z},\bar{a}'\right)$.
This is a consistent formula, so by assumption, there is some consistent
$\beta\left(x,\bar{y},\bar{z},\bar{a}\right)\vdash\beta'\left(x,\bar{y},\bar{z},\bar{a}'\right)$
which isolates a complete type over $A$, where $\bar{a}$ is a finite
tuple from $A$. Note that $\beta\left(x,\bar{e},\bar{d},\bar{a}\right)$
is consistent and implies $\zeta\left(x,\bar{e}\right)$. By Claim
\ref{claim:equivalent modulo pi}, $\beta\left(x,\bar{e},\bar{d},\bar{a}\right)$
is equivalent modulo $\pi$ to some formula $\xi\left(x,\bar{e}'\right)$
over $C$ from $P$. Then $\pi\vdash\xi\to\zeta$ and $\xi\in X_{\bar{d}}$
so we are done. 

By $\MA[\left(\mbox{Cohen}\right)]{\kappa}$, there is some $\set{X_{\bar{d}}}{\bar{d}\in B^{<\omega}}$
-generic set $G\subseteq P$. Note that $G$ is a type (i.e., consistent).
This is because all the elements of $P$ are consistent and since
$G$ is downward directed. 

Being a partial type over $C$, $G$ does not fork over $C$, so $G\cup\pi$
is consistent. Let $c\models G\cup\pi$. 

First of all, $c\models G$ and any formula in $G$ implies $\psi\left(x,\bar{c}\right)$,
which implies $\varphi\left(x,\bar{b},\bar{a}\right)$, so $\C\models\varphi\left(c,\bar{b},\bar{a}\right)$.
Now, suppose that $\bar{d}\concat\left\langle c\right\rangle $ is
some finite tuple from $B\cup\left\{ c\right\} $. By choice of $G$,
there is some $\xi\in G\cap X_{\bar{d}}$, so for some $\beta\left(x,\bar{d}',\bar{a}\right)$
as above, $\beta\left(c,\bar{d}',\bar{a}\right)$ holds (since $c\models\pi$),
$\bar{d}'=\bar{d}\concat\bar{d}''$ for some $\bar{d}''$, and $\beta\left(x,\bar{y},\bar{y}'',\bar{a}\right)$
isolates a complete type over $A$. Then $\exists\bar{y}''\beta\left(x,\bar{y},\bar{y}'',\bar{a}\right)\in\tp\left(c\bar{d}/A\right)$
 isolates a complete type, so $B\cup\left\{ c\right\} $ is atomic.
\end{proof}
\begin{rem}
In the stable case, our methods allow us to construct only a locally
atomic model (without Martin's Axiom, but still assuming that the
underlying language is countable). This is a classical result by Lachlan,
see \cite{MR0327505}, later improved upon by Newelski \cite{MR1067224},
where he replaces the assumption that the theory is countable by a
weaker one. 
\end{rem}
By Fact \ref{fac:tt has atomic}, the remark after Definition \ref{def:omega stable},
Example \ref{exa:superstable-continuum plus} and Theorem \ref{thm:atomic model MA superstable}
we conclude:
\begin{cor}
\label{cor:final corollary classification} Suppose $T$ is a countable
first order theory, $A\subseteq M\models T$ and $\kappa$ is a cardinal
such that the isolated types in $T\left(A\right)$ are dense and $\left|A\right|\leq\kappa^{+}$.
Then:
\begin{enumerate}
\item If $T$ is $\omega$-stable then $T\left(A\right)$ has an atomic
model.
\item If $T$ is superstable and $\MA[\left(\mbox{Cohen}\right)]{\kappa}$
holds then $T\left(A\right)$ has an atomic model. 
\item If $T$ is superstable but $\MA[\left(\mbox{Cohen}\right)]{\kappa}$
does not hold then $T\left(A\right)$ may not have an atomic model. 
\end{enumerate}
\end{cor}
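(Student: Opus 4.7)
The corollary assembles results established earlier, so my plan is to address the three parts in order, with essentially no new ideas beyond one routine observation for part (1).

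For part (1), I would reduce to Fact~\ref{fac:tt has atomic} via the remark after Definition~\ref{def:omega stable}: since $T$ is countable and $\omega$-stable, it is totally transcendental. The short check to perform is that adding names for the parameters in $A$ preserves total transcendence. Suppose towards a contradiction that there is a binary tree $\langle\varphi_s(\bar x,\bar y_s)\rangle_{s\in 2^{<\omega}}$ of $L(A)$-formulas and tuples $\langle\bar a_s\rangle_{s\in 2^{<\omega}}$ in some model of $T(A)$ witnessing the failure of total transcendence. Writing each $\varphi_s(\bar x,\bar y_s)=\varphi'_s(\bar x,\bar y_s,\bar c_s)$ for some $\bar c_s$ from $A$, and absorbing $\bar c_s$ into the parameter block by replacing $\bar a_s$ with $\bar a_s\concat\bar c_s$, one gets a witnessing configuration for $T$ itself, a contradiction. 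Hence $T(A)$ is totally transcendental, and Fact~\ref{fac:tt has atomic} produces an atomic model.

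For part (2), I would simply cite Theorem~\ref{thm:atomic model MA superstable}, whose hypotheses match exactly: $T$ is countable and superstable, $|A|\le\kappa^{+}$, and the isolated types in $T(A)$ are dense.

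For part (3), my plan is to exhibit Example~\ref{exa:superstable-continuum plus} as the witness. Set $\kappa=\cont$, so $|A|=(\cont)^{+}=\kappa^{+}$, matching the hypothesis of the corollary. By Remark~\ref{rem:Cohen =00003D countable}, $\MA[(\mbox{Cohen})]{\kappa}$ implies $\kappa<\cont$; with $\kappa=\cont$ this forces $\MA[(\mbox{Cohen})]{\kappa}$ to fail in ZFC. The example supplies a countable superstable $T$ and a set $A\subseteq V^{N}$ for which the isolated types in $T(A)$ are dense but $T(A)$ has no atomic model, which is exactly what is claimed. There is no genuine obstacle in the proof; the only point requiring even a moment's thought is the preservation of total transcendence under naming parameters in part (1).
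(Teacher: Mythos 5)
Your proposal is correct and takes essentially the same route as the paper, which derives the corollary with no further argument by citing Fact \ref{fac:tt has atomic} together with the remark after Definition \ref{def:omega stable} for part (1), Theorem \ref{thm:atomic model MA superstable} for part (2), and Example \ref{exa:superstable-continuum plus} for part (3). The one detail the paper leaves implicit --- that total transcendence is preserved when naming the parameters of $A$, which is needed since $T\left(A\right)$ is typically uncountable --- is exactly the point you identify and verify correctly.
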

\begin{problem}
Does Theorem \ref{thm:atomic model MA superstable} hold when $T$
is stable?
\end{problem}

\section{\label{sec:On-linear-orders}On linear orders}

In this section we will try to focus on the particular case when the
structures involved are linear orders. Assume that $L=\left\{ <\right\} $
(where $<$ is a binary relation symbol). Let $P$ be a forcing notion,
and $\tauname$ a $P$-name for an infinite linear order with universe
$\omega$. Again we ask: suppose that $\left(P,L,\tauname\right)$
has the isomorphism property (see Definition \ref{def:Isomorphism property}).
Does it follow that for some linear order $I=\left(X,<\right)\in\Vv$,
$P\forces\check{I}\cong\tauname$? 

In private communications with Zapletal, we were told that he solved
the problem positively for set theoretic trees (i.e., structures of
the form $\left(X,<\right)$ where $<$ is a partial order and the
set below every element is well-ordered). 

In Section \ref{sub:the class K mu} we translate the problem to that
of finding an atomic model in certain classes of theories, contained
in a bigger class $\weakLin{\mu}$. In Section \ref{sub:More-on-Ku+,Ku*}
we will further analyze these classes and give an equivalent definition.
In Section \ref{sub:An-example-of}, we give an example of a theory
in $\weakLin{\mu}$ for $\mu=\left(2^{\aleph_{0}}\right)^{+}$, with
no atomic model.

\subsection{\label{sub:the class K mu}The class $\protect\weakLin{\mu}$}

Let $\collaps{\mu}{\aleph_{0}}$ be the Levy collapse of $\mu$ to
$\aleph_{0}$. Note that as this forcing notion is weakly homogeneous,
for every first order sentence $\psi$ in the language of set theory
with parameters from $\Vv$, $\Vv\left[G_{1}\right]\models\psi$ iff
$\Vv\left[G_{2}\right]\models\psi$ for any two generic sets $G_{1}$
and $G_{2}$. Hence we may write $\Vv^{\collaps{\mu}{\aleph_{0}}}\models\psi$
meaning $\Vv\left[G\right]\models\psi$ for any generic $G$. See
\cite[Proposition 10.19]{MR1994835}. 
\begin{defn}
\label{def:K1}For a cardinal $\mu$, let $L_{\mu}=\left\{ <\right\} \cup\set{P_{i}}{i\in\mu}\cup\set{Q_{j}}{j\in\mu}$
where $P_{i}$ are unary predicates and $Q_{j}$ binary relation symbols.
let $\weakLin{\mu}$ be the class of complete first order theories
$T$ in a language $L_{T}=\left\{ <\right\} \cup\set{P_{i}}{i\in u_{T}}\cup\set{Q_{j}}{j\in v_{T}}\subseteq L_{\mu}$
equipped with a function $f_{T}:v^{2}\to v$ such that:
\begin{itemize}
\item The theory $T$ says that $<$ is a linear order; the sets $P_{i}$
are nonempty and disjoint; the sets $Q_{j}$ are nonempty and disjoint;
$Q_{j}\left(x,y\right)\vdash x<y$; \emph{additivity} of $T$: for
all $x<y<z$, $Q_{j_{1}}\left(x,y\right)\land Q_{j_{2}}\left(y,z\right)\vdash Q_{f_{T}\left(j_{1},j_{2}\right)}\left(x,z\right)$;
the formula $P_{i}\left(x\right)$ isolates a complete type for $i\in u_{T}$;
the formula $Q_{j}\left(x,y\right)$ isolates a complete type for
$j\in v_{T}$; the isolated types are dense in $T$, and for any (equivalently,
some) countable atomic model $M$ of $T$ in $\Vv^{\collaps{\mu}{\aleph_{0}}}$,
the sets $P_{i}^{M}$ and $Q_{j}^{M}$ form a partition of $M$, $<^{M}$
respectively. 
\end{itemize}
\end{defn}
\begin{rem}
\label{rem:some equivalent definitions of K1}Suppose that $T\in\weakLin{\mu}$.
Then for every $n<\omega$, if a formula of the form $\bigwedge_{i<n}Q_{j_{i}}\left(x_{i},x_{i+1}\right)$
is consistent, then it isolates a complete type. Indeed, it is enough
to show that this is true in a countable atomic model $M$ of $T$
in $\Vv^{\collaps{\mu}{\aleph_{0}}}$. So let $\bar{x}=\left(x_{0},\ldots,x_{n}\right)$
and $\bar{y}=\left(y_{0},\ldots,y_{n}\right)$ be finite increasing
tuples from $M$ such that $Q_{j_{i}}^{M}\left(x_{i},x_{i+1}\right)$
and $Q_{j_{i}}^{M}\left(y_{i},y_{i+1}\right)$ hold for all $i<n$.
Then, as $Q_{j_{i}}$ isolate complete types, and as atomic models
are homogeneous, there are automorphisms $f_{i}$ of $M$ which take
$x_{i},x_{i+1}$ to $y_{i},y_{i+1}$. Let 
\[
f=f_{0}\upharpoonright\left(-\infty,x_{1}\right]\cup f_{1}\upharpoonright\left(x_{1},x_{2}\right]\cup\ldots\cup f_{n-1}\upharpoonright\left(x_{n-1},\infty\right).
\]
Then, as the $Q_{j}$'s form a partition of $<^{M}$, it follows that
$f$ is an automorphism of $M$ by the additivity of $T$. It follows
that these types are dense in $T$. 

Now note that an equivalent definition to Definition \ref{def:K1}
can be obtained by replacing the additivity requirement, the requirement
that the isolated types are dense, and the requirement that the $P_{i}$'s
and $Q_{j}$'s form a partition of a countable atomic model (after
the collapse) by asking that formulas of the form $\bigwedge_{i<n}Q_{j_{i}}\left(x_{i},x_{i+1}\right)$,
if consistent, isolate a complete type, and that types of such forms,
as well as $P_{i}\left(x\right)$ are dense in $T$ (we should also
allow formulas of the form $P_{i}\left(x\right)\land x=y$, etc, to
be completely formal).  
\end{rem}
For notational simplicity it is useful to write $Q\left(a,b\right)=j$
for $a\neq b\in M\models T$ when $j\in v_{T}$ is the unique element
such that $\left(a,b\right)\in Q_{j}^{M}$ or $\left(b,a\right)\in Q_{j}^{M}$,
and similarly $P\left(a\right)=i$ if $a\in P_{i}^{M}$.

We define two special subclasses of $\weakLin{\mu}$.
\begin{defn}
\label{def:Kmu*, K_mu+}Let $\fullLin{\mu}$ be the class of theories
$T\in\weakLin{\mu}$ such that for any $j\in v_{T}$ there is an $L_{\mu^{+},\omega}$-formula
$\psi_{j}\left(x,y\right)$ in the language $\set{P_{i}}{i\in u_{T}}\cup\left\{ <\right\} $
such that for any countable atomic model $M$ of $T$ in $\Vv^{\collaps{\mu}{,\aleph_{0}}}$,
$\psi_{j}^{M}=Q_{j}^{M}$. (After the collapse, $\psi_{j}$ is an
$L_{\omega_{1},\omega}$-formula.)

Let $\interLin{\mu}$ be the class of theories $T\in\weakLin{\mu}$
such that for any $j\in v_{T}$ there is an $L_{\mu^{+},\omega}$-formula
$\psi_{j}\left(x,y\right)$ in the language $\left\{ <\right\} $
such that for any countable atomic model $M$ of $T$ in $\Vv^{\collaps{\mu}{,\aleph_{0}}}$,
$\psi_{j}^{M}=Q_{j}^{M}$. 
\end{defn}
As usual, this definition does not depend on the choice of atomic
model or the generic extension. Note that $\weakLin{\mu}\supseteq\fullLin{\mu}\supseteq\interLin{\mu}$. 
\begin{rem}
\label{rem:in Kmu*, P_i are also formulas}If $T\in\interLin{\mu}$,
then for every $i\in u_{T}$, there is a formula $\varphi_{i}$ in
$L_{\mu^{+},\omega}$ such that in any countable atomic model $M$
of $T$ in $\Vv^{\collaps{\mu}{,\aleph_{0}}}$, $\varphi_{i}^{M}=P_{j}^{M}$.
Why? Suppose that $M$ is a countable atomic model of $T$ in $\Vv^{\collaps{\mu}{\aleph_{0}}}$.
Let $a\in P_{i}^{M}$, and let $\ScottPsi_{a}\left(x\right)$ be the
Scott formula which isolates the complete $L_{\omega_{1},\omega}$-type
of $a$ in $\left\{ <\right\} $ (in the context of Proposition \ref{prop:Scott},
this is $\phi_{\beta,a}$). As $P_{i}$ isolates a complete type,
this formula does not depend on $a$, so we define $\varphi_{i}=\ScottPsi_{a}$.
In addition, $\varphi_{i}$ does not depend on $M$, and hence by
\ref{cor:product equal},$\varphi_{i}$ is in $\Vv$. By definition,
$P_{i}^{M}\subseteq\varphi_{i}^{M}$ for any such $M$. It remains
to show that if $a\models\varphi_{i}$ in some such $M$, and $a\in P_{i'}^{M}$
then $i'=i$. If not, by choice of $\varphi_{i}$, there is some $b\models\varphi_{i}$
in $P_{i}^{M}$, and suppose that $a<b$. Let $j\in v_{T}$ be such
that $\left(a,b\right)\in Q_{j}^{M}$. Since$a$ and $b$ satisfy
$\varphi_{i}$, there is an automorphism $\sigma$ of $M\upharpoonright\left\{ <\right\} $
taking $a$ to $b$. Then by the definition of $\interLin{\mu}$,
$\left(\sigma\left(a\right),\sigma\left(b\right)\right)\in Q_{j}^{M}$.
However $Q_{j}$ isolates a complete type, so it must be that $Q_{j}\left(x,y\right)\vdash P_{i'}\left(x\right)$
and hence $b=\sigma\left(a\right)\in P_{i'}^{M}$ so $i=i'$. 
\end{rem}
The following theorem translates the question of finding a linear
order with the isomorphism property which is not realized in $\Vv$
to the question of finding a theory in $\fullLin{\mu}$ without an
atomic model, as in Section \ref{sec:Translating-the-question}. It
seems that in order to produce a counterexample, $\fullLin{\mu}$
allows more freedom than $\interLin{\mu}$ (as it contains more theories),
but in the end they are equivalent in this sense. 
\begin{thm}
\label{thm:translation order}Let $\mu$ be some cardinal. The following
are equivalent:
\begin{enumerate}
\item For any forcing notion $P$ such that $P\not\forces\left|\check{\left(\mu^{+}\right)}\right|<\omega_{1}$
 and any $P$-name $\tauname$ such that $P$ forces $\tauname$
to be a linear order on $\omega$, if $\left(P,\left\{ <\right\} ,\tauname\right)$
has the isomorphism property, then for some linear order $I=\left(X,<\right)\in\Vv$,
$P\forces\check{I}\cong\tauname$. 
\item Every theory $T\in\fullLin{\mu}$ has an atomic model of size $\mu$. 
\item Every theory $T\in\interLin{\mu}$ has an atomic model of size $\mu$. 
\end{enumerate}
\end{thm}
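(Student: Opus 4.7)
The plan is to establish the cycle $(2) \Rightarrow (3) \Rightarrow (1) \Rightarrow (2)$. The first implication is immediate from the inclusion $\interLin{\mu} \subseteq \fullLin{\mu}$: any theory satisfying the hypotheses of $\interLin{\mu}$ automatically sits in $\fullLin{\mu}$, so (2) supplies the desired atomic model.

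For $(3) \Rightarrow (1)$, suppose $(P,\{<\},\tauname)$ has the isomorphism property and $P \not\forces |\check{(\mu^+)}| < \omega_1$. By Proposition \ref{prop:Scott sentence in V} the Scott sentence $\Psi \in \Vv$ of $\tauname$ is an $L_{\infty,\omega}$-sentence; the hypothesis on $P$ yields $\omega_1^{\Vv[G]} \le \mu^+$, so every sub-formula of $\Psi$ lies in $L_{\mu^+,\omega}$ in $\Vv$. I would enumerate the Scott sub-formulas $\phi_{\beta,\bar a}$ of Proposition \ref{prop:Scott} defining the orbits of the automorphism group of $\tauname[G]$ on singletons and on increasing pairs as $\{\varphi_i : i \in u\}$ and $\{\psi_j : j \in v\}$, with $|u|,|v|\le\mu$, and build a complete $L_T$-theory $T$ axiomatizing that $<$ is a linear order, $P_i \leftrightarrow \varphi_i$, $Q_j(x,y) \leftrightarrow \psi_j(x,y) \wedge x<y$, together with the first-order theory of the natural $L_T$-expansion of $\tauname[G]$. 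A direct check shows $T \in \interLin{\mu}$, and (3) then provides an atomic model $M$ of $T$ of size $\le \mu$. In $\Vv[G]$, $M$ is a countable atomic model of $T$, hence isomorphic to the canonical $L_T$-expansion of $\tauname[G]$ by Fact \ref{fac:atomic isomorphic}; the reduct $I = M \upharpoonright \{<\} \in \Vv$ satisfies $P \forces \check I \cong \tauname$ as required.

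For $(1) \Rightarrow (2)$, let $T \in \fullLin{\mu}$. Force with $P = \collaps{\mu}{\aleph_0}$ (which does not collapse $\mu^+$) and let $\tauname^*$ name a countable atomic model $N$ of $T$, which exists in $\Vv^P$ by Fact \ref{fac:atomic equivalent} and the density of isolated types. To invoke (1) I must encode the $\{<,P_i\}$-reduct of $N$ as a pure linear order. The plan is to replace each $a \in P_{i(a)}^N$ with a rigid label gadget $L_{i(a)}$ drawn from a family $\{L_i : i \in u_T\} \in \Vv$ of pairwise non-isomorphic, $\{<\}$-definably distinguishable order types separated by distinguished markers (for instance, ordinal blocks of lengths strictly increasing with $i$). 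Let $\tauname^\dagger$ name the resulting dressed linear order on $\omega$. Fact \ref{fac:atomic isomorphic} together with the canonicity of the encoding gives the isomorphism property for $(P,\{<\},\tauname^\dagger)$, so (1) yields $I^\dagger \in \Vv$ with $P \forces \check{I^{\dagger}} \cong \tauname^\dagger$. In $\Vv$, decode the gadgets to recover the underlying linear order $I$ and the $P_i$-labelling, and then expand using the $L_{\mu^+,\omega}$-definitions of the $Q_j$'s from $\{<,P_i\}$ (available since $T \in \fullLin{\mu}$) to produce an $L_T$-structure $M \in \Vv$. In $\Vv[G]$, $M \cong N$, so $M$ is a countable atomic model of $T$; absoluteness of atomicity transfers this to $\Vv$, and size bookkeeping gives $|M| \le \mu$.

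The main obstacle is the design of the gadget family in $(1) \Rightarrow (2)$: the $L_i$'s must be simultaneously pairwise non-isomorphic (so the label $i(a)$ is recoverable in $\Vv$), $\{<\}$-definable within $\tauname^\dagger$ (so decoding happens in $\Vv$), rigid enough not to introduce spurious automorphisms (so the isomorphism property is preserved from $N$), and compatible with the $\fullLin{\mu}$-definability of the $Q_j$'s (so that the expansion reconstructed from $I$ satisfies $T$). Making a concrete choice of gadgets and verifying these four compatibility conditions is where the bulk of the technical effort sits.
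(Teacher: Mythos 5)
Your proposal follows essentially the same route as the paper's proof: (2)$\Rightarrow$(3) by the inclusion $\interLin{\mu}\subseteq\fullLin{\mu}$, (3)$\Rightarrow$(1) via the Scott formulas isolating the orbits on singletons and on increasing pairs (with $|u|,|v|\leq\mu$ coming from the non-collapsing hypothesis), and (1)$\Rightarrow$(2) by dressing the countable atomic model after the Levy collapse as a pure linear order via rigid gadgets. For the one step you leave open, the paper's concrete gadget is $X_{a}=\Qq+\left(P\left(a\right)+2\right)$ --- a dense separator block followed by a well-ordered block of order type $i+2$ recording the color --- and the decoding in $\Vv$ proceeds by absoluteness: isolate the points lacking a densely ordered open neighborhood, read off the order types of the resulting well-ordered convex pieces to recover the $P_{i}$'s, and recover the $Q_{j}$'s from the $L_{\mu^{+},\omega}$-definitions supplied by membership in $\fullLin{\mu}$.
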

\begin{proof}
(1) $\Rightarrow$ (2): Suppose that $T\in\fullLin{\mu}$. Let $P=\collaps{\mu}{\aleph_{0}}$
be the Levy collapse of $\mu$ to $\aleph_{0}$. As the isolated types
are dense in $T$, $P$ forces that $T$ has a countable atomic model,
so let $\dot{M}$ be a name for it. Now let us work in $\Vv^{P}$,
and let $M\in\Vv\left[G\right]$ be a countable atomic model of $T$
with universe $\omega$. 

Let us introduce the following notation. For a family $\set{\left(X_{i},<_{i}\right)}{i\in I}$
of linear orders, where $I$ is linearly ordered by $\prec$, we let
$\sum_{i\in I}X_{i}$ be the linear order whose universe is the disjoint
union of the sets $X_{i}$ and the order $<$ is such that $\mathordi <\upharpoonright X_{i}=\mathordi{<_{i}}$
and for $i\neq j$, $a\in X_{i}$ and $b\in X_{j}$, $a<b$ iff $i\prec j$.

Denote by $\Qq$ the usual dense linear order on the rational numbers.
Define the linear order $X_{M}=\left(X,<\right)$ as the sum of linear
orders $\sum_{a\in M}X_{a}$ where for each $a\in M$, $X_{a}=\Qq+\left(P\left(a\right)+2\right)$
(recall that $P\left(a\right)$ is the unique $i\in u_{T}$ --- now
a countable ordinal --- such that $a\in P_{i}$. It is well defined
because in an atomic model, the $P_{i}$'s form a partition). For
instance, if $a\in P_{0}$, then $X_{a}=\Qq+\left\{ 0,1\right\} $. 

Let $\tauname$ be a $P$-name for $X_{\dot{M}}$. First we claim
that $\left(P,\left\{ <\right\} ,\tauname\right)$ has the isomorphism
property. This follows easily from the fact that after the collapse,
if $M\cong N$ are two countable atomic models of $T$, then $X_{M}\cong X_{N}$
(as linear orders). Now, $P\x P\forces\dot{M}_{1}\cong\dot{M}_{2}$
(where, as usual, $\dot{M}_{1}$ is the $P\x P$-name $\dot{M}\left[G_{1}\right]$
where $G_{1}\x G_{2}$ is a generic for $P\x P$, etc.), so in $\Vv^{P\x P}$,
$\tauname_{1}\cong\tauname_{2}$ as desired. 

By (1), there is some linear order $I=\left(X,<\right)\in\Vv$ such
that $P\forces\check{I}\cong\tauname$. Now we want to recover an
atomic model of $T$ from $I$. 

Let $X_{0}\subseteq X$ be the set of all $x\in X$ with a densely
ordered open neighborhood without endpoints, and let $X_{1}=X\backslash X_{0}$.
For each $x\in X_{1}$, there are unique $x_{0}=x_{0}\left(x\right)\leq x\leq x_{1}\left(x\right)=x_{1}$
such that:
\begin{itemize}
\item $x_{0}<x_{1}$; $x_{0},x_{1}\in X_{1}$; every point in $\left[x_{0},x_{1}\right)$
has a successor; for every $z<x_{0}$ there is some $y\in\left(z,x_{0}\right)\cap X_{0}$,
and similarly, for every $x>x_{1}$ there is some $y\in\left(x_{1},x\right)\cap X_{0}$.
\end{itemize}
Why? This sentence is absolute and since it is true after the collapse,
it is also true in $\Vv$. We now know that after the collapse, the
closed interval $\left[x_{0},x_{1}\right]$ is well-ordered and has
the same order type as $i+2$ for some countable ordinal $i\in u_{T}$,
so in $\Vv$ the same is true (except that now $i<\mu^{+}$). 

There is a natural convex equivalence relation $\sim$ on $X_{1}$:
two points are equivalent if they define the same $x_{0}$ and $x_{1}$.
Let $Y=X_{1}/\sim$, and define an $L_{T}$-structure $N$ with universe
$Y$ such that 
\[
P_{i}^{N}=\set{\left[x\right]_{\sim}\in Y}{\otp\left[x_{0}\left(x\right),x_{1}\left(x\right)\right]=i+2},
\]
and such that $Q_{j}^{N}$ is defined using the $L_{\mu^{+},\omega}$-formula
promised in Definition \ref{def:Kmu*, K_mu+}. 

Now, by absoluteness of the construction, $N$ is also the result
of this construction in $\Vv^{P}$, and there, as $I\cong X_{M}$,
we get that $M\upharpoonright\left\{ <\right\} \cup\set{P_{i}}{i\in u_{T}}$
is isomorphic to $N\upharpoonright\left\{ <\right\} \cup\set{P_{i}}{i\in u_{T}}$,
and hence $N\cong M$ and we are done. 

(2) $\Rightarrow$ (3): Immediate as $\interLin{\mu}\subseteq\fullLin{\mu}$. 

(3) $\Rightarrow$ (1): Suppose that $\left(P,\left\{ <\right\} ,\tauname\right)$
has the isomorphism property. Let $G$ be a generic set for $P$,
and let $M=\tauname\left[G\right]$. For each $a\in M$, let $\ScottPsi_{a}\left(x\right)$
be as in Remark \ref{rem:in Kmu*, P_i are also formulas} and similarly
define $\ScottPsi_{a,b}\left(x,y\right)$ for $a<b$ in $M$. Let
${\cal U}=\set{\ScottPsi_{a}}{a\in M}$ and let ${\cal V}=\set{\ScottPsi_{a,b}}{a<b\in M}$.
By Corollary \ref{cor:product equal}, ${\cal U}$ and ${\cal V}$
are in $\Vv$ (although here they are sets of $L_{\mu^{+},\omega}$-formulas)
and their size is at most $\mu$ by the assumption on $P$. Enumerate
(in $\Vv$) ${\cal U}=\set{\psi_{i}}{i\in u}$ and ${\cal V}=\set{\xi_{j}}{j\in v}$
where $u,v$ are subsets of $\mu$. 

Let $L=\left\{ <\right\} \cup\set{P_{i}}{i\in u}\cup\set{Q_{j}}{j\in v}$.
Let $M'$ be an expansion of $M$ to $L$ defined by $P_{i}^{M'}=\set{x\in M}{x\models\psi_{i}}$
and $Q_{j}^{M'}=\set{\left(x,y\right)\in\mathordi{<^{M}}}{\left(x,y\right)\models\xi_{j}}$.
Note that $M'$ is atomic and that formulas of the form $\bigwedge_{i<n}Q_{j_{i}}\left(x_{i},x_{i+1}\right)$,
if consistent, isolate a complete type, and the same is true for formulas
of the form $P_{i}\left(x\right)$ (this follows from the fact that
any automorphism of $M$ is also an automorphism of $M'$). 

Let $T=Th\left(M'\right)$. By Corollary \ref{cor:product equal},
$T\in\Vv$. We claim that $T\in\interLin{\mu}$. By Definition \ref{def:K1}
and Remark \ref{rem:some equivalent definitions of K1}, $T\in\weakLin{\mu}$
and $T\in\interLin{\mu}$ as witnessed by $\xi_{j}$. By (3), $T$
has an atomic model $N'\in\Vv$. Hence in $\Vv\left[G\right]$, $N'\cong M'$.
It follows that $N\upharpoonright\left\{ <\right\} $ is isomorphic
to $M$ in $\Vv\left[G\right]$.
\end{proof}

\subsection{\label{sub:More-on-Ku+,Ku*}More on $\protect\interLin{\mu}$ and
$\protect\fullLin{\mu}$}
\begin{defn}
\label{def:the equivalence relation}Suppose that $T\in\weakLin{\mu}$.
Suppose that $E$ is some equivalence relation on $v_{T}$. By induction
on $\alpha<\mu^{+}$, define equivalence relations $\eqT E{\alpha}$
as follows:

For $\alpha=0$, $\eqT E0=E$. 

For $\alpha$ limit, $j_{1}\eqT E{\alpha}j_{2}$ iff for all $\beta<\alpha$,
$j_{1}\eqT E{\beta}j_{2}$.

For $\alpha=\beta+1$, $j_{1}\eqT E{\alpha}j_{2}$ iff $j_{1}\eqT E{\beta}j_{2}$
and there exists a countable atomic model $M\in\Vv^{\collaps{\mu}{\aleph_{0}}}$
of $T$ such that:
\begin{itemize}
\item For any $\left(a,b\right)\in Q_{j_{1}}^{M}$, $\left(a',b'\right)\in Q_{j_{2}}^{M}$,
and for any $c\in M\backslash\left\{ a,b\right\} $ there is some
$c'\in M\backslash\left\{ a',b'\right\} $ such that $abc$ and $a'b'c'$
have the same order type, and $Q\left(a,c\right)\eqT E{\beta}Q\left(a',c'\right)$
and $Q\left(c,b\right)\eqT E{\beta}Q\left(c',b'\right)$. 
\item The same, replacing $j_{1}$ with $j_{2}$. 
\end{itemize}
\end{defn}
Note that reflexivity of $\eqT E{\alpha}$ follows from the fact that
$Q_{j}\left(x,y\right)$ isolates a complete type in $T$ (and from
the fact that a countable atomic model is homogeneous).

Let us say that an equivalence relation $E$ on $v_{T}$ is \emph{additive}
if for all $j_{1},j_{2}$ and $j_{1}',j_{2}'$ in $v_{T}$ such that
$Q_{j_{1}}\left(x,y\right)\land Q_{j_{2}}\left(y,z\right)$ and $Q_{j_{1}'}\left(x,y\right)\land Q_{j_{2}'}\left(y,z\right)$
are both consistent, $j_{1}\mathrela Ej_{1}'\land j_{2}\mathrela Ej_{2}'\Rightarrow f_{T}\left(j_{1},j_{2}\right)\mathrela Ef_{T}\left(j_{1}',j_{2}'\right)$.
Say that $E$ is \emph{edge preserving} if whenever $x<y,x'<y'$ and
$Q\left(x,y\right)\mathrela EQ\left(x',y'\right)$ then $\tp\left(x/\emptyset\right)=\tp\left(x'/\emptyset\right)$
and $\tp\left(y/\emptyset\right)=\tp\left(y'/\emptyset\right)$. 
\begin{claim}
\label{claim:some properties of EqT}Suppose that $E$ is some equivalence
relation on $v_{T}$, where $T\in\weakLin{\mu}$.
\begin{enumerate}
\item For all $\alpha<\mu^{+}$, $\mathordi{\eqT E{\alpha+1}}\subseteq\mathordi{\eqT E{\alpha}}$.
Hence for some $\alpha=\alpha\left(T,E\right)$, $\mathordi{\eqT E{\alpha+1}}=\mathordi{\eqT E{\alpha}}$
. 
\item Definition \ref{def:the equivalence relation} does not depend on
the choice of a generic set $G$ for $\collaps{\mu}{\aleph_{0}}$
because $\collaps{\mu}{\aleph_{0}}$ is weakly homogeneous (see above).%

\item If $E$ is additive, then so is $\eqT{\alpha}E$ for any $\alpha<\mu^{+}$. 
\item In Definition \ref{def:the equivalence relation}, we can do any of
the following changes and get an equivalent definition:

\begin{enumerate}
\item Replace ``there exists a countable atomic model'' (which exists
since the isolated types are dense), by ``for any countable atomic
model'' (as any two are isomorphic). 
\item Replace the bullets by ``for some $\left(a,b\right)\in Q_{j_{1}}^{M}$,
$\left(a',b'\right)\in Q_{j_{2}}^{M}$ the following holds. For any
$c\in M\backslash\left\{ a,b\right\} $, there is some $c'\in M\backslash\left\{ a',b'\right\} $
such that $abc$ and $a'b'c'$ have the same order type, $Q\left(a,c\right)\eqT E{\beta}Q\left(a',c'\right)$
and $Q\left(c,b\right)\eqT E{\beta}Q\left(c',b'\right)$ and vice
versa.'' 
\item If $E$ is edge preserving and additive, then we can replace $M\backslash\left\{ a,b\right\} $
and $M\backslash\left\{ a',b'\right\} $ by the intervals $\left(a,b\right)$
and $\left(a',b'\right)$ in the bullets.
\end{enumerate}
\end{enumerate}
\end{claim}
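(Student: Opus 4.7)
The plan is to handle (1) and (2) by bookkeeping, (3) by induction with a case split, and (4) by invoking homogeneity of atomic models together with the uniqueness of the countable atomic model of a countable complete theory (Fact~\ref{fac:atomic isomorphic}). For (1), induct on $\alpha$ to show $\eqT E{\alpha+1}\subseteq\eqT E{\alpha}$: the successor clause of Definition~\ref{def:the equivalence relation} has $j_{1}\eqT E{\beta}j_{2}$ as its first conjunct, and the limit clause is an intersection. Since each strict decrease in the sequence removes at least one pair from $v_{T}\times v_{T}$, and $|v_{T}\times v_{T}|\leq\mu$, stabilization occurs at some $\alpha<\mu^{+}$. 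For (2), since $\collaps{\mu}{\aleph_{0}}$ is weakly homogeneous and every parameter appearing in the defining condition for $\eqT E{\alpha}$ lies in $\Vv$, the standard weak-homogeneity argument invoked at the start of Section~\ref{sub:the class K mu} yields, by induction on $\alpha$, that $\eqT E{\alpha}$ is generic-independent.

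For (3), I would argue by induction on $\alpha$, with the limit step trivial because an intersection of additive equivalence relations is additive. For the successor step, assume $j_{1}\eqT E{\alpha+1}j_{1}'$ and $j_{2}\eqT E{\alpha+1}j_{2}'$ with both compositions $Q_{j_{1}}\land Q_{j_{2}}$ and $Q_{j_{1}'}\land Q_{j_{2}'}$ consistent, and pick witnesses $a<b<c$ and $a'<b'<c'$ in a countable atomic model $M$ of $T$ in the collapse extension, with the evident $Q_{j}$-labels. I verify the successor clause for $(a,c)\in Q_{f_{T}(j_{1},j_{2})}^{M}$ versus $(a',c')\in Q_{f_{T}(j_{1}',j_{2}')}^{M}$ by case analysis on the position of an arbitrary $d\in M\setminus\{a,c\}$ relative to $a,b,c$. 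For example, when $d<a$, apply the hypothesis $j_{1}\eqT E{\alpha+1}j_{1}'$ to the pair $(a,b),(a',b')$ and to $d$ to produce $d'<a'$ with $Q(d,a)\eqT E{\alpha}Q(d',a')$; then $Q(d,c)=f_{T}(Q(d,a),f_{T}(j_{1},j_{2}))$ and $Q(d',c')=f_{T}(Q(d',a'),f_{T}(j_{1}',j_{2}'))$, so inductive additivity of $\eqT E{\alpha}$, combined with $f_{T}(j_{1},j_{2})\eqT E{\alpha}f_{T}(j_{1}',j_{2}')$ (also by the inductive hypothesis on $\alpha$), yields $Q(d,c)\eqT E{\alpha}Q(d',c')$. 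The remaining cases ($d\in(a,b)$, $d=b$, $d\in(b,c)$, $d>c$) are symmetric.

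For (4)(a), any two countable atomic models of $T$ are isomorphic by Fact~\ref{fac:atomic isomorphic}, so ``there exists'' and ``for any'' coincide. For (4)(b), homogeneity of a countable atomic model together with the fact that each $Q_{j}(x,y)$ isolates a complete type implies that any two $Q_{j_{1}}^{M}$-pairs and any two $Q_{j_{2}}^{M}$-pairs are conjugate by an automorphism of $M$, so the condition at one choice of $(a,b),(a',b')$ transports via automorphisms to every other choice, collapsing the ``for any'' over pairs to ``for some'' at a single witnessing pair. For (4)(c), assume $E$ is additive and edge preserving; I claim the $d$-condition for $d\notin(a,b)$ is forced by the condition for $d\in(a,b)$: if $d<a$, edge-preservation gives $\tp(a/\emptyset)=\tp(a'/\emptyset)$, so homogeneity produces $d'<a'$ with $\tp(d',a'/\emptyset)=\tp(d,a/\emptyset)$, and in particular $Q(d,a)=Q(d',a')$; then $Q(d,b)=f_{T}(Q(d,a),Q(a,b))$, and additivity of $\eqT E{\alpha}$ (part (3)) yields $Q(d,b)\eqT E{\alpha}Q(d',b')$, with the case $d>b$ symmetric. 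The main obstacle I anticipate is the case analysis in the successor step of (3): the $d'$ produced by the shorter-edge hypothesis must serve as a witness for the composed edge, and to verify this one genuinely needs inductive additivity of $\eqT E{\alpha}$ together with the $f_{T}$-compatibility of concatenation on the right-hand side.
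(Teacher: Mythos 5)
Your proposal is correct and fleshes out exactly the ingredients the paper's own (one-line) proof points to: induction on $\alpha$ plus additivity of $T$ for (3), isolation of the $Q_{j}$'s and homogeneity of the countable atomic model for (4)(b), and (3) together with edge-preservation for (4)(c). The only detail worth making explicit is that in the successor step of (3) you check the back-and-forth condition for a single decomposable pair $(a,c)$, which suffices precisely because of the automorphism-transport argument of (4)(b) (whose proof does not depend on (3), so there is no circularity).
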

\begin{proof}
(1) is clear, (2) is explained in the claim. For (3) use induction
on $\alpha$ and the fact that $T$ is additive. (4) (a) is explained
above, (4) (b) follows by the fact that $Q_{j}$ isolates a complete
type and by homogeneity, (4) (c) is proved by induction on $\alpha$
using (3). 
\end{proof}
As usual, assume that $T\in\weakLin{\mu}$. Given an equivalence relation
$E$ on $v_{T}$, say that $E$ is \emph{definable} in some sub-language
$\left\{ <\right\} \subseteq L'\subseteq L_{T}$ if for every $E$-class
$C\subseteq v_{T}$ there is some $L_{\mu^{+},\aleph_{0}}$-formula
$\psi_{C}$ in $L'$ such that if $M\in\Vv^{\collaps{\mu}{\aleph_{0}}}$
is a countable atomic model of $T$, then $\psi_{C}^{M}=C^{M}=\bigcup\set{Q_{j}^{M}}{j\in C}$.
\begin{prop}
\label{prop:definition of Q_j}Suppose that $T\in\weakLin{\mu}$ and
$E$ is some equivalence relation on $v_{T}$. Assume that $E$ is
definable in some sub-language $\left\{ <\right\} \subseteq L'\subseteq L$.
Then for every $\alpha<\mu^{+}$, $\eqT E{\alpha}$ is also definable
in $L'$. \end{prop}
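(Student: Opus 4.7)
The plan is to prove the claim by transfinite induction on $\alpha<\mu^{+}$. The case $\alpha=0$ is immediate from the hypothesis on $E$. If $\alpha$ is a limit ordinal, then $\eqT{E}{\alpha}=\bigcap_{\beta<\alpha}\eqT{E}{\beta}$, so a given $\eqT{E}{\alpha}$-class $C$ is the intersection of the unique $\eqT{E}{\beta}$-classes $D_{\beta}\supseteq C$ for $\beta<\alpha$. By induction each $D_{\beta}$ is defined in $L'$ by some $\psi_{D_{\beta}}\in L_{\mu^{+},\omega}$, and since $|\alpha|\leq\mu$ the conjunction $\psi_{C}:=\bigwedge_{\beta<\alpha}\psi_{D_{\beta}}$ remains in $L_{\mu^{+},\omega}(L')$ and defines $C^{M}$ in every countable atomic model $M$ of $T$ in the collapse extension.

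The heart of the argument is the successor step $\alpha=\beta+1$. Fix a countable atomic model $M$ of $T$ in $\Vv^{\collaps{\mu}{\aleph_{0}}}$. Since $Q_{j}$ isolates a complete type in $T$ and $M$ is homogeneous, any two pairs $(a,b),(a',b')\in Q_{j}^{M}$ are conjugate by an automorphism of $M$, and such an automorphism preserves every $\eqT{E}{\beta}$-class because these classes are $L'$-definable by the induction hypothesis. Consequently, for each $j\in v_{T}$ the set
\[
S_{j}:=\bigl\{(\epsilon,D_{1},D_{2}):\exists c\in M\setminus\{a,b\}\text{ in position }\epsilon\text{ relative to }(a,b)\text{ with }Q(a,c)\in D_{1},\ Q(c,b)\in D_{2}\bigr\},
\]
in which $\epsilon\in\{<,\mathrm{mid},>\}$ and $D_{1},D_{2}$ range over $\eqT{E}{\beta}$-classes, does not depend on the choice of $(a,b)\in Q_{j}^{M}$; moreover, by weak homogeneity of the L\'{e}vy collapse (cf.\ Claim~\ref{claim:some properties of EqT}(2)) the set $S_{j}$ already lives in $\Vv$. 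Unpacking Definition~\ref{def:the equivalence relation}, $j_{1}\eqT{E}{\alpha}j_{2}$ iff $j_{1}\eqT{E}{\beta}j_{2}$ and $S_{j_{1}}=S_{j_{2}}$, so the $\eqT{E}{\alpha}$-class $C$ of a fixed representative $j_{0}\in C$ equals $\set{j\in D}{S_{j}=S_{j_{0}}}$, where $D$ is the $\eqT{E}{\beta}$-class containing $j_{0}$.

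It remains to write the defining formula. For each $\tau=(\epsilon,D_{1},D_{2})\in S_{j_{0}}$, let $\phi_{\tau}(x,y,z)$ be the obvious formula in $L_{\mu^{+},\omega}(L')$ built from $\psi_{D_{1}}$ and $\psi_{D_{2}}$ asserting that $z$ sits in position $\epsilon$ relative to $(x,y)$ and the two $Q$-pairs among $\{x,y,z\}$ lie in $D_{1}$ and $D_{2}$ respectively. Since $|v_{T}|\leq\mu$ we have $|S_{j_{0}}|\leq\mu$, so
\[
\psi_{C}(x,y):=\psi_{D}(x,y)\wedge\bigwedge_{\tau\in S_{j_{0}}}\exists z\,\phi_{\tau}(x,y,z)\wedge\forall z\,\Bigl(z=x\vee z=y\vee\bigvee_{\tau\in S_{j_{0}}}\phi_{\tau}(x,y,z)\Bigr)
\]
is an $L_{\mu^{+},\omega}$-formula in $L'$. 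In a countable atomic model $M$ of $T$ in the collapse extension, the first conjunct forces $Q(a,b)\in D$ and the last two conjuncts jointly force $S_{Q(a,b)}=S_{j_{0}}$; hence $\psi_{C}^{M}=C^{M}$ as required.

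The main obstacle is the pair of absoluteness properties claimed for $S_{j}$: its independence of the choice of $(a,b)\in Q_{j}^{M}$ (which uses homogeneity of $M$ together with the inductive $L'$-definability of $\eqT{E}{\beta}$-classes), and its membership in $\Vv$ (which uses weak homogeneity of the L\'{e}vy collapse). Once these are in place, the remaining bookkeeping is routine: each level of the induction introduces at most $\mu$ new conjuncts or disjuncts, keeping $\psi_{C}$ safely inside $L_{\mu^{+},\omega}$.
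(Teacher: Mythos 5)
Your proof is correct and follows essentially the same route as the paper's: induction on $\alpha$, intersecting class-defining formulas at limits, and at successors encoding each class by the $\eqT E{\beta}$-class of a representative together with the set of realized triples (order type, $\eqT E{\beta}$-class, $\eqT E{\beta}$-class) — your $S_{j_0}$ is exactly the paper's parameter set $A$, and your $\psi_{C}$ is the same two-sided "every triple in $A$ is realized and every $z$ realizes a triple in $A$" formula. The only difference is that you spell out why $S_{j}$ is independent of the chosen pair and lies in $\Vv$, which the paper asserts without elaboration.
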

\begin{proof}
The proof is by induction on $\alpha<\mu^{+}$. For $\alpha=0$ this
is given. 

Suppose $\alpha>0$ is a limit. Then for each $\eqT E{\alpha}$-class
$C$ there is a sequence $\sequence{D_{\beta}}{\beta<\alpha}$ of
$\eqT E{\beta}$-classes, such that $C=\bigcap_{\beta<\alpha}D_{\beta}$,
and for any countable atomic model $M\in\Vv^{\collaps{\mu}{\aleph_{0}}}$,
$\left(x,y\right)\in C^{M}$ iff $\left(x,y\right)\in D_{\beta}^{M}$
for all $\beta<\alpha$. 

Suppose $\alpha=\beta+1$. Then for each $\eqT E{\alpha}$-class $C$
there is an $\eqT E{\beta}$-class $D$ and a set $A$ of tuples of
the form $\left(r,D_{1},D_{2}\right)$ where $r$ is an order type
of three points, and $D_{1}$, $D_{2}$ are $\eqT E{\beta}$-classes
such that for any countable atomic $M\in\Vv^{\collaps{\mu}{\aleph_{0}}}$,
$\left(x,y\right)\in C^{M}$ iff $\left(x,y\right)\in D^{M}$ and:

\begin{itemize}
\item For every $z\in M\backslash\left\{ x,y\right\} $, there is some triple
$\left(r,D_{1},D_{2}\right)\in A$ such that $\otp\left(xyz\right)=r$,
$\left(x,z\right)\in D_{1}^{M}$ (if $x<z$, otherwise we ask that
$\left(z,x\right)\in D_{1}^{M}$) and $\left(z,y\right)\in D_{2}^{M}$
(if $z<y$, else same as before) and for every triple $\left(p,E_{1},E_{2}\right)\in A$
there is some $w\in M\backslash\left\{ x,y\right\} $ such that $p=\otp\left(xyw\right)$,
$\left(x,w\right)\in E_{1}^{M}$ (if $x<w$, else see above) and $\left(w,y\right)\in E_{2}^{M}$
(if $w<y$, else see above). 
\end{itemize}
Note that these parameters ($D$, $A$) belong to $\Vv$ and depend
only on $C$ and not and the choice of atomic model or generic set.
By induction this can be written in $L_{\mu^{+},\aleph_{0}}$ (as
there are at most $\mu$-many classes) using $L'$ (since $\mathordi <\in L'$).
\end{proof}
Let $T\in\weakLin{\mu}$. Let $E_{0}$ be the following equivalence
relation on $v_{T}$: $j_{1}\mathrela{E_{0}}j_{2}$ iff for some $i_{1},i_{2}\in u_{T}$,
$Q_{j_{1}}\left(x,y\right)\vdash P_{i_{1}}\left(x\right)\land P_{i_{2}}\left(y\right)$
and $Q_{j_{2}}\left(x,y\right)\vdash P_{i_{1}}\left(x\right)\land P_{i_{2}}\left(y\right)$. 

Let $E_{1}$ be the trivial equivalence relation on $v_{T}$, i.e.,
$E_{1}=v_{T}\x v_{T}$. 
\begin{claim}
\label{claim:E0 and E1 are good}Suppose $T\in\weakLin{\mu}$. The
relations $E_{0}$ and $E_{1}$ are definable in the languages $L_{0}=\left\{ <\right\} \cup\set{P_{i}}{i\in u_{T}}$
and $L_{1}=\left\{ <\right\} $ respectively and are additive. The
relation $E_{1}$ is edge preserving. \end{claim}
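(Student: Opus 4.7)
The plan is to verify each of the four assertions in the claim separately by direct unwinding of the definitions. The structural observation driving the proof is that, for $T \in \weakLin{\mu}$, every $Q_j(x, y)$ isolates a complete type in $T$, so there is a canonical assignment $j \mapsto (i(j), i'(j)) \in u_T \times u_T$ with $Q_j(x, y) \vdash P_{i(j)}(x) \land P_{i'(j)}(y)$, and two indices are $E_0$-equivalent precisely when this labelling pair coincides.

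For definability of $E_0$ in $L_0$, I would associate to each $E_0$-class $C = C_{i,i'}$ the first-order formula $\psi_C(x, y) := (x < y) \land P_i(x) \land P_{i'}(y)$. The inclusion $\bigcup_{j \in C} Q_j^M \subseteq \psi_C^M$ in any countable atomic $M \in \Vv^{\collaps{\mu}{\aleph_0}}$ is immediate from the label definition; the reverse direction uses the partition property of the $Q_j$'s on ${<}^M$ to locate, for each $(a, b) \in \psi_C^M$, the unique $j^{*} \in v_T$ with $(a, b) \in Q_{j^*}^M$, and then uses that $Q_{j^*}$ isolates a complete type, forcing $j^* \in C$. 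For $E_1$, the sole class is $v_T$, whose interpretation is $\bigcup_{j \in v_T} Q_j^M = {<}^M$, defined by $x < y$ in $L_1$. Both formulas are first order, hence a fortiori in $L_{\mu^+, \omega}$.

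For additivity, the $E_1$ case is immediate because every pair of indices is $E_1$-equivalent. For $E_0$, suppose $Q_{j_1}(x, y) \land Q_{j_2}(y, z)$ and $Q_{j_1'}(x, y) \land Q_{j_2'}(y, z)$ are consistent with $j_1 \mathrel{E_0} j_1'$ and $j_2 \mathrel{E_0} j_2'$. Consistency forces $i'(j_1) = i(j_2)$ and $i'(j_1') = i(j_2')$ via the $P_i$-partition on the middle element; a direct computation in a countable atomic model then assigns label $(i(j_1), i'(j_2))$ to $f_T(j_1, j_2)$ and $(i(j_1'), i'(j_2'))$ to $f_T(j_1', j_2')$, and these coincide by the $E_0$-hypotheses, yielding $f_T(j_1, j_2) \mathrel{E_0} f_T(j_1', j_2')$.

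The edge-preserving assertion for $E_1$ is the one I expect to require the most care: because $E_1$-equivalence is trivial, the premise is vacuous and the conclusion reduces to the statement that any two pairs $x < y$, $x' < y'$ in a countable atomic model satisfy $\tp(x/\emptyset) = \tp(x'/\emptyset)$ and $\tp(y/\emptyset) = \tp(y'/\emptyset)$. The main obstacle will be reading the statement in the sense intended for its use in Claim \ref{claim:some properties of EqT}(4)(c); I would then verify it by combining the isolation of $P_i$-types, the homogeneity of a countable atomic model, and the observation that $Q_j$ isolating a complete type forces the relevant $P$-labels to be coherent across the whole model.
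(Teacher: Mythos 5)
Your treatment of the first three assertions is correct and is exactly the argument the paper leaves to the reader as ``easy to check'': since the $P_i$'s partition a countable atomic model and each $Q_j$ is nonempty and isolates a complete type, every $j\in v_T$ carries a well-defined label $\left(i\left(j\right),i'\left(j\right)\right)$ with $Q_j\left(x,y\right)\vdash P_{i\left(j\right)}\left(x\right)\land P_{i'\left(j\right)}\left(y\right)$; $E_0$ is equality of labels, the class $C_{i,i'}$ is defined by $x<y\land P_i\left(x\right)\land P_{i'}\left(y\right)$ (your use of the partition of $<^M$ by the $Q_j$'s plus isolation to get the reverse inclusion is the right and necessary step), the unique $E_1$-class is defined by $x<y$, and additivity of $E_0$ follows from the label computation $i\left(f_T\left(j_1,j_2\right)\right)=i\left(j_1\right)$, $i'\left(f_T\left(j_1,j_2\right)\right)=i'\left(j_2\right)$ exactly as you describe.

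On the last assertion your hesitation is warranted, and you should not try to rescue the statement by reinterpretation: as literally written it is false whenever $\left|u_T\right|\geq2$. Since $E_1$ is total, edge preservation for $E_1$ would force any two increasing pairs in a countable atomic model to have equal types coordinatewise, hence (comparing enough pairs) essentially all elements of $M$ to realize the same type over $\emptyset$; but distinct nonempty $P_{i_1}^M,P_{i_2}^M$ each isolate a complete type and are disjoint, so their elements have distinct types. The assertion is evidently a typo for ``$E_0$ is edge preserving,'' which is what the paper's own one-line justification (``follows from the fact that $P_i\left(x\right)$ isolates a complete type'') actually proves, which is what is wanted for Claim \ref{claim:some properties of EqT}(4)(c) (there one moves points outside the interval $\left(a,b\right)$ by an automorphism carrying $a$ to $a'$, and this needs $\tp\left(a\right)=\tp\left(a'\right)$), and which the ingredients you list yield in one line: if $Q\left(x,y\right)\mathrela{E_0}Q\left(x',y'\right)$ then $x,x'$ lie in a common $P_i$ and $y,y'$ in a common $P_{i'}$, and these formulas isolate complete types. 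With that correction your proof is complete and coincides with the paper's.
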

\begin{proof}
The first assertion is easy to check. The second follows from the
fact that $P_{i}\left(x\right)$ isolates a complete type. 
\end{proof}
As $E_{0}\subseteq E_{1}$, it follows that $\mathordi{\eqT{E_{0}}{\alpha}}\subseteq\mathordi{\eqT{E_{1}}{\alpha}}$
for all $\alpha<\mu^{+}$.
\begin{rem}
We can also define ``intermediate'' equivalence relations between
$E_{0}$ and $E_{1}$, taking into account a specific subset $s$
of $u_{T}$, and then define $j_{1}\mathrela{E_{s}}j_{2}$ iff $P\left(a\right)\equiv_{s}P\left(a'\right)$
and $P\left(b\right)\equiv_{s}P\left(b'\right)$ whenever $\left(a,b\right)\in Q_{j_{1}}$
and $\left(a',b'\right)\in Q_{j_{2}}$ where $\equiv_{s}$ is the
equivalence relation on $u_{T}$ whose classes are $\left\{ s\right\} \cup\set{\left\{ i\right\} }{i\in u_{T}\backslash s}$.
In this notation $E_{1}=E_{u_{T}}$ and $E_{0}=E_{\emptyset}$. These
are less important but worth mentioning. \end{rem}
\begin{prop}
\label{prop: equivalent definition of Kmu}Let $\mu$ be a cardinal.
Then $\fullLin{\mu}$ is the class of theories $T\in\weakLin{\mu}$
such that $\eqT{E_{0}}{\alpha\left(T,E_{0}\right)}$ is equality (see
Claim \ref{claim:some properties of EqT} (1)). Similarly, $\interLin{\mu}$
is the class of theories $T\in\weakLin{\mu}$ such that $\eqT{E_{1}}{\alpha\left(T,E_{1}\right)}$
is equality.\end{prop}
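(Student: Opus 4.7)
The plan is to prove both equivalences by a uniform back-and-forth analysis relating the stabilized equivalence relations to $L_\iota$-automorphisms of countable atomic models. For $\iota \in \{0,1\}$, set $L_0 = \{<\} \cup \{P_i : i \in u_T\}$, $L_1 = \{<\}$, and let $E^*_\iota = \eqT{E_\iota}{\alpha(T,E_\iota)}$ denote the stable value of the iteration (which exists by Claim~\ref{claim:some properties of EqT}(1)). The claim becomes: $T \in \fullLin{\mu}$ (resp.\ $\interLin{\mu}$) iff $E^*_0$ (resp.\ $E^*_1$) is the identity on $v_T$.

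For the easy direction ($\Leftarrow$), assume $E^*_\iota$ is the identity. By Claim~\ref{claim:E0 and E1 are good}, $E_\iota$ is definable in $L_\iota$, so Proposition~\ref{prop:definition of Q_j} gives that $E^*_\iota$ is definable in $L_\iota$, with each class picked out by an $L_{\mu^+,\omega}$-formula. Since the classes are singletons $\{j\}$, this produces, for each $j \in v_T$, an $L_{\mu^+,\omega}$-formula $\psi_j(x,y)$ in $L_\iota$ with $\psi_j^M = Q_j^M$ in every countable atomic model $M$ of $T$ in $\Vv^{\collaps{\mu}{\aleph_0}}$, witnessing $T \in \fullLin{\mu}$ (resp.\ $\interLin{\mu}$).

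For the hard direction ($\Rightarrow$), assume $T \in \fullLin{\mu}$ (resp.\ $\interLin{\mu}$) is witnessed by $L_\iota$-formulas $\psi_j$. Work in $\Vv^{\collaps{\mu}{\aleph_0}}$ and fix a countable atomic $M \models T$. Suppose toward contradiction that $j_1 E^*_\iota j_2$ with $j_1 \neq j_2$, and pick $(a,b) \in Q_{j_1}^M$, $(a',b') \in Q_{j_2}^M$. I will construct an $L_\iota$-automorphism $\sigma$ of $M$ with $\sigma(a)=a'$ and $\sigma(b)=b'$. Since $\psi_{j_1}$ is an $L_{\omega_1,\omega}$-formula in $L_\iota$ in the extension, it is preserved by $L_\iota$-automorphisms, so $M \models \psi_{j_1}(a',b')$; but $(a',b') \in Q_{j_2}^M$, and by hypothesis $\psi_{j_1}^M = Q_{j_1}^M$ is disjoint from $Q_{j_2}^M$ --- contradiction.

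To build $\sigma$, I take $\mathcal{F}$ to be the family of finite order-preserving partial bijections $f : \bar{c} \to \bar{c}'$ in $M$ that extend $\{a \mapsto a',\, b \mapsto b'\}$ and satisfy $Q(c_k, c_l) \, E^*_\iota \, Q(c'_k, c'_l)$ for every index pair $k, l$, and show that $\mathcal{F}$ has the back-and-forth property. Given $f \in \mathcal{F}$ and $d \in M$, I locate $d$ in a consecutive interval $(c_k, c_{k+1})$ of $\bar{c}$, or at one of the tails (in which case I use the anchor pair $(a,b) \leftrightarrow (a',b')$ instead), and apply the stabilization identity $E^*_\iota = \eqT{E_\iota}{\alpha(T,E_\iota)+1}$ --- invoking Claim~\ref{claim:some properties of EqT}(4)(a) so that the extension property holds in our fixed $M$ --- to that surrounding pair, obtaining $d'$ of the correct three-point order type for which the two new neighboring edges have $E^*_\iota$-equivalent $Q$-values. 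The additivity of $E^*_\iota$, inherited from $E_\iota$ by Claims~\ref{claim:some properties of EqT}(3) and \ref{claim:E0 and E1 are good}, then propagates these matches to every other edge $(c_m, d)$ through identities of the form $Q(c_m, d) = f_T(Q(c_m, c_k), Q(c_k, d))$ and its mirror, so $f \cup \{d \mapsto d'\} \in \mathcal{F}$. Since $M$ and $L_\iota$ are both countable in the extension, a standard enumeration argument now produces the desired $L_\iota$-isomorphism $\sigma$; for $\iota = 0$, the inclusion $E^*_0 \subseteq E_0$ guarantees that $E^*_0$-equivalent edges have endpoints in matching $P_i$-sorts, so $\sigma$ respects each predicate $P_i$ as required. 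The main obstacle is precisely this last propagation step --- ensuring that $E^*_\iota$-preservation on a single covering chain of edges forces it on every edge of the tuple --- and additivity of $E^*_\iota$ is exactly what resolves it.
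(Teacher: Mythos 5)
Your proposal is correct and follows essentially the same route as the paper: the direction from ``$\eqT{E_{\iota}}{\alpha\left(T,E_{\iota}\right)}$ is equality'' to membership in $\fullLin{\mu}$ (resp.\ $\interLin{\mu}$) is exactly the paper's appeal to Proposition \ref{prop:definition of Q_j} applied to the singleton classes, and the converse is the paper's back-and-forth system of finite order-preserving partial maps carrying $\left(a,b\right)$ to $\left(a',b'\right)$ and preserving the stabilized relation on all edges (closed under extension by the stabilization identity plus additivity), which yields an $L_{\iota}$-automorphism contradicting the disjointness of the $Q_{j}$'s via invariance of $\psi_{j_{1}}$. The only cosmetic point is that for a point $d$ lying in a tail of the current domain $\bar{c}$ you should anchor the one-step extension on the extreme edge of $\bar{c}$ rather than on $\left(a,b\right)$, so that $d'$ lands on the correct side of all of $\bar{c}'$; additivity then takes care of the remaining edges exactly as you describe.
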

\begin{proof}
The proofs for $\fullLin{\mu}$ and $\interLin{\mu}$ are similar,
so we will do the $\interLin{\mu}$ case.

Suppose that $T\in\interLin{\mu}$ as witnessed by $\psi_{j}$ for
$j\in v_{T}$. Suppose $j_{1}\eqT{E_{1}}{\alpha\left(T,E_{1}\right)}j_{2}$.
Let $M$ be a countable atomic model of $T$ after the collapse, and
let $\left(a,b\right)\in Q_{j_{1}}^{M}$, $\left(a',b'\right)\in Q_{j_{2}}^{M}$.
Let $F$ be the set of all finite order-preserving partial functions
$g$ from $M$ to $M$ which map $a,b$ to $a',b'$ and such that
for $x<y$ in its domain, $Q\left(x,y\right)\eqT{E_{1}}{\alpha\left(T,E_{1}\right)}Q\left(g\left(x\right),g\left(y\right)\right)$.
Then $F$ is a back and forth system by the choice of $\alpha\left(T,E_{1}\right)$
and by the additivity of $\eqT{E_{1}}{\alpha}$ (see Claim \ref{claim:some properties of EqT}
(3)) and hence there is an automorphism of $M\upharpoonright\left\{ <\right\} $
taking $a,b$ to $a',b'$, so both $\left(a,b\right)$ and $\left(a',b'\right)$
satisfy $\psi_{j_{1}}$ and $\psi_{j_{2}}$. But the $Q_{j}$'s are
disjoint, and $\psi_{j}$ defines $Q_{j}$ in $M$, so $j_{1}=j_{2}$. 

Conversely, by Proposition \ref{prop:definition of Q_j}, for every
$j\in v_{T}$, there is some formula $\psi_{j}$ in $L_{\mu^{+},\omega}$
which defines (in the sense of said proposition) the class of $j$
in $E_{1,\alpha\left(T,E_{1}\right)}$ which is just $\left\{ j\right\} $. 
\end{proof}

\subsection{\label{sub:An-example-of}An example of a theory in $\protect\weakLin{\mu}$
without an atomic model}
\begin{thm}
Let $\mu=\left(2^{\aleph_{0}}\right)^{+}$. There is some $T\in\weakLin{\mu}$
without an atomic model.\end{thm}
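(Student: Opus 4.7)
The plan is to transport Example \ref{exa:superstable-continuum plus} into the linear-order framework of $\weakLin{\mu}$, so that the distinguished set $A$ of size $\mu$ appearing there is absorbed into the language as a family of unary predicates $\{P_{\alpha}:\alpha<\mu\}$, while the descending family of equivalence relations $E_{n}$ that blocks atomicity is encoded by countably many binary colors $Q_{n}$ sitting inside $<$.

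Concretely, I would take $L_{T}=\{<\}\cup\{P_{\alpha}:\alpha<\mu\}\cup\{P_{\infty}\}\cup\{Q_{n}:n<\omega\}$ (plus a few auxiliary $Q$-symbols distinguishing the possible order-types between named and generic points). A countable atomic model, obtained after collapsing $\mu$ to $\aleph_{0}$, should consist of $\mu$-many named singletons $P_{\alpha}$ densely interleaved in the order, together with a ``generic'' sort $P_{\infty}$ whose points are parameterised by the branches of the binary tree $2^{<\omega}$; for two generic points $a<b$ the color $Q_{n}(a,b)$ records the unique level at which the branches representing $a$ and $b$ diverge. The additivity function is $f_{T}(n,m)=\min(n,m)$, which is exactly the identity ``split level of $(x,z)$ equals the minimum of the split levels of $(x,y)$ and $(y,z)$''.

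Verifying $T\in\weakLin{\mu}$ is then routine. The theory forces $<$ to be dense; the partition of $<^{M}$ by the $Q_{n}$'s and of $M$ by the $P$'s is built into the axioms; additivity under $\min$ holds by construction; and density of isolated types is proved exactly as in Claim \ref{claim:The-isolated-types are dense}, by taking any consistent formula $\varphi(\bar{x})$ involving finitely many named parameters and refining it to a formula that fixes the pairwise split-levels $Q_{n_{ij}}$ of generic variables at a depth large enough to render $\bar{x}$ quantifier-free determined.

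The heart of the argument, and the main obstacle, is to show $T$ has no atomic model in $\Vv$. If $N\models T$ were atomic, then the $\mu$-many isolated $1$-types over $\emptyset$ of the form ``a generic point standing in a prescribed position relative to $P_{\alpha}$'' would all be realised, forcing $|P_{\infty}^{N}|\geq\mu>2^{\aleph_{0}}$; but a generic element is parameterised, up to pairwise $Q_{n}$-compatibility, by a branch of $2^{\omega}$, so by pigeonhole there exist distinct $a,b\in P_{\infty}^{N}$ with $\neg Q_{n}(a,b)$ for every $n<\omega$. The type $\tp(a,b/\emptyset)$ then contains the non-isolable conjunction $\bigwedge_{n}\neg Q_{n}(x,y)$, contradicting atomicity. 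The delicate technical point I expect to have to calibrate carefully is exactly the quantitative tension between the number of branches indexing generic types (which must be kept at $2^{\aleph_{0}}$) and the number of $P_{\alpha}$-induced types the generic sort is forced to realise (which must exceed $2^{\aleph_{0}}$), mirroring the $|A|\geq(2^{\aleph_{0}})^{+}$ versus ``$2^{\aleph_{0}}$ many $(E_{n})$-branches'' discrepancy driving the pigeonhole in Example \ref{exa:superstable-continuum plus}.
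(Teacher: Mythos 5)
Your overall strategy -- build a theory in $\weakLin{\mu}$ whose models are forced to be large while the binary ``colors'' can only distinguish continuum many patterns per point, in the spirit of Example \ref{exa:superstable-continuum plus} -- is the right one, but the proposal has two genuine gaps, one of which you yourself flag as uncalibrated, and it sits exactly at the crux.

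First, the structure of the colors. Membership in $\weakLin{\mu}$ requires that the $Q_{j}$'s partition $<^{M}$ in the countable atomic model, that each $Q_{j}\left(x,y\right)$ isolate a complete $2$-type, and that there be a total additivity function $f_{T}:v_{T}^{2}\to v_{T}$. Your $f_{T}\left(n,m\right)=\min\left(n,m\right)$ is not well defined at $n=m$ unless branching is exactly binary (for wider branching the split level of $\left(x,z\right)$ is not determined by the two inner split levels when they coincide), and the colors on pairs involving the named points $P_{\alpha}$ -- which are $\mu$-many and must also be additive with the $Q_{n}$'s and isolate complete types -- are deferred to ``a few auxiliary symbols.'' This is where the actual work lies. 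The paper's device is to index the colors by triples $\left(i_{1},i_{2},\left(n,q\right)\right)$ with $q\in\Qq^{+}$ recording the gap at the split level of a $\Qq$-branching tree; then $f_{T}$ is $\min$ on unequal levels and \emph{addition of the gaps} at equal levels, which makes additivity literally true for all pairs of colors and simultaneously builds in the inequality $q+q\neq q$ that drives the final contradiction.

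Second, and fatally, the claim that $\left|P_{\infty}^{N}\right|>2^{\aleph_{0}}$ in every atomic model $N$. Positions relative to the named points are not $\mu$-many pairwise inconsistent isolated $1$-types: a single generic element occupies a position relative to \emph{all} named points at once, so realizing all these ``types'' does not multiply elements. What dense interleaving forces is only that $P_{\infty}^{N}$ is dense in a linear order of size $\mu$, and a dense linear order of size $\left(2^{\aleph_{0}}\right)^{+}$ can have a dense subset of size $2^{\aleph_{0}}$ (indeed of size $\aleph_{1}$ under CH), so the pigeonhole on the $\leq2^{\aleph_{0}}$ many $\bigcap_{n}E_{n}$-classes need not fire. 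The paper avoids a separate ``generic sort'' altogether: every element lies in some $P_{i}$, so $\left|N\right|\geq\mu$ comes for free from the $\mu$ disjoint nonempty predicates; the coloring is then projected to its countably many $\left(n,q\right)$-values, Erd\"os--Rado applied to the whole model yields a homogeneous triple $x<y<z$, and additivity forces $c\left(x,z\right)=\left(n,2q\right)\neq\left(n,q\right)$ -- a contradiction needing no pigeonhole on a special sort at all. To salvage your version you would have to force $P_{\infty}^{N}$ to have size $\geq\mu$ outright (e.g., by giving each named point a generic immediate successor), verify the isolation and partition clauses for the augmented color set, and only then run the branch-counting pigeonhole; none of this is in the proposal as written.
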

\begin{proof}
Let $\Qq^{+}$ be the set of positive rationals, and let $v_{T}=\set{\left(i_{1},i_{2},\left(n,q\right)\right)}{i_{1},i_{2}\in\mu,n<\omega,q\in\Qq^{+}}$.
Let $L=L_{T}=\left\{ <\right\} \cup\set{P_{i}}{i\in\mu}\cup\set{Q_{j}}{j\in v_{T}}.$ 

Let $P=\collaps{\mu}{\aleph_{0}}$, and let $G$ be a generic set.
Work in $\Vv\left[G\right]$, where $\mu$ is countable. Let $M$
be the following structure. Its universe $X$ is the set of all functions
$\eta$ from $\omega$ to $\Qq$ such that for some $n<\omega$, $\eta\left(m\right)=0$
for all $m\geq n$ (so that $X$ is countable). The order is: $\eta_{1}>\eta_{2}$
iff for $n=\min\set k{\eta_{1}\left(k\right)\neq\eta_{2}\left(k\right)}$,
$\eta_{1}\left(n\right)>\eta_{2}\left(n\right)$. As an order, this
is just a dense linear order with no endpoints. 

Choose $P_{i}^{M}$ in a dense way. More precisely, choose $P_{i}^{M}$
in such a way that each $P_{i}^{M}$ is dense and unbounded from above
and below, and such that the $P_{i}$'s form a partition of $M$.
It is easy to see that this is possible to construct ``by hand'',
or one can see this as the countable atomic model of the model completion
of the theory of linear order and infinitely many colors. 

Now we must define $Q_{i_{1},i_{2},\left(n,q\right)}$ where $i_{1},i_{2}\in\mu$,
$n\in\omega$ and $q\in\Qq^{+}$. Suppose $\eta_{1}<\eta_{2}$, then
$\left(\eta_{1},\eta_{2}\right)\in Q_{i_{1},i_{2},\left(n,q\right)}^{M}$
iff $\eta_{1}\in P_{i_{1}}^{M}$, $\eta_{2}\in P_{i_{2}}^{M}$, $n=\min\set k{\eta_{1}\left(k\right)\neq\eta_{2}\left(k\right)}$
and $q=\eta_{2}\left(n\right)-\eta_{1}\left(n\right)$ (which must
be $>0$). Note that the $Q_{j}$'s form a partition of $<^{M}$. 

As above, write $P^{M}\left(\eta\right)=i$ to denote that $\eta\in P_{i}^{M}$
and similarly $Q^{M}\left(\eta_{1},\eta_{2}\right)=j$. 
\begin{claim}
The theory of $M$ is additive: given $j_{1},j_{2}\in v_{T}$ there
is a unique $j_{3}=f_{Th\left(M\right)}\left(j_{1},j_{2}\right)$
such that $Q_{j_{1}}\left(x,y\right)\land Q_{j_{2}}\left(y,z\right)\vdash Q_{j_{3}}\left(x,z\right)$. \end{claim}
\begin{proof}
[Proof of claim] There are several cases to check. Suppose that $j_{1}=\left(i_{1},i_{2},\left(n,q\right)\right)$,
$j_{2}=\left(k_{1},k_{2},\left(m,r\right)\right)$. Then, if $n>m$,
then $j_{3}=\left(i_{1},k_{2},\left(m,r\right)\right)$, and if $n<m$,
$j_{3}=\left(i_{1},k_{2},\left(n,q\right)\right)$. If $n=m$, then
$j_{3}=\left(i_{1},k_{2},\left(n,q+r\right)\right)$. \end{proof}
\begin{claim}
\label{claim:isomorphism property}Suppose that $N$ is an $L$-structure
(perhaps in some transitive model of set theory containing $\Vv^{P}$)
with universe $X$ such that $<^{N}=<^{M}$, the sets $P_{i}^{N}$
form a partition of $X$ for $i\in\mu$ such that $P_{i}^{N}$ is
dense and unbounded, and $Q_{j}^{N}$ defined in the same way as $Q_{j}^{M}$
for $j\in v_{T}$. Then $M\cong N$. Moreover, $M$ is atomic, and
the formulas of the form $P_{i}\left(x\right)$, $\bigwedge_{i<n}Q_{j_{i}}\left(x_{i},x_{i+1}\right)$
isolate complete types which are dense in $Th\left(M\right)$. \end{claim}
\begin{proof}
[Proof of claim] Note that $Th\left(N\right)$ is also additive,
and that $f_{Th\left(N\right)}=f_{Th\left(M\right)}$. We do a back
and forth argument. Suppose that $g:M\to N$ is a partial finite isomorphism
from some finite subset $s\subseteq X$ to $g\left(s\right)$, and
we are given $\eta\in X$ which we want to add to its domain. Enumerate
$s=\set{\eta_{i}}{i<n}$ where $\eta_{0}<\ldots<\eta_{n-1}$, and
suppose $\eta_{i}<\eta<\eta_{i+1}$ (where $-1\leq i<n$ and $\eta_{-1}=\infty$,
$\eta_{n}=\infty$). Let $g\left(s\right)=\set{\nu_{i}}{i<n}$ where
$\nu_{0}<\ldots<\nu_{n-1}$. By additivity, it is enough to find some
$\nu\in X$ such that $\nu_{i}<\nu<\nu_{i+1}$, $P^{N}\left(\nu\right)=P^{M}\left(\eta\right)$,
$Q^{M}\left(\eta_{i},\eta\right)=Q^{N}\left(\nu_{i},\nu\right)$ and
$Q^{M}\left(\eta,\eta_{i+1}\right)=Q^{N}\left(\nu,\nu_{i+1}\right)$.
This follows easily by observing that for all $i<\mu$ and any $\eta\in X$
and $n<\omega$, there is some $\eta'\in X$ such that $\eta\upharpoonright n=\eta'\upharpoonright n$
and $P\left(\eta'\right)=i$: consider $\eta\upharpoonright n\concat\bar{0}$
(where $\bar{0}$ is just an infinite sequence of zeros), and $\eta\concat\left\langle 1\right\rangle \concat\bar{0}$.
Find $\eta'$ between these two. 

The moreover part follows from the previous paragraph, the fact that
$Q_{j}\left(x,y\right)\vdash P_{i_{1}}\left(x\right)\land P_{i_{2}}\left(y\right)$
for some $i_{1},i_{2}<\mu$ and the additivity.
\end{proof}
Let $\tauname$ be a $P$-name for $M$. By Claim \ref{claim:isomorphism property},
$\left(P,L,\tauname\right)$ has the isomorphism property, and hence
$T=Th\left(M\right)\in\Vv$. We also get that $T\in\weakLin{\mu}$.

\begin{claim}
The theory $T$ has no atomic model in $\Vv$. \end{claim}
\begin{proof}
[Proof of claim] Suppose $N$ is an atomic model of $T$. As $N$
is atomic, by the second claim, $\set{Q_{j}^{N}}{j\in v_{T}}$ partition
$<^{N}$. Define a coloring of increasing pairs, $c:\mathordi{<^{N}}\to\omega\x\Qq$
by $c\left(x,y\right)=\left(n,q\right)$ iff $Q^{N}\left(x,y\right)=\left(i_{1},i_{2},\left(n,q\right)\right)$
for some $i_{1},i_{2}<\mu$. By Erd\"os-Rado, for some infinite set
$A\subseteq N$, and some $n,q$, $c\left(x,y\right)=\left(n,q\right)$
for all $x<y$ in $A$. Since $A$ is infinite, we can find $x<y<z$
in $A$. But $T$ forbids a triple $x<y<z$ with 
\[
\left(n,q\right)=c\left(x,y\right)=c\left(y,z\right)=c\left(x,z\right)
\]
 --- a contradiction. 
\end{proof}
\end{proof}
\begin{problem}
What more can be said about theories from $\weakLin{\mu}$, $\fullLin{\mu}$
and $\interLin{\mu}$? When do they have atomic models? We saw that
for $\mu=\left(2^{\aleph_{0}}\right)^{+}$, there is a theory in $\weakLin{\mu}$
without an atomic model. Is the same true for some $\mu$ and $\fullLin{\mu}$
(equivalently, by Theorem \ref{thm:translation order}, $\interLin{\mu}$)? \end{problem}
\begin{rem}
\label{rem:AfterRelease}After this paper has appeared online, it
came to our attention that a recent paper by Knight, Montalban and
Schweber \cite{Knight2014} deals with similar notions and proves
some similar results, though for different purposes and with different
methods. Their notion of a generically presentable structure is very
close to our isomorphism property (see Definition \ref{def:Isomorphism property}).
There is some overlapping between the two papers (for instance our
Corollary \ref{cor:not-collaps-ok} and their Theorem 3.12). Their
proofs also use Scott sentences, but our approach is different (they
use Fra\"{i}ss\'{e} limits and we use atomic models), and the focuses
of the two papers are completely different. Two other recent  papers,
one by Baldwin, Friedman, Koerwien and Laskowski \cite{BaldwinFriedmanLas},
and another by Larson \cite{Larson2014} have some overlapping with
\cite{Knight2014} (they all give a new proof of a result of Harrington
regarding Vaught's conjecture using different methods). 
\end{rem}
\bibliographystyle{alpha}
\bibliography{common2}
 
\end{document}